\title{DLR equations and rigidity for the ${\rm Sine}$-beta process} 
\author{
\ David Dereudre\,\footnote{Univ. Lille, CNRS, UMR 8524 - Laboratoire Paul Painlev\'e, 
F-59000 Lille, France. \newline Email: \href{mailto:david.dereudre@math.univ-lille1.fr}{\nolinkurl{david.dereudre@univ-lille.fr}}},\quad
\ Adrien Hardy\,\footnote{Univ. Lille, CNRS, UMR 8524, Inria - Laboratoire Paul Painlev\'e, 
F-59000 Lille, France. \newline Email: \href{mailto:adrien.hardy@math.univ-lille1.fr}{\nolinkurl{adrien.hardy@univ-lille.fr}}},\quad
\ Thomas Lebl\'e\,\footnote{Courant Institute of Mathematical Sciences, 251 Mercer Street, New York University,
New York, NY 10012-1110, USA. Email: \href{mailto:thomasleble@gmail.com}{\nolinkurl{thomasleble@gmail.com}}}
,\quad
\ Myl\`ene Ma\"\i da\,\footnote{Univ. Lille, CNRS, UMR 8524 - Laboratoire Paul Painlev\'e, 
F-59000 Lille, France.  \newline Email: \href{mailto:mylene.maida@math.univ-lille1.fr}{\nolinkurl{mylene.maida@univ-lille.fr}}}
}
\setlist{nolistsep}
\DeclareUrlCommand\email{\urlstyle{rm}}
\numberwithin{equation}{section}
\theoremstyle{definition}  
\newtheorem{theorem}{Theorem}[section]
\newtheorem{lemma}[theorem]{Lemma}
\newtheorem{corollary}[theorem]{Corollary}
\newtheorem{proposition}[theorem]{Proposition}
\newtheorem{definition}[theorem]{Definition}
\newtheorem{Remark}[theorem]{Remark}
\newenvironment{remark}{\begin{Remark}\rm}{\end{Remark}}
\newtheorem{Example}[theorem]{Example}
\newcommand{\eq}{\begin{equation}}
\newcommand{\qe}{\end{equation}}
\newcommand{\1}{\mathbf{1}}
\newcommand{\N}{\mathbb{N}}
\newcommand{\Z}{\mathbb{Z}}
\newcommand{\R}{\mathbb{R}}
\newcommand{\T}{\mathbb{T}}
\newcommand{\E}{\mathbb{E}}
\renewcommand{\P}{\mathbb{P}}
\DeclareMathOperator{\dist}{dist}
\newcommand{\bs}{\boldsymbol}
\newcommand{\bv}{\mathbf}
\def \Err{\mathrm{Err}}
\renewcommand{\epsilon}{ \varepsilon}
\renewcommand{\phi}{ \varphi}
\renewcommand{\d}{ {\rm d}}
\renewcommand{\frak}{\mathfrak}
\newcommand{\La}{\Lambda}
\newcommand{\leb}{\mathsf{Leb}}
\newcommand{\e}{\mathrm{e}}
\newcommand{\g}{\gamma}
\newcommand{\B}{\mathbf{B}}
\renewcommand{\div}{\mathrm{div}}
\newcommand{\sineb}{\mathrm{Sine}_{\beta}}
\newcommand{\Fluct}{\mathrm{Fluct}}
\newcommand{\Discr}{\mathrm{Discr}}
\newcommand{\Leb}{\mathsf{Leb}}
\newcommand{\Comp}{\mathsf{Comp}}
\newcommand{\V}{\mathsf{V}}
\newcommand{\Move}{\widetilde{\mathsf{M}}}
\newcommand{\II}{\mathsf{II}}
\newcommand{\Qnbeta}{Q_{\n, \beta}}
\newcommand \Gibbs{\textsf{Gibbs}}
\newcommand \Int{\textsf{Int}}
\newcommand \Ext{\textsf{Ext}}
\newcommand \Ener{\textsf{Energy}}
\newcommand \Intz{\textsf{Int}_0}
\newcommand \Moove{\textsf{Move}}
\newcommand \M{\mathsf{M}}
\newcommand{\per}{\textsf{per}}
\newcommand{\n}{n}
\newcommand{\A}{\mathsf{A}}
\renewcommand{\H}{\mathsf{H}}
\renewcommand{\L}{\mathsf{G}}
\newcommand{\Number}{\mathsf{Number}}
\newcommand{\CP}{\mathsf{C}_P}
\newcommand{\Conf}{\mathsf{Conf}}
\newcommand{\h}{\mathsf{Cost}}
\newcommand{\W}{\mathbb{W}}
\newcommand{\Q}{\mathsf{Q}}
\newcommand{\tCP}{\widetilde{\mathsf{C}}_P}
\newcommand{\Aa}{\mathsf{HasPoints}}
\newcommand{\NoRigid}{\mathsf{CanRmv}}
\newcommand{\create}{\mathsf{create}}
\newcommand{\Eve}{E}
\newcommand{\Dens}{\mathsf{Dens}}
\newcommand{\Test}{\mathsf{Test}}
\newcommand{\Nn}{\mathsf{N}}
\newcommand{\Term}{\mathsf{Term}}
\newcommand{\xn}{\bv x_n}
\def\namedlabel#1#2{\begingroup
    #2%
    \def\@currentlabel{#2}%
    \phantomsection\label{#1}\endgroup
}
\begin{document}
\maketitle

\emph{
This paper is dedicated to the memory of our colleague Hans-Otto Georgii (1944-2017). He played a crucial role in the rigorous development of the theory of Gibbs measures.}
\\

\begin{abstract}
We investigate Sine$_\beta$, the universal point process arising as the thermodynamic limit of the microscopic scale behavior in the bulk of one-dimensional log-gases, or $\beta$-ensembles, at inverse temperature $\beta>0$. We adopt a statistical physics perspective, and give a description of Sine$_\beta$ using the Dobrushin-Lanford-Ruelle (DLR) formalism by proving that it satisfies the DLR equations: the restriction of Sine$_\beta$ to a compact set, conditionally on the exterior configuration, reads as a Gibbs measure given by a finite log-gas in a potential generated by the exterior configuration.   In short, $\sineb$ is a natural infinite Gibbs measure  at inverse temperature $\beta>0$ associated with the logarithmic pair potential interaction. Moreover, we show that Sine$_\beta$ is number-rigid and tolerant in the sense of Ghosh-Peres, i.e. the number, but not the position, of particles lying inside a compact set is a deterministic function of the exterior configuration. Our proof of the rigidity differs from the usual strategy and is robust enough to include more general long range interactions in arbitrary dimension. 
\end{abstract}

\section{Introduction and main results}
\subsection{The log-gas and the Sine-beta process}
The (finite, one-dimensional) log-gas is a random system of $\n$ particles confined on the real line $\R$, or the unit circle $\T$, interacting via a repulsive pair potential given by  the logarithm of the inverse distance between particles. Physically, it represents a statistical gas of identically charged particles living in a one-dimensional environment and interacting according to the laws of two-dimensional electrostatics. For a fixed value of the \textit{inverse temperature} parameter $\beta>0$, the distribution of the $\n$-tuple of particles is  given by the canonical Gibbs measure for this interaction, whose density reads as 
\begin{equation}
\label{loggasinformal}
\frac{\d \,\text{Log-gas}}{\d\, \text{Lebesgue}^{\otimes \n}} \propto \exp\left( - \beta \times \text{Logarithmic interaction energy of the particles} \right),
\end{equation}
where ``$\propto$'' means ``equal up to a multiplicative normalizing constant''.

\paragraph{Random matrices and $\beta$-ensembles.} 
From the statistical physics point of view, the log-gas is interesting because of the singular and long-range nature of its interaction potential.
Another important motivation comes from the link between log-gases and random matrix theory; we refer to \cite{forrester2010log} for an extensive treatment of this connection. There are several models of random $n\times n$ matrices whose random eigenvalues exhibit a joint density of the form,
\begin{multline}
\label{betaens}
\frac{\d \, \text{Eigenvalues}}{\d\, \text{Lebesgue}^{\otimes \n}} \propto \exp\left(- \beta  \sum_{j < l} - \log |x_j - x_l| \right) \prod_{j=1}^n\omega(x_j) 
\\ 
=  \prod_{j< \ell}^n \left|x_j-x_\ell\right|^{\beta}\prod_{j=1}^n\omega(x_j)\, ,
\end{multline}
where $\omega$ is an appropriate weight function supported on (a subset of) $\R$ or $\T$.
\begin{itemize}
\item [$\diamond$]The Gaussian ensembles correspond to $\omega(x)=\e^{-x^2}$. For the specific values $\beta=1$ or $2$ or $4$, one recovers the celebrated orthogonal/unitary/symplectic invariant random matrix ensembles. 
\item [$\diamond$] The Wishart, or Laguerre ensembles correspond to $\omega(x)=x^\alpha\e^{-x}\bs 1_{\R_+}(x)$.
\item [$\diamond$] The Jacobi ensembles (Manova) correspond to $\omega(x)=x^{\alpha_1}(1-x)^{\alpha_2}\bs 1_{[0,1]}(x)$.
\end{itemize}
These three models are random Hermitian matrices with eigenvalues on $\R$. Another example is given by the Circular $\beta$-ensemble, hereafter denoted C$\beta$E, which is a random unitary matrix with eigenvalues on the unit circle $\mathbb T$ and joint law  \eqref{betaens} with $\omega(x)=1$. 

When $\beta=2$, an integrable structure comes into play: the eigenvalues process is a determinantal point process, allowing exact computations for most quantities of interest, such as the correlation functions that one can express in terms of determinants. Similarly, when $\beta=1$ or $4$, some computations are still tractable due to a Pfaffian point process structure, although at the price of more involved formulas. In contrast, the present paper deals with “$\beta$ arbitrary”.

\paragraph{The $\sineb$ process.} 
Under an appropriate scaling within the bulk (i.e. the interior) of the spectrum, chosen so that the typical distance between consecutive points is of order $1$, the large $\n$ limit of the random point process of the eigenvalues exists and is called the $\sineb$ process.  Stated otherwise, $\sineb$ is the \textit{microscopic thermodynamic limit in the bulk} of one-dimensional log-gases, and it is the probability law of a certain random infinite point configuration on $\R$.

The $\sineb$ process is \textit{universal} in the sense it only depends on the inverse temperature $\beta>0$ and not on the initial weight $\omega$, for a large family of weights $\omega$, see \cite{BEY14,BEY12}. It is also “universal in the interaction”, in the sense that it appears when considering pair potential interactions that only have a logarithmic singularity at zero, see \cite{Ven13}. 

In the $\beta = 1, 2, 4$ cases, this limiting process is rather well understood due to the determinantal/Pfaffian structure, see e.g. \cite{DeiftGioev} and references therein. However, in the general $\beta>0$ setting, the mere existence of this limit is a difficult result, which was obtained, together with a rather involved description of the limiting object, in \cite{valko2009continuum} for the Gaussian $\beta$-ensemble and in \cite{killip2009eigenvalue} for the Circular $\beta$-ensemble.  The fact that these two descriptions coincide is checked e.g. in \cite{nakano2014level}. 

These descriptions of $\sineb$ use systems of coupled stochastic differential equations to derive the number of eigenvalues/particles falling in a given interval. This turns out to be tractable enough to study fine properties of the point process, such as to obtain large gap probability estimates \cite{ValkoVirag10}, a Central Limit Theorem \cite{kritchevski2012scaling} and large deviation and maximum deviation estimates \cite{holcomb2015large,holcomb2017overcrowding,HolcombePaquette18} for the number of points in an interval, as well as the Poissonian behavior of  $\sineb$  as $\beta \to 0$  \cite{allez2014sine}. 

More recently, the process has been characterized by \cite{valko2017sine} as the spectrum of a random infinite-dimensional operator. In particular this allows a better understanding on the $\beta$-dependency of the process \cite{ValkoVirag18}. 

However, among particularly relevant features of $\sineb$, its correlation functions, and even the asymptotic behavior of its two-point correlation function, remain unknown for  generic $\beta>0$; see however \cite[Chapter 13]{ForresterCbeta} for special cases. 

The goal of this work is to study  $\sineb$ from a statistical physics perspective, so as to obtain an alternative description as an infinite Gibbs measure, characterized by canonical Dobrushin-Lanford-Ruelle equations. In short, $\sineb$ is the natural infinite Gibbs measure  at inverse temperature $\beta>0$ associated with the logarithmic pair potential interaction. 

\subsection{Sine-beta  as an infinite Gibbs measure}
\subsubsection{Context for the DLR formalism}
For any fixed number $\n$ of particles, the canonical Gibbs measure of the Log-gas mentioned in \eqref{loggasinformal} minimizes the quantity
$$
\beta \times \text{Expected logarithmic energy} + \text{Entropy with respect to Lebesgue$^{\otimes \n}$}
$$
among all probability laws of random $\n$-point configuration. This is a famous variational principle for Gibbs measures, see e.g. \cite[Section 6.9]{friedli2017statistical} for a discussion. On the other hand, in the infinite-volume setting, it is shown in \cite[Corollary 1.2]{Leble:2017mz} that $\sineb$ minimizes a free energy functional of the type
\eq
\label{intuitiveFE}
\beta \times \text{Expected renormalized energy} + \text{Entropy with respect to Poisson},
\qe
among laws of stationary point processes, where $\mathrm{Poisson}$ is the Poisson point process with intensity 1 on $\R$. Here ``renormalized energy'' is a way to define the logarithmic energy of infinite configurations at microscopic scale, see Section \ref{sec:Energy}. It is thus natural to ask whether one can obtain a description of $\sineb$ as an infinite Gibbs measure. In view of \eqref{loggasinformal}, the naive guess would be that
\begin{equation} 
\label{naiveguess}
\frac{\d\,\sineb}{\d\,\text{Poisson}} \propto \exp\left(-\beta \times \text{Renormalized energy}\right),
\end{equation}
which is well-known to be illusory because any stationary process absolutely continuous with respect to a Poisson process is the said Poisson process itself.  The Dobrushin-Lanford-Ruelle (DLR) formalism provides the correct setting to possibly recast \eqref{naiveguess} in a \emph{local} way; we refer e.g. to the book  \cite{Georgiibook} for a general presentation in the lattice case, see  also \cite{dereudre2017introduction} for a pedagogical introduction in the setting of point processes. Informally, we will show that, given any bounded Borel set $\Lambda\subset\R$ and any configuration $\gamma_{\Lambda^c}$ outside of $\Lambda$, the law of the configuration $\gamma_{\Lambda}$ in $\Lambda$ drawn from $\sineb$ knowing the exterior configuration $\gamma_{\Lambda^c}$ can be written as
\begin{multline*}
\frac{\d\,\left( \sineb \text{ in $\Lambda \, \Big| \gamma_{\La^c}$}\right) }{\d\,\text{Lebesgue}^{\otimes N}}(\gamma_{\Lambda}) \\
\propto \exp\left(-\beta \times \text{Logarithmic energy of $\gamma_{\Lambda}$ with itself and with $\gamma_{\Lambda^c}$}\right),
\end{multline*}
where the number of points $N$ of the configuration $\gamma_\La$ is almost surely determined by the exterior configuration $\gamma_{\La^c}$. 

\subsubsection{Terminology and notation}
A point \emph{configuration} on $\R$ is a locally finite subset of $\R$ allowing multiple points. Formally, we identify a configuration $\g$ with an integer valued Radon measure on $\R$ that we still denote $\g$. That is we write
$\g = \sum_{x \in \g} \delta_x$,
and if $f$ is a test function, we let
$\int f \d \g  := \sum_{x \in \g} f(x).$ When $\g\in\Conf(\R)$ is such that $\g(\{x\})\in\{0,1\}$ for every $x\in\R$, we say that $\g$ is a \emph{simple} configuration. 
The space of point configurations $\Conf(\R)$, seen as a subspace of the Radon measures, is equipped with the topology coming by duality with the space of continuous functions $\R\to\R$ with compact support, making $\Conf(\R)$ a Polish space, see e.g. \cite[Section 15.7]{Kallenberg}. It is also the smallest topology that makes the mapping $\gamma\mapsto \gamma(B)$ continuous for every bounded Borel set $B$.
We then endow $\Conf(\R)$ with its Borel $\sigma$-algebra. In the following, by a \emph{point process} we mean a probability measure on simple configurations on $\R$, namely a probability measure on $\Conf(\R)$ such that  $\P(\gamma \text{ is simple})=1$. Let us introduce some additional notation: for any Borel set  $\Lambda\subset\R$,
\begin{itemize}
\item[$\diamond$] the set of all  configurations in $\La$ is denoted by $\Conf(\La)$
\item[$\diamond$] $\gamma\in\Conf(\R) \mapsto \gamma_\La\in\Conf(\La)$ stands for the restriction of a configuration to $\La$
\item[$\diamond$]  $|\gamma|=\gamma(\R)\in\N\cup\{\infty\}$ stands for the number of points of a configuration $\gamma$.
\end{itemize}

\subsubsection{Statement of the results}
The central result of this paper is the next theorem.

\begin{theorem}
\label{theo:DLRmain} 
For any $\beta>0$ and any bounded Borel set $\Lambda\subset\R$ the following holds.
\begin{description}
\item[\namedlabel{Thm1A}{(A)}  Rigidity]
\label{theo:rigidity}
There exists a measurable function $\mathsf{Number}_\La: \Conf(\La^c)\to\N$ such that
$$
N:=|\gamma_\La|= \mathsf{Number}_\La(\gamma_{\La^c})\quad \text{ for } \sineb \text{-almost every (a.e.) }  \gamma.
$$

\item[\namedlabel{Thm1B}{(B)}  Definiteness of the exterior potential]
  For any $x\in\La$ and $\sineb$-a.e. configuration $\gamma$, the following limit exists and is positive,
\begin{equation}
\label{omegxknow}
\omega(x|\gamma_{\La^c}):=\lim_{p\to\infty} \prod_{\substack{u\in\gamma_{\La^c}\\|u|\leq p}}\left|1-\frac xu\right|^{\beta}.
\end{equation}
Moreover, the partition function
\begin{equation}
\label{def:ZgammaLa}
Z(\gamma_{\La^c}):=\int_{\La^N}  \prod_{j< \ell}^N \left|x_j-x_\ell\right|^{\beta}\prod_{j=1}^N\omega(x_j|\gamma_{\La^c}) \prod_{j=1}^N \d x_j
\end{equation}
is finite and positive.

\item[\namedlabel{Thm1C}{(C)}  DLR equations]
 For any bounded measurable function  $f:\Conf(\R)\to \R$,  we have
$$
\E_{\sineb}\big[f\big]
=\int \left[  \int_{\La^N} f(\{x_1,\ldots,x_N\}\cup\gamma_{\La^c})\; \rho_{\La^c}(x_1,\ldots,x_N)   \prod_{j=1}^N \d x_j\,\right]{\sineb}(\d\gamma)
$$
where, with the notation of \eqref{omegxknow}, \eqref{def:ZgammaLa}, we let $\rho_{\La^c}$ be defined by
\eq
\label{def:rho}
\rho_{\La^c}(x_1,\ldots,x_N):= \frac1{Z(\gamma_{\La^c})} \prod_{j< k}^N \left|x_j-x_k\right|^{\beta}\prod_{j=1}^N\omega(x_j|\gamma_{\La^c}).
\qe
\end{description}  
\end{theorem}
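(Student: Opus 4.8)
The backbone is the variational description of $\sineb$ recalled around \eqref{intuitiveFE}--\eqref{naiveguess}: by \cite[Corollary~1.2]{Leble:2017mz}, $\sineb$ minimises the free energy $\beta\mathcal{W}+\Ent(\,\cdot\,|\,\mathrm{Poisson})$ among stationary point processes, $\mathcal{W}$ being the renormalised energy. A complementary input is that $\sineb$ is the thermodynamic limit of the circular ensembles $\mathrm{C}\beta\mathrm{E}$, which are exactly Gibbsian and, being finite, trivially number-rigid. I would prove \ref{Thm1B}, then \ref{Thm1C}, then \ref{Thm1A}, in that order.

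\textbf{Item \ref{Thm1B} (definiteness of the exterior potential).} Being a free-energy minimiser, $\sineb$ has finite average renormalised energy; this (or directly the CLT for linear statistics of $\sineb$, see \cite{kritchevski2012scaling}) forces the centred counting function $t\mapsto\gamma([-t,t])-2t$, and its symmetrised version $D_t:=\gamma_{\La^c}([0,t])-\gamma_{\La^c}([-t,0])$, to grow more slowly than any power of $t$, $\sineb$-almost surely. I would then write the logarithm of the product in \eqref{omegxknow} as $\beta\sum_{|u|\le p}(\log|1-x/u|+x/u)-\beta\sum_{|u|\le p}x/u$. The first family converges absolutely, its summand being $O(u^{-2})$, with $\sum_{u\in\gamma_{\La^c}}u^{-2}<\infty$ by local finiteness and the linear counting bound; the second converges as $p\to\infty$ by Abel summation against $D_t$ (the boundary terms vanish and $\int^\infty |D_t|\,t^{-2}\,\d t<\infty$). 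Hence $\omega(x|\gamma_{\La^c})$ exists, and it is positive because the series converges to a finite value and no factor vanishes, as $x\in\La$ is not a point of $\gamma_{\La^c}$. For $Z(\gamma_{\La^c})$ in \eqref{def:ZgammaLa}, positivity is clear, and finiteness follows because on the bounded set $\La^N$ the Vandermonde factor is bounded while $x\mapsto\omega(x|\gamma_{\La^c})$ is bounded and continuous on $\La$: the finitely many exterior points near $\La$ contribute factors $|u-x|^\beta/|u|^\beta$ that are bounded and, since $\beta>0$, integrable on $\La$, the remaining infinite product being over factors tending to $1$ fast enough.

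\textbf{Item \ref{Thm1C} (DLR equations).} This is the continuum, singular, long-range avatar of the implication ``free-energy minimiser $\Rightarrow$ DLR state'' (Lanford--Ruelle, cf.\ \cite{Georgiibook}). Fix a bounded $\La$ and an arbitrary measurable kernel $\gamma_{\La^c}\mapsto\mu(\cdot\,|\,\gamma_{\La^c})$ on $\Conf(\La)$; perturb $\sineb$ by independently resampling, for $L$ large, the configuration inside each translate $\La+kL$ ($k\in\Z$) according to $\mu$, and stationarise. One checks this competitor is admissible and that its free energy equals that of $\sineb$ plus $\tfrac1L$ times the $\sineb$-expectation of the difference between the conditional free energy of $\mu(\cdot|\gamma_{\La^c})$ and that of the conditional law of $\sineb$ in $\La$, up to errors of smaller order controlled using the quasi-locality of $\mathcal{W}$; this control is the technical core of the step. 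Letting $\mu$ vary, minimality forces the conditional law of $\sineb$ in $\La$ given $\sigma(\gamma_{\La^c})$ to minimise the conditional free energy, whose energy part is $-\sum_{i<j}\log|x_i-x_j|-\beta^{-1}\sum_i\log\omega(x_i|\gamma_{\La^c})$ plus a term depending only on $\gamma_{\La^c}$ and on $N:=|\gamma_\La|$. Minimising at $N$ fixed --- the Gibbs density $\propto\mathrm{e}^{-\beta H}$ minimises energy plus entropy --- identifies the conditional law given $\sigma(\gamma_{\La^c})\vee\sigma(N)$ with the density $\rho_{\La^c}$ of \eqref{def:rho}, hence \ref{Thm1C}, a priori with the law of $N$ given $\gamma_{\La^c}$ a possibly non-degenerate mixture $\sum_N q_N(\gamma_{\La^c})\,\rho_{\La^c}^{(N)}$.

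\textbf{Item \ref{Thm1A} (rigidity) --- the main obstacle.} What remains, and where one must leave the Ghosh--Peres test-function strategy, is to show that the weights $q_N(\gamma_{\La^c})$ reduce for $\sineb$-a.e.\ $\gamma_{\La^c}$ to a single atom, which one then names $\Number_\La(\gamma_{\La^c})$. The mechanism I would make rigorous is \emph{screening (electroneutrality) at infinity}: adding or removing one point in the bounded set $\La$ changes the total charge of $\gamma_\La$ felt through the logarithmic kernel, and the exterior-interaction term of the conditional energy --- the one pairing this net charge with the large-scale charge imbalance of $\gamma_{\La^c}$ --- equals $+\infty$ unless $N$ takes the value $N_*(\gamma_{\La^c})$ dictated by $\gamma_{\La^c}$; since $q_N>0$ requires a finite conditional free energy, the mixture collapses to $\delta_{N_*}$. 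The delicate points are to pin down this term so that the dichotomy is genuine, and to verify it $\sineb$-almost surely --- the scheme being designed so as to extend to Riesz-type long-range interactions in arbitrary dimension. An equivalent, perhaps more transparent route is to transfer the trivial finite-$n$ rigidity of $\mathrm{C}\beta\mathrm{E}$ through the limit: combining the DLR description of $\sineb$ on nested boxes $\La\subset[-R,R]$, $R\to\infty$, with the logarithmic (hence finite) number variance of $\sineb$, one excludes any positive conditional variance of $|\gamma_\La|$ given $\gamma_{\La^c}$.
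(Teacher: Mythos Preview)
Your argument for \ref{Thm1B} is sound and close to the paper's (Lemma~\ref{existsineb}: discrepancy control from finite renormalised energy, then summation by parts). The gaps are in \ref{Thm1C} and \ref{Thm1A}.

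For \ref{Thm1C} you invoke the Lanford--Ruelle implication ``free-energy minimiser $\Rightarrow$ DLR''. The step you flag as ``the technical core'' --- controlling the change of $\mathcal W$ under local resampling via some quasi-locality --- is not a technicality but the whole problem: $\mathcal W$ is defined through global electric fields and a double limit, and the paper in fact lists the equivalence of free-energy minimisers and DLR solutions as an \emph{open question} (Section~1.3, item~(c)). The paper's route to the canonical DLR equations is completely different and does not use the variational characterisation at all: it starts from the exact finite-volume DLR identity for the circular ensemble $Q_{n,\beta}$ (Proposition~\ref{nperiodicDLR}), passes to $\sineb$ via local convergence (Proposition~\ref{prop:sineb}), and controls the truncation of the exterior configuration through discrepancy bounds (Lemmas~\ref{le:move} and~\ref{lem:truncerrorinfinitecase}).

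For \ref{Thm1A}, your electroneutrality heuristic (``wrong $N$ gives infinite conditional energy'') does not match the situation: once \ref{Thm1B} holds, the partition function $Z(\gamma_{\La^c},k)$ in \eqref{DLRZ} is finite and positive for \emph{every} $k\ge 0$, so at the level of the specification no energy barrier separates the sectors and nothing prevents a non-trivial mixture $\sum_k q_k\,\rho_{\La^c}^{(k)}$. The paper's mechanism is entirely different and runs through Campbell measures: the canonical DLR equations force $\CP^{(n)}$ to have density $\e^{-\beta\h}$ with respect to some $\Leb^{\otimes n}\otimes\Q_n$ (Theorem~\ref{representationCPn}); if an ergodic $P$ were \emph{not} rigid, $\Q_n$ would be absolutely continuous with respect to $P$ itself (Theorem~\ref{Theoabsurde}), with a density $\create_n$ obeying a covariance relation under moving points (Lemma~\ref{lem:createbougepoints}); feeding this back and using $g(m)=-\log m\to-\infty$ forces the Ces\`aro mean of a bounded sequence to diverge, a contradiction. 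Rigidity then upgrades the canonical DLR to the full \ref{Thm1C}. Incidentally, contrary to your closing remark, this argument does \emph{not} extend to Riesz interactions $\|x\|^{-s}$; the paper says so explicitly in Section~\ref{Sec:general}.
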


\paragraph{Number-rigidity:} 
Part \ref{Thm1A} of Theorem \ref{theo:DLRmain} states that, conditionally on the configuration outside a given bounded Borel set $\La$, the number of points drawn by $\sineb$ inside $\La$ is deterministic. This property has been recently put forward under the notion of \emph{number-rigidity} of point processes, starting from the pioneering work of \cite{ghosh2017rigidity}. Thus, $\sineb$ is number-rigid for any $\beta>0$.  The notion of rigid processes can be compared to the older notion of (non) \emph{hereditary} processes, see e.g. \cite[Definition 1]{dereudre2017introduction} for a presentation. Roughly speaking, if $P$ is not hereditary, it means that \textit{certain} points cannot be deleted with positive probability, whereas number-rigidity implies that \textit{no} point can be deleted. In particular, although $\sineb$ has finite specific relative entropy with respect to the Poisson point process, they are very different on this aspect - a Poisson process being, of course, far from rigid.

Shortly before the present work was completed, the rigidity property for $\sineb$ has been proven independently by \cite{chhaibi2018rigidity}.
Their proof follows the  strategy introduced by  \cite{ghosh2017rigidity}, namely to show that the variance of linear statistics for a smooth approximation of the characteristic function of a bounded interval can be made arbitrary small. To do so, they use variance estimates for polynomial test functions  that were proven in \cite{JiMa15} for the C$\beta$E, and proceed by approximation. The latter work relies on exact computations involving Jack's special functions which are tied to the specific structure of the Circular ensemble. 

In contrast, our proof for the number-rigidity only involves material from classical statistical physics and seems more flexible since it only relies on a weak form of DLR equations, the so-called \emph{canonical DLR equations}, and Campbell measures arguments. This may be of independent interest to prove number-rigidity for a larger class of point processes, in particular when the two-point correlation functions are not explicit, or simply not asymptotically tractable. To the best of our knowledge, this is the first alternative strategy with respect to the Ghosh-Peres method to prove number-rigidity. In particular, we  prove  the more general result that for large a class of long range interactions on $\R^d$, any solution of the canonical DLR equations is indeed number-rigid, see Theorem~\ref{DLRrigidityGeneral}.

\paragraph{The exterior potential:} The existence of the limit \eqref{omegxknow} is non-trivial and follows from quite subtle cancellations. In fact, it was expected in \cite[Section 12]{Ghosh-Lebowitz16} that it cannot be defined properly leading to the belief that the DLR equations were not reachable in this setting. 

\paragraph{Tolerance:}  There are other notions of rigidity than number-rigidity, such as \emph{barycenter-rigidity} or \emph{super-rigidity}. Barycenter-rigidity states that the barycenter of the configuration inside a domain is a deterministic function of the exterior configuration, and super-rigidity expresses the fact that the interior configuration is completely prescribed by the exterior. On the other hand, the notion of \emph{tolerance}, introduced in \cite{ghosh2017rigidity}, states that, roughly speaking, the number of points is the only rigid quantity prescribed by the exterior. It follows from part \ref{Thm1C} of Theorem \ref{theo:DLRmain} that $\sineb$ is tolerant, which is a new result for general $\beta>0$. More precisely, we have:

\begin{corollary}[$\sineb$ is tolerant] For any bounded Borel set $\La$ and $\sineb$-a.e. point configuration $\gamma$, the law of the particles drawn from $\sineb$ inside $\Lambda$ given the exterior configuration $\gamma_{\Lambda^c}$ is mutually absolutely continuous with respect to the $N$-fold Lebesgue measure, where $N$ is the number of points in $\Lambda$ (which is prescribed, by rigidity).
\end{corollary}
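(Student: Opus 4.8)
The plan is to read the conditional law of $\gamma_\La$ given the exterior straight off part~\ref{Thm1C} of Theorem~\ref{theo:DLRmain}, and then to check that the resulting density with respect to Lebesgue measure is finite and strictly positive almost everywhere, using parts~\ref{Thm1A} and~\ref{Thm1B}. Concretely, write $\mathcal{F}_{\La^c}$ for the $\sigma$-algebra generated by $\gamma\mapsto\gamma_{\La^c}$ and put $N:=\Number_\La(\gamma_{\La^c})$, which by~\ref{Thm1A} is $\mathcal{F}_{\La^c}$-measurable and $\sineb$-a.s.\ equal to $|\gamma_\La|$. Applying the DLR identity of~\ref{Thm1C} to integrands of product form $f(\gamma)=g(\gamma_\La)h(\gamma_{\La^c})$ gives
\[
\E_{\sineb}\big[g(\gamma_\La)\mid\mathcal{F}_{\La^c}\big]=\int_{\La^N}g\big(\{x_1,\dots,x_N\}\big)\,\rho_{\La^c}(x_1,\dots,x_N)\prod_{j=1}^{N}\d x_j \qquad \sineb\text{-a.s.},
\]
and, since $\Conf(\La)$ is Polish, a standard regular-conditional-probability argument (checking the identity on a countable measure-determining family of test functions $g$) upgrades this to the assertion that a version of the conditional distribution of $\gamma_\La$ given $\mathcal{F}_{\La^c}$ is the law $Q_{\gamma_{\La^c}}$ obtained by pushing $\rho_{\La^c}(x_1,\dots,x_N)\prod_j\d x_j$ on $\La^N$ forward through $(x_1,\dots,x_N)\mapsto\sum_j\delta_{x_j}$.

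It then remains to show that, for $\sineb$-a.e.\ $\gamma$, this $Q_{\gamma_{\La^c}}$ is mutually absolutely continuous with the pushforward $\Leb_\La^{(N)}$ of $\Leb^{\otimes N}$ on $\La^N$ under the same symmetrization map (this pushforward, up to normalization, being what the statement calls the ``$N$-fold Lebesgue measure''). One inclusion, $Q_{\gamma_{\La^c}}\ll\Leb_\La^{(N)}$, holds by construction: $Q_{\gamma_{\La^c}}$ has a density proportional to $\rho_{\La^c}$, which is finite Lebesgue-a.e.\ because $\omega(\,\cdot\mid\gamma_{\La^c})$ is a positive real-valued limit by~\ref{Thm1B} (alternatively, finiteness of $Z(\gamma_{\La^c})$ already forces $\rho_{\La^c}<\infty$ a.e.). For the reverse inclusion $\Leb_\La^{(N)}\ll Q_{\gamma_{\La^c}}$ I would check that $\rho_{\La^c}>0$ for $\Leb^{\otimes N}$-a.e.\ $(x_1,\dots,x_N)\in\La^N$; by \eqref{def:rho} together with $Z(\gamma_{\La^c})>0$ this reduces to the positivity of $\prod_{j<k}|x_j-x_k|^{\beta}\prod_{j}\omega(x_j\mid\gamma_{\La^c})$, where the Vandermonde factor is positive off the $\Leb^{\otimes N}$-negligible diagonal and each $\omega(x_j\mid\gamma_{\La^c})$ is positive by~\ref{Thm1B}.

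The only genuinely delicate point --- everything else being a direct transcription of Theorem~\ref{theo:DLRmain} --- is that part~\ref{Thm1B} provides, for \emph{each fixed} $x\in\La$, the positivity and finiteness of $\omega(x\mid\gamma_{\La^c})$ only outside an $x$-dependent $\sineb$-null set. To produce a single good set of configurations I would integrate in $x$ over $\La$ and invoke Fubini's theorem, obtaining a $\sineb$-null set outside of which $\omega(x\mid\gamma_{\La^c})\in(0,\infty)$ for Lebesgue-a.e.\ $x\in\La$; a second application of Fubini on $\La^N$ then yields $\prod_j\omega(x_j\mid\gamma_{\La^c})>0$ for $\Leb^{\otimes N}$-a.e.\ $(x_1,\dots,x_N)$ outside that same null set. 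Combining the two inclusions gives the mutual absolute continuity of $Q_{\gamma_{\La^c}}$ and $\Leb_\La^{(N)}$ for $\sineb$-a.e.\ $\gamma$, which is precisely the corollary.
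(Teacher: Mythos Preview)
Your proof is correct and is precisely the argument the paper has in mind: the corollary is stated without proof, with only the remark that it ``follows from part~\ref{Thm1C} of Theorem~\ref{theo:DLRmain}'', and your proposal is a faithful unpacking of that claim.

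One simplification is available for the point you flag as ``genuinely delicate''. Your Fubini argument is valid, but in fact no Fubini is needed: the proof of part~\ref{Thm2Bbis} in the paper (via Lemma~\ref{existsineb}) shows that the move functions $\M_{\La,\La_p}(\eta,\gamma)$ converge as $p\to\infty$ \emph{uniformly in $\eta$}, for $\sineb$-a.e.\ $\gamma$. The identity~\eqref{linkGibbsbetaens} then yields, on that single $\gamma$-independent-of-$x$ null set, the existence and positivity of $\omega(x\mid\gamma_{\La^c})$ for \emph{every} $x\in\La$ simultaneously. So the quantifiers in the statement of~\ref{Thm1B} can be read as ``for $\sineb$-a.e.\ $\gamma$ and all $x\in\La$'', and the density $\rho_{\La^c}$ is strictly positive off the diagonal for all such~$\gamma$.
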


We also obtain the following result (which will follow from Corollary~\ref{DLRcharge}).
\begin{corollary}
\label{sinebcharge}
For any disjoint bounded Borel sets $B_1,\ldots,B_k\subset\R$ with positive Lebesgue measure and any integers $n_1,\ldots,n_k$,  we have
$$
\sineb\big(|\g_{B_1}|=n_1,\ldots,|\g_{B_k}|=n_k\big)>0.
$$
\end{corollary}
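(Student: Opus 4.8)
The plan is to deduce this from the DLR equations of Theorem~\ref{theo:DLRmain}, applied not to $\La := B_1\cup\dots\cup B_k$ itself but to a suitably enlarged bounded set $\La'$. The reason for the enlargement is that rigidity (part~\ref{Thm1A}) prevents us from prescribing $|\g_\La|$ directly: once the configuration inside a domain is resampled, its cardinality is frozen by the exterior. The fix is to append to $\La$ a ``buffer'' interval that can absorb the surplus points. So, set $n := n_1+\dots+n_k$, choose a bounded interval $B_0$ disjoint from the bounded set $\La$ with $|B_0|\geq n$, and let $\La' := \La\cup B_0$. Since $\sineb$ is translation-invariant with unit intensity, $\E_{\sineb}[|\g_{B_0}|]=|B_0|\geq n$, so the event $\A := \{|\g_{\La'}|\geq n\}\supseteq\{|\g_{B_0}|\geq n\}$ has positive $\sineb$-probability.

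I would then apply part~\ref{Thm1C} of Theorem~\ref{theo:DLRmain} to $\La'$, with test function $\bs 1_E$ where $E := \{|\g_{B_1}|=n_1,\dots,|\g_{B_k}|=n_k\}$. Writing $N' := \mathsf{Number}_{\La'}(\g_{(\La')^c})$, which equals $|\g_{\La'}|$ for $\sineb$-a.e.\ $\g$ by part~\ref{Thm1A}, this reads
\begin{equation*} \sineb(E)=\int\Big[\int_{(\La')^{N'}}\bs 1_E\big(\{x_1,\dots,x_{N'}\}\cup\g_{(\La')^c}\big)\,\rho_{(\La')^c}(x_1,\dots,x_{N'})\prod_{j=1}^{N'}\d x_j\Big]\sineb(\d\g). \end{equation*}
The inner integrand is non-negative, so I would restrict the outer integral to $\A$ and, for each $\g\in\A$ (where $N'\geq n$), restrict the inner integral to the product region $\mathcal D := B_1^{n_1}\times\dots\times B_k^{n_k}\times B_0^{N'-n}\subset(\La')^{N'}$ (an $m$-fold Cartesian power being a single point when $m=0$). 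Since $B_0,B_1,\dots,B_k$ are pairwise disjoint subsets of $\La'$ and $\g_{(\La')^c}$ places no point in $\La'$, for Lebesgue-a.e.\ $(x_1,\dots,x_{N'})\in\mathcal D$ the points are pairwise distinct and the configuration $\{x_1,\dots,x_{N'}\}\cup\g_{(\La')^c}$ has exactly $n_i$ points in each $B_i$; thus $\bs 1_E\equiv1$ on $\mathcal D$ up to a null set.

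What remains, and this is the only substantive step, is to check that $\int_{\mathcal D}\rho_{(\La')^c}(x_1,\dots,x_{N'})\prod_j\d x_j>0$ for $\sineb$-a.e.\ $\g$, which is exactly where part~\ref{Thm1B} of Theorem~\ref{theo:DLRmain} enters. For $\sineb$-a.e.\ $\g$ one has $Z(\g_{(\La')^c})\in(0,\infty)$ and, applying Fubini to the a.e.\ statement in part~\ref{Thm1B}, $\omega(x\,|\,\g_{(\La')^c})>0$ for Lebesgue-a.e.\ $x\in\La'$; together with the positivity of $\prod_{j<\ell}|x_j-x_\ell|^\beta$ off the diagonals this gives $\rho_{(\La')^c}>0$ $\Leb^{\otimes N'}$-a.e.\ on $(\La')^{N'}$. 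Since $\mathcal D$ has positive Lebesgue measure $|B_1|^{n_1}\cdots|B_k|^{n_k}|B_0|^{N'-n}>0$, the inner integral over $\mathcal D$ is positive for a.e.\ $\g$; integrating over the positive-probability event $\A$ yields $\sineb(E)>0$. Thus the entire difficulty sits in Theorem~\ref{theo:DLRmain}\ref{Thm1B}, the finiteness and positivity of the exterior potential and partition function, and the rest is bookkeeping; alternatively one may package the argument through Corollary~\ref{DLRcharge}.
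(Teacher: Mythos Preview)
Your proof is correct and essentially mirrors the paper's own argument (Corollary~\ref{DLRcharge}): enlarge the domain by a buffer region so that with positive probability the total number of interior points is at least $n=\sum_j n_j$, then use the DLR representation and the a.e.\ positivity of $\rho$ to see that the event has positive mass. The only cosmetic differences are that the paper works with the \emph{canonical} DLR equations of Theorem~\ref{theo:DLRweak} (your use of Theorem~\ref{theo:DLRmain} is equivalent once rigidity is known) and phrases the last step as a contradiction (if the probability were zero, the inner integral would vanish for a.e.\ $\gamma$, forcing $|\gamma_\Lambda|<n$ a.s.), whereas you restrict to an explicit product region $\mathcal D$ and argue directly.
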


\paragraph{Relation to previous results:}  As mentioned above, when $\beta=2$ the process  benefits from an integrable structure with explicit correlations functions: it is the determinantal point process associated with the sine kernel. Using this structure, previous to the work of \cite{chhaibi2018rigidity},  \cite{Ghosh15} obtained the rigidity for $\beta=2$. Moreover, parts \ref{Thm1B} and \ref{Thm1C} of the theorem, and thus the tolerance, have been obtained by \cite{bufetov2016conditional} for a class of number-rigid determinantal point processes, including the Sine$_2$ process.

\subsection{Related questions and perspectives}
\paragraph{Fluctuations of smooth linear statistics.}
Let $\varphi:\R\to\R$ be a smooth and compactly supported test function. One may define the fluctuation of $\varphi$ as the random variable 
$$
\Fluct[\varphi](\gamma) := \int \varphi(x) \left( \gamma(\d x) - \d x \right),
$$
and ask for the behavior, as $\ell \to \infty$, of 
\begin{equation}
\label{fluctphil}
\Fluct[\varphi_{\ell}](\gamma) := \int \varphi\left(\frac{x}{\ell}\right) \left( \gamma(\d x) - \d x  \right).
\end{equation}
when the random configuration $\gamma$ has law $\sineb$. Having in mind similar results for $\beta$-ensembles, see e.g. \cite{shchange}, \cite{bekerman2016mesoscopic}, \cite{bekerman2017clt}, and since Theorem \ref{theo:DLRmain}\ref{Thm1C} shows that $\sineb$ is conditionally a $\beta$-ensemble, we could expect the fluctuation in \eqref{fluctphil} to converge in law \textit{without normalization} to a centered Gaussian random variable with standard deviation proportional to the fractional Sobolev $H^{1/2}$ norm of the test function $\varphi$. This Central Limit Theorem is proven in \cite{CLTfluct} for $\varphi$ smooth enough, using Theorem \ref{theo:DLRmain} as a key input.

\paragraph{Uniqueness.}
It is natural to ask the following:
\begin{itemize}
\item[(a)] Is $\sineb$ the only stationary process satisfying the DLR equations?
\end{itemize}
The answer to this question is positive when $\beta = 2$, as a consequence of the work \cite{kuijlaars2017universality} where, for Sine$_2$ a.e $\gamma\in\Conf(\R)$, the asymptotic of the conditional measure $\rho_{\La^c}(x_1,\ldots,x_N)$ has been shown to be Sine$_2$ in the limit where $\La:=[-R,R]$ and $R\to\infty$. We expect the answer to be positive for all values of $\beta$.
\begin{itemize}
\item[(b)] Is $\sineb$ the only minimizer of the free energy functional \eqref{intuitiveFE}? For a rigorous definition of this functional, see \cite{Leble:2017mz}.

\item[(c)] Are minimizers of the free energy the same as the solutions to the DLR equations?
\end{itemize}
 We expect both answers to be positive\footnote{See \cite{erbar2018one} for a positive answer to question (b).} for all $\beta>0$. Indeed, uniqueness of infinite-volume Gibbs measures is usually expected in dimension one, see e.g. \cite[Section 6.5.5]{friedli2017statistical} for such a result (that is not applicable here because our interaction is not short-range). For log-gases in dimension $2$, namely Coulomb gases, or in higher dimension, it might happen however that the uniqueness of minimizers/solutions to DLR equations, even up to symmetries, fails to hold for certain values of $\beta$.

\subsection{Strategy for the proof and plan of the paper}
In order to prove Theorem \ref{theo:DLRmain}, we first prove part \ref{Thm1B}, together with a weaker version of part \ref{Thm1C}, which form the \emph{canonical DLR equations}, where we further condition on the number $|\gamma_\La|$ of particles lying in $\La$. These two results form Theorem \ref{theo:DLRweak}, which is proven in Section~\ref{sec:DLR}. We start from the Circular ensemble, for which a, finite $n$, periodic version of the canonical DLR equations holds, and we perform several approximations. As a technical ingredient, we use \textit{discrepancy estimates}, i.e. controls on the difference
$|\gamma_{\La}| - |\La|$ between the number of points of a typical configuration $\gamma$ in a bounded set $\La$, and the size of $\La$ (in the sense of its Lebesgue measure).

Next, in Section~\ref{sec:rigidity}, we leverage Theorem \ref{theo:DLRweak} in order to obtain the rigidity result of Theorem \ref{theo:DLRmain} part \ref{Thm1A}. In fact, we prove a possibly more general result: Any stationary point process $P$ satisfying the canonical DLR equations is number-rigid, see Theorem~\ref{Theo-rigidity}. The proof of rigidity goes by contradiction: if $P$ were not rigid, then we could deduce from the canonical DLR equations a particular structure for its Campbell measures that turns out to be absurd due to the long range nature of the logarithmic interaction. This result is stated and proven for the logarithmic interaction and in dimension one, but our proof is robust enough to yield that the  same result,  that we state in Theorem~\ref{DLRrigidityGeneral}, holds for more general long range interactions in dimension $d\geq 1$.


\section{Canonical DLR equations}
\label{sec:DLR}
We consider here and prove a weaker version of Theorem \ref{theo:DLRmain}, which we refer to as the \emph{canonical DLR equations}, which involves conditioning on the number of particles lying inside $\La$. The term \textit{canonical} refers to the fact that the number of particles is fixed, as in the \textit{canonical} ensemble of statistical physics or the \textit{canonical} Gibbs measure, in contrast with e.g. a \textit{grand canonical} setting.

\begin{theorem}
\label{theo:DLRweak} 
For any $\beta>0$ and any bounded Borel set $\Lambda\subset\R$ the following holds true.
\begin{description}
\item[\namedlabel{Thm2Bbis}{(B)}  Definiteness of the exterior potential]
For any $x\in\La$ and $\sineb$-a.e.  $\gamma$, the following limit exists and is positive,
\eq
\label{DLRweight}
\omega(x|\gamma_{\La^c}):=\lim_{p\to\infty} \prod_{\substack{u\in\gamma_{\La^c}\\|u|\leq p}}\left|1-\frac xu\right|^{\beta}.
\qe
Moreover, for $\sineb$-a.e.  $\gamma$,  the partition function 
\eq
\label{DLRZ}
Z(\gamma_{\La^c},|\gamma_\La|):=\int_{\La^{|\gamma_\La|}}\prod_{j< \ell}^{|\gamma_\La|} \left|x_j-x_\ell\right|^{\beta}\prod_{j=1}^{|\gamma_\La|}\omega(x_j|\gamma_{\La^c}) \d x_j
\qe
is finite and positive. 
\item[\namedlabel{Thm2Cbis}{(C*)}  Canonical DLR equations]
Let $f : \Conf(\R) \to \R$ be a bounded, measurable function. We have the identity
\begin{multline*}
\E_{\sineb}\big[f\big] =\int \left[  \int f(\{x_1,\ldots,x_{|\gamma_\La|}\}\cup\gamma_{\La^c})\;\rho_{\La^c}(x_1,\ldots,x_{|\gamma_\La|}) \;\prod_{j=1}^{|\gamma_\La|} \d x_j \,\right]{\sineb}(\d\gamma),
\end{multline*}
with $\rho_{\La^c}$ defined similarly to \eqref{def:rho} by 
\eq \label{def:rhoweak}
\rho_{\La^c}(x_1,\ldots,x_{|\gamma_\La|}):= \frac1{Z(\gamma_{\La^c},{|\gamma_\La|})} \prod_{j< k}^{|\gamma_\La|} \left|x_j-x_k\right|^{\beta}\prod_{j=1}^{|\gamma_\La|}\omega(x_j|\gamma_{\La^c}).
\qe
\end{description}  
\end{theorem}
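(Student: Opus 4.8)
The plan is to obtain Theorem~\ref{theo:DLRweak} as a limit of the corresponding exact statement for the Circular $\beta$-ensemble. First I would recall that for the C$\beta$E with $n$ points on the circle $\T$ of perimeter $n$ (scaled so the density is $1$), the joint law has density proportional to $\prod_{j<\ell}|z_j-z_\ell|^\beta$ with respect to Lebesgue$^{\otimes n}$; conditioning on the configuration $\g_{\La^c}$ outside a bounded arc $\La$ and on the number $m:=|\g_\La|$ of interior points, one gets \emph{exactly} a finite log-gas on $\La^m$ with density proportional to $\prod_{j<\ell}^m|x_j-x_\ell|^\beta \prod_{j=1}^m \omega_n(x_j|\g_{\La^c})$, where $\omega_n(x|\g_{\La^c}) = \prod_{u\in\g_{\La^c}}|x-u|^\beta$ is a \emph{finite} product (only finitely many exterior points on the circle). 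This periodic, finite-$n$ canonical DLR identity is the starting point; it holds verbatim and requires no work beyond unwinding the definition of conditional density. The task is then to pass to the limit $n\to\infty$, along the sequence realizing C$\beta$E $\to \sineb$, inside both the weight and the partition function, and to identify the limiting weight with the (conditionally convergent) infinite product in \eqref{DLRweight}.

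The heart of the argument is the convergence of the exterior weight. Writing $\omega_n(x|\g_{\La^c}) = \prod_{u\in\g_{\La^c},\,|u|\le p}|x-u|^\beta \cdot \prod_{u\in\g_{\La^c},\, p<|u|} |x-u|^\beta$, the first factor converges trivially once $\g^{(n)}_{\La^c}\to\g_{\La^c}$ locally; the delicate part is the far-field tail $\prod_{p<|u|}|x-u|^\beta$ together with the normalizing constants coming from the circular geometry. I would expand $\log|x-u| = \log|u| + \log|1-x/u|$ and $\log|1-x/u| = -x/u + O(x^2/u^2)$: the quadratic remainder is absolutely summable, but the linear term $-x\sum_u 1/u$ is only \emph{conditionally} summable, which is exactly why the product in \eqref{omegxknow}/\eqref{DLRweight} must be read with the symmetric truncation $|u|\le p$ and cannot be rearranged (this is the subtlety alluded to after the theorem, and the reason \cite{Ghosh-Lebowitz16} doubted it could be defined). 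The key technical input is a \emph{discrepancy estimate}: a bound on $|\g_{[-p,p]}| - 2p|$ for a $\sineb$-typical $\g$ (with matching bounds uniformly in $n$ for the C$\beta$E), which controls the partial sums $\sum_{|u|\le p} 1/u$ via summation by parts and ensures the symmetric tail products converge and are positive and finite $\sineb$-a.s. The factors of $\log|u|$ and the circle-length normalization should cancel between the numerator weight and the denominator $Z$, since $\rho_{\La^c}$ only involves the \emph{ratio}; I would organize the computation so that these divergent pieces never appear alone.

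Once the weight is under control, for the DLR equation itself I would fix a bounded measurable $f$, ideally first a bounded continuous cylinder function depending on finitely many coordinates, and pass to the limit in the finite-$n$ identity $\E_{\text{C}\beta\text{E}_n}[f] = \int \big[\int f(\{x_1,\dots,x_m\}\cup\g_{\La^c})\,\rho^{(n)}_{\La^c}\,\prod \d x_j\big]\,\text{C}\beta\text{E}_n(\d\g)$. The left side converges by the definition of $\sineb$ as the weak limit. For the right side one needs: (i) tightness/uniform integrability of the inner integral in $\g$; (ii) convergence of $\rho^{(n)}_{\La^c}\to\rho_{\La^c}$ pointwise a.e. on $\La^m$ (from the weight convergence and $Z^{(n)}\to Z$); (iii) control of the possible singularity of $\prod_{j<\ell}|x_j-x_\ell|^\beta$, handled since this is locally integrable on $\La^m$ and $\omega(\cdot|\g_{\La^c})$ is bounded on the compact $\La$; (iv) a dominated-convergence argument justified again by discrepancy bounds (to keep $m=|\g^{(n)}_\La|$ and the weights uniformly controlled). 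A standard monotone-class / density argument then extends the identity from cylinder functions to all bounded measurable $f$, and the positivity and finiteness of $Z(\g_{\La^c},|\g_\La|)$ in \eqref{DLRZ} follow from the positivity of the a.s.\ limiting weight on $\La$. The main obstacle, I expect, is precisely step (ii)--(iv) made uniform in $n$: establishing discrepancy estimates for the C$\beta$E that survive the thermodynamic limit, and using them to tame both the conditionally convergent exterior sum and the fluctuating interior point count simultaneously.
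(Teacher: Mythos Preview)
Your overall strategy---exact canonical DLR for the C$\beta$E, pass to the limit using discrepancy estimates, then monotone class---matches the paper's. But there is a real gap in how you intend to take the limit. Your step (ii) asks for ``$\rho^{(n)}_{\La^c}\to\rho_{\La^c}$ pointwise a.e.\ on $\La^m$,'' and you speak of ``$\gamma^{(n)}_{\La^c}\to\gamma_{\La^c}$ locally.'' There is no fixed configuration to converge along: you only know that C$\beta$E$_n\to\sineb$ in distribution (indeed in the local topology), while the density $\rho^{(n)}_{\La^c}$ is \emph{not} a local functional of $\gamma$---it depends on all of $\gamma_{\La_n\setminus\La}$, a window that grows with $n$. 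Skorokhod-type couplings therefore do not help, and your dominated-convergence scheme (iii)--(iv) has no well-defined limit object to dominate toward.

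The paper resolves this by reorganizing the inner integral as a \emph{Gibbs kernel} $f_{\La,\La_p}^{n-\per}(\gamma)$ built from a ``move function'' $\M_{\La,\La_p}^{n-\per}(\eta,\gamma)$ (the energetic cost of replacing $\gamma_\La$ by $\eta$ as seen from $\La_p\setminus\La$), and by decoupling the truncation window $\La_p$ from the period $n$. One first shows (via discrepancy bounds, essentially your summation-by-parts on $\sum 1/u$) that replacing $\La_n$ by $\La_p$ costs $\varepsilon\|f\|_\infty$ uniformly in $n$; then one swaps the periodic interaction $g_n$ for $g$ at cost $O(p^3/n)$ per pair (a step you omit---your $\omega_n$ should involve $|2\sin(\pi(x-u)/n)|^\beta$, not $|x-u|^\beta$); after these two moves, the inner integral $f_{\La,\La_p}$ is a \emph{local} functional, so the distributional convergence C$\beta$E$_n\to\sineb$ applies directly; finally one sends $p\to\infty$ under $\sineb$ alone. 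Part~(B) then falls out of the existence of $\M_{\La,\R}=\lim_p\M_{\La,\La_p}$, which the same discrepancy estimates yield. Your ingredients are the right ones; what is missing is this localization device that turns distributional convergence into something you can actually use.
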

Parts \ref{Thm2Bbis} of Theorem \ref{theo:DLRweak} and Theorem \ref{theo:DLRmain} are the same statement. It will be thus enough to combine Part \ref{Thm1A} of Theorem~\ref{theo:DLRmain} with Theorem \ref{theo:DLRweak} in order to obtain Part \ref{Thm1C} of Theorem \ref{theo:DLRmain} and to conclude the proof - this shall be done in Section \ref{sec:rigidity} and will strongly rely on the \textit{canonical} form of the DLR equations. 

The remainder of this section is devoted to the proof of Theorem~\ref{theo:DLRweak}. 

\paragraph{Some terminology.} 
\begin{itemize}
\item[$\diamond$] We say that a measurable function $f$ is \textit{local} if
there exists a compact subset $K\subset\R$ such that $f(\gamma)=f(\gamma_K)$ for every $\gamma\in\Conf(\R)$. 
\item[$\diamond$] We use the notation $\La_p$ for the line segment $[-\frac p2,\frac p2]$. 
\item[$\diamond$] If $n$ is an integer, and $\La$ a bounded Borel set, we denote by $ \bv B_{n, \La}(\d \eta)$ the law of the Bernoulli process drawing a random configuration of $n$ independent points  uniformly in $\La$.  
\end{itemize}

For the rest of this section,  we fix $\Lambda\subset\R$ a bounded Borel set and $f:\Conf(\R)\to\R$ a bounded Borel \emph{local} function.  The integer $p$ used below is always assumed large enough so that $\La\subset\La_p$.

\subsection{Move functions, Gibbs kernels, and proof of Theorem \ref{theo:DLRweak}\ref{Thm2Bbis}}

\label{sec:movingpointsfinite}
First, we introduce \emph{move functions} and \emph{Gibbs kernels}, and use them to rephrase Theorem~\ref{theo:DLRweak}.

\subsubsection{Heuristics for the move functions}
The basic idea is the following: for a given bounded Borel set $\Lambda$ we want to define a Gibbs measure, formally written $\Gibbs_\La(\,\cdot\, | \Ext)$ on interior configurations $\Int$ in $\La$, given an exterior configuration $\Ext$ in $\La^c$, namely
$$
\Gibbs_\La(\Int | \Ext) \propto \exp \left(-\beta \,\Ener\left(\Int \cup \Ext\right)\right).
$$
The physical energy reads
$$
\Ener\left(\Int \cup \Ext\right) = \Int \times \Int +  \Ext \times \Ext +  2 \cdot \Ext \times \Int, 
$$
where the product sign means “interact with”, but the exterior configuration $\Ext$ could be infinite, yielding two infinite terms in the definition of the energy. Since $\Ext$ is fixed, the interaction term $\Ext \times \Ext$ can be absorbed by the normalization constant, and we are left with
\begin{equation}
\label{gibbsintext}
\Gibbs_\La(\Int | \Ext) \propto \exp \left(-\beta\left( \Int \times \Int +  2 \cdot\Ext \times \Int \right) \right).
\end{equation}
To deal with the fact that $\Int \times \Ext$ could correspond to an infinite, or undefined, summation, we introduce the move functions: we fix some reference interior configuration $\Intz$ and we define $\Moove_\La(\Int, \Ext)$ as the energetic cost to move the points in $\La$ from $\Intz$ to those of $\Int$, as felt by the points of $\Ext$, namely:
\begin{equation}
\label{MooveA}
\Moove_\La(\Int, \Ext) := 2 \cdot \Ext \times \Int - 2 \cdot\Ext \times \Intz = 2 \cdot\Ext \times (\Int - \Intz).
\end{equation}
Thus we can write,
\begin{equation}
\label{extintmoove}
2 \cdot\Ext \times \Int = \Moove_\La(\Int, \Ext) + 2 \cdot\Ext \times \Intz.
\end{equation}
The second term in the right-hand side of \eqref{extintmoove} is independent of $\Int$ and can again be absorbed by the normalization constant in \eqref{gibbsintext}, yielding
$$
\Gibbs_\La(\Int | \Ext) \propto  \exp \left(-\beta \left(\Int \times \Int +  \Moove_\La(\Int, \Ext) \right) \right),
$$
which is the form of the \textit{Gibbs kernels} (or \textit{Gibbs specifications}) that we introduce more precisely below. If $\Ext$ is infinite, it is still not clear why $\Moove_\La(\Int, \Ext)$ would yield a finite quantity, but we can hope for some compensation between the two terms $\Ext \times \Int$ and $\Ext \times \Intz$. These  manipulations are valid in the finite case, and in the case of an infinite exterior configuration the rigorous approach consists in truncating the said configuration outside some large, but finite domain, and to show that the infinite-volume limit of these partial move functions exists, as we shall do next.

\subsubsection{Gibbs kernels (finite window)}
\label{sec:DLRforsinebetaprocess}

\begin{definition}[Logarithmic interaction]
\label{defi:nonperiodicDLR}
We set for convenience\footnote{This allows to include the diagonal in double sums, since $g(x_i - x_i)$ is set to be zero. Of course, $g$ is not continuous at $0$, but neither is $\log$...}
\eq
\label{def:g}
g(x):=
\begin{cases}
-\log|x|& \text{ if } x\in\R\setminus\{0\}\\
0 & \text{ if } x=0
\end{cases}\qe

For any $p\geq 1,$ any $\gamma,\eta \in\Conf(\R)$, we introduce\footnote{In these definitions, the $1/2$ factor has been introduced for aesthetic reasons, so that the usual C$\beta$E  is an approximation of  $\sineb.$}:
 \begin{align}
\label{def:Hlambda}  \text{(Interaction energy)} &&\H_\Lambda(\g) & := \frac{1}{2}\iint g(x-y)\, \d \g_\La ^{\otimes 2} 
\\
\label{def:Mlambda}  \text{(Move function)} && \M_{\La, \La_p}(\eta, \g) & :=  \iint g(x-y) \,\d (\eta_\La-\g_\La) \otimes \g_{\La_p \setminus \La} 
\\
  \text{(Normalization)} &&Z_{\La, \La_p}( \g) &:= \int \e^{- \beta (\H_\Lambda(\eta)+  \M_{\La, \La_p}(\eta, \g))} \, \bv B_{|\g_\La|, \La}(\d \eta)
  \\
 \label{def:Llalap} \text{(Gibbs kernel)} &&\L_{\La, \La_p}(\d \eta, \g) &:= \frac{1}{ Z_{\La, \La_p}( \g)} \,\e^{- \beta(\H_\Lambda(\eta)+ \M_{\La, \La_p}(\eta, \g))} \, \bv B_{|\g_\La|, \La}(\d \eta).
 \end{align}

The measure $\L_{\La, \La_p}(\d \eta, \g)$ is the central object for the DLR formalism, called a \textit{Gibbs kernel}, or \textit{specification}.  Moreover we set,
 \begin{equation}
 \label{fLaR}
 f_{\La, \La_p}(\gamma): = \int f(\eta \cup \gamma_{\La_p\setminus\La}) \,\L_{\La, \La_p}(\d\eta, \g).
 \end{equation}
  \end{definition}
  
The right-hand side of \eqref{fLaR} should be read as follows: take $\gamma\in\Conf(\R)$ and a test function~$f$. Keep the configuration $\gamma$ in $\Lambda_p \backslash \Lambda$ and sample a new set of points $\eta$ inside $\Lambda$ with the same number of points as the old one, namely $|\gamma_{\Lambda}|$. This sampling is done according to the Gibbs measure $\L_{\La, \La_p}(\d \eta, \g)$ defined in \eqref{def:Llalap}, associated to the inverse temperature $\beta$ and the energy $\H_\Lambda(\eta)+  \M_{\La, \La_p}(\eta, \g)$, which represents the sum of the interaction energy of the points of $\eta$ (inside $\Lambda$) with themselves and the cost of moving points from the old configuration $\gamma_\Lambda$ to the new one $\eta$, as felt by the points  in $\Lambda_p \backslash \Lambda$ (in particular, the “reference interior configuration” $\Intz$ as in \eqref{MooveA} is here chosen to be $\gamma_{\La}$). Then, combine $\eta$ and $\gamma_{\Lambda_p \backslash \Lambda}$ into a configuration in $\Lambda_p$, and test it against the function $f$. The quantity $f_{\La, \La_p}(\g)$ is the expectation of this operation; let us emphasize that the randomness comes from the re-sampling in $\La$, while $\gamma_{\La^c}$ is fixed.

  \subsubsection{Gibbs kernels (infinite window) and reformulation of Theorem \ref{theo:DLRweak}}
We will show in Lemma \ref{lem:definimoveinf} below that the limit $p\to\infty$ of the quantities in Definition \ref{defi:nonperiodicDLR} exists. In particular, we obtain:

  \begin{lemma}[Existence of the move functions]
  \label{existsineb}
  For $\sineb$-a.e. configuration $\gamma$ and any $\eta\in\Conf(\La)$ satisfying $|\g_\La| = |\eta|,$ the limit
  \eq
  \label{convMove}
  \M_{\La,\R}(\eta, \g) :=\lim_{p\to\infty}  \M_{\La,\La_p}(\eta, \g)
  \qe
  exists and is finite. Moreover, for $\gamma$ fixed, the convergence in \eqref{convMove} is uniform in the choice of $\eta$.
  \end{lemma}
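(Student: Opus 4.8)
The plan is to show that $p\mapsto\M_{\La,\La_p}(\eta,\g)$ is a \emph{uniformly Cauchy} sequence in $\eta$, by isolating inside the move function a conditionally convergent series whose convergence is governed by a discrepancy estimate for $\sineb$. First I would fix a configuration $\g$ (to be restricted to a full $\sineb$-measure event at the very end) and set $N:=|\g_\La|$ and $R:=\sup\{|x|:x\in\La\}<\infty$, so that $\La\subset\La_{2R}$. From the definition \eqref{def:Mlambda}, for any $\eta\in\Conf(\La)$ with $|\eta|=N$ and any $p'>p>4R$,
\[
\M_{\La,\La_{p'}}(\eta,\g)-\M_{\La,\La_p}(\eta,\g)=\sum_{\substack{u\in\g\\ p/2<|u|\le p'/2}}h(u),\qquad h(u):=\sum_{x\in\eta}g(x-u)-\sum_{z\in\g_\La}g(z-u),
\]
and every $u$ appearing satisfies $|u|>2R\ge|x|,|z|$ for all $x\in\eta$, $z\in\g_\La$. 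The contribution to $\M_{\La,\La_p}(\eta,\g)$ of the finitely many points $u\in\g$ with $|u|\le 2R$ is finite (since $u\notin\La$ forces $u\ne x,z$) and does not depend on $p$ once $p>4R$; hence it suffices to prove that the displayed increment tends to $0$ as $p\to\infty$ \emph{uniformly in $\eta$}, for then $(\M_{\La,\La_p}(\eta,\g))_p$ is uniformly Cauchy and the uniform limit $\M_{\La,\R}(\eta,\g)$ is automatically finite.

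\smallskip\noindent\emph{Separating the leading term.} For real $u$ with $|u|>2R\ge|x|$ one has $1-x/u>0$ and $g(x-u)=-\log|u|+\sum_{k\ge1}x^k/(ku^k)$. Summing over $x\in\eta$ and $z\in\g_\La$, the terms $-\log|u|$ cancel precisely because $|\eta|=|\g_\La|=N$ (this is where the hypothesis on $\eta$ enters), leaving $h(u)=\sum_{k\ge1}a_k/(ku^k)$ with $a_k:=\sum_{x\in\eta}x^k-\sum_{z\in\g_\La}z^k$ and $|a_k|\le 2NR^k$. Writing $h(u)=a_1/u+r(u)$, I get $|a_1|\le 2NR$ and $|r(u)|\le 2N\sum_{k\ge2}(R/|u|)^k\le 4NR^2/|u|^2$ for $|u|\ge 2R$ — both bounds uniform over $\eta$ with $|\eta|=N$. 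Summing over $p/2<|u|\le p'/2$: the $r$-part is at most $4NR^2\sum_{u\in\g,\,|u|>p/2}u^{-2}$, which is $o(1)$ uniformly in $\eta$ as soon as $\sum_{u\in\g}u^{-2}<\infty$; the leading part $a_1\sum_{p/2<|u|\le p'/2}u^{-1}$ is $o(1)$ uniformly in $\eta$ as soon as $\sum_{u\in\g,\,|u|\le T}u^{-1}$ converges as $T\to\infty$. At this point everything is reduced to these two facts about the single configuration $\g$.

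\smallskip\noindent\emph{The cancellation.} The conditional convergence of $\sum_{u\in\g}u^{-1}$ must come from compensation between positive and negative points. Set $\g^{+}(t):=\g((0,t])$, $\g^{-}(t):=\g([-t,0))$ and $D^{\pm}(t):=\g^{\pm}(t)-t$. Grouping the two halves of the sum (the sign of $1/u$ absorbing the reflection) gives, for $T_2>T_1$ large, $\sum_{T_1<|u|\le T_2}u^{-1}=\int_{T_1}^{T_2}t^{-1}\,d(\g^{+}-\g^{-})(t)$, and integration by parts turns this into
\[
\sum_{T_1<|u|\le T_2}\frac{1}{u}=\Big[\frac{D^{+}(t)-D^{-}(t)}{t}\Big]_{T_1}^{T_2}+\int_{T_1}^{T_2}\frac{D^{+}(t)-D^{-}(t)}{t^{2}}\,dt .
\]
Both required facts then follow once we know that, for $\sineb$-a.e.\ $\g$, the \emph{discrepancy} $D^{+}(t)-D^{-}(t)=\g((0,t])-\g([-t,0))$ satisfies $D^{+}(t)-D^{-}(t)=o(t)$ with $\int_1^\infty|D^{+}(t)-D^{-}(t)|\,t^{-2}\,dt<\infty$ (the crude bound $\g([-t,t])=O(t)$, valid a.s., already yields $\sum u^{-2}<\infty$). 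For $\sineb$ this is comfortably available from the known variance and tail estimates for the number of points in an interval, which in fact give $D^{+}(t)-D^{-}(t)=O((\log t)^{1/2+\epsilon})$ almost surely (see e.g.\ \cite{kritchevski2012scaling,holcomb2015large}, or the discrepancy estimates used elsewhere in this section).

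\smallskip\noindent\emph{Main obstacle.} I expect this discrepancy input to be the only genuinely delicate point: it is exactly the ``subtle cancellation'' responsible for the existence of the exterior potential \eqref{DLRweight}, and without an almost sure sublinear bound (with integrable tail) on the left/right count discrepancy the move function truly fails to converge. Everything else — the Taylor expansion, the uniform control of $a_1$ and $r(u)$, and the passage from uniform Cauchyness to the finite uniform limit — is routine.
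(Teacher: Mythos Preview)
Your argument is correct and lands on the same endgame as the paper --- a summation-by-parts reduction to almost-sure discrepancy control --- but the route you take to get there is organized differently. You Taylor-expand $g(x-u)=-\log|u|+\sum_{k\ge1}x^k/(ku^k)$ and use the constraint $|\eta|=|\g_\La|$ to kill the $-\log|u|$ term outright, leaving a leading $a_1/u$ piece whose conditional convergence you read off from the left/right discrepancy $D^+(t)-D^-(t)$. The paper instead subtracts the Lebesgue background, working with $\Move_{\La,\La_p}(\eta,\g)=\int_{\La_p\setminus\La}\Psi\,\d(\g-\Leb)$ where $\Psi=g*(\eta-\g_\La)$, and controls the resulting discrete sums via mean-value bounds $|\Psi(x)|\le W_1(\eta,\g_\La)/\dist(x,\La)$ and $|\Psi'(x)|\le 8W_1(\eta,\g_\La)/\dist(x,\La)^2$; after summation by parts this produces one-sided discrepancies $\Discr_{(0,t]}(\g)$ rather than their difference. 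Your approach is a bit more elementary and makes the two cancellation mechanisms (equal cardinality, then left/right symmetry) very transparent. The paper's approach buys uniformity across settings: the same $\Psi$-and-background machinery handles the $n$-periodic potential $g_n$ (where no clean Taylor expansion is available) and yields the estimates in Lemma~\ref{le:estimove}; and the discrepancy input is drawn from the renormalized-energy bound $\E_P[\W]<\infty$ (Lemmas~\ref{lem:energytodiscr} and~\ref{lem:Discrvers0}), which makes the statement hold for any stationary process of finite energy, not just $\sineb$. Your citation of $\sineb$-specific variance estimates is fine for the lemma as stated, and you are right that the paper's own discrepancy estimates would also suffice.
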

  This allows us to extend the notation of Definition \ref{defi:nonperiodicDLR} to “$p = +\infty$”, namely to give a meaning to 
  \begin{align}
  Z_{\La, \R} &:= \int \e^{- \beta(\H_\Lambda(\eta)+  \M_{\La, \R}(\eta, \g))} \, \bv B_{|\g_\La|, \La}(\d \eta).\\
\L_{\La, \R}(\d \eta, \g) &:= \frac{1}{ Z_{\La, \R}( \g)} \,\e^{- \beta(\H_\Lambda(\eta)+  \M_{\La, \R}(\eta, \g))} \, \bv B_{|\g_\La|, \La}(\d \eta).
\\
\label{fKernel}
 f_{\La, \R}(\gamma)& : = \int f(\eta \cup \gamma_{\La^c}) \,\L_{\La, \R}(\d\eta, \g).
 \end{align}
 \begin{remark}
 \label{movedontmove}
 At this point, an important observation is that the probability measure $\L_{\La, \R}(\d \eta, \g)$ only depends on the exterior $\gamma_{\La^c}$ and the number of points $|\gamma_{\La}|$. Indeed, if one changes $\gamma_\La$ to another configuration with the same number of points, then the move functions is changed by an additive constant which can be incorporated into a new partition function.
 \end{remark}
 Moreover, Lemma \ref{existsineb} already yields part \ref{Thm2Bbis} of the theorem.  
  
  \begin{proof}[Proof of Theorem \ref{theo:DLRweak}, part \ref{Thm2Bbis}] Since $\sineb$ is stationary, we know that $0$ is not a point of $\gamma$ for $\sineb$-a.e $\gamma$. Moreover, since $|\gamma_\La|=|\eta|$, one can write
  $$
  \M_{\La,\La_p}(\eta, \g)=\sum_{\substack{u\in\gamma_{\La^c}\\|u|\leq p}} \left(\sum_{x\in\gamma_\La}-\sum_{x\in\eta}\right)\log \left|1-\frac xu\right|,
  $$
  where the sum over the points of $\eta$ is to be understood with multiplicity.
  This yields
  \eq
  \label{linkGibbsbetaens}
  \e^{-\beta \M_{\La,\La_p}(\eta, \g)}= \frac{\prod_{x\in\eta}\Big(\prod_{\substack{u\in\gamma_{\La^c}\\|u|\leq p}}  \left|1-\frac xu\right|^{\beta}\Big)}{\prod_{y\in\gamma}\Big(\prod_{\substack{u\in\gamma_{\La^c}\\|u|\leq p}}  \left|1-\frac yu\right|^{\beta}\Big)} = \frac{\prod_{x\in\eta}\Big(\prod_{\substack{u\in\gamma_{\La^c}\\|u|\leq p}}  \left|1-\frac xu\right|^{\beta}\Big)}{\e^{-\beta \M_{\La,\La_p}( |\gamma_\La| \delta_0, \g)}},
  \qe
Letting $p\to\infty$, and using Lemma~\ref{existsineb} we obtain the existence of the weight $\omega(\cdot|\gamma_{\La^c})$, as defined in \eqref{DLRweight}.
   
  Clearly, $Z_{\La,\La_p}(\gamma) \in (0,+\infty)$ for every $p$, and the convergence \eqref{convMove} is uniform in $\eta$, hence $Z_{\La,\R}(\gamma) \in (0,+\infty)$. We may observe that, by definition of $Z(\gamma_{\La^c}, |\gamma_\La|)$, as in \eqref{DLRZ}, and by \eqref{linkGibbsbetaens}, we have $Z_{\La,\R}(\gamma) =Z(\gamma_{\La^c},|\gamma_\La|)),$ which concludes the proof.
  \end{proof}

Finally, let us observe that part \ref{Thm2Cbis} of Theorem \ref{theo:DLRweak} can be written as
\begin{equation}
\label{canonicalDLR}
\E_{\sineb}(f-f_{\La, \R}) = 0.
\end{equation}
The remainder of this section is devoted to prove \eqref{canonicalDLR}. To do so, we rely on the convergence of the microscopic statistics of C$\beta$E towards $\sineb$.

\subsection{Circular ensembles and the associated DLR equations}
\label{sec:finitemodel}
\subsubsection{Circular ensembles as model of log-gases}
As mentioned in the introduction, the one-dimensional log-gas is related to the \textit{Circular ensembles} appearing in random matrix theory. In the C$\beta$E, see e.g. \cite{forrester2010log}, the joint law of the eigenvalues $(\e^{i\theta_1}, \dots, \e^{i\theta_n})$ is given by 
\eq
\label{CbetaElaw}
\frac{1}{C_{n, \beta}} \prod_{j <\ell} \left|\e^{i \theta_j}- \e^{i \theta_\ell}\right|^{\beta} \prod_{j=1}^n \d\theta_j,
\qe
where $\d \theta_j$ is the Lebesgue measure on $[-\pi, \pi]$. The normalization constant $C_{n,\beta}$ is known: 
\eq
\label{partitionCbetaE}
C_{n, \beta} := (2\pi)^n \frac{\Gamma(\frac\beta 2 n+1)}{\Gamma(\frac\beta 2+1)^n}.
\qe
In this section, the central role is played by the point process $Q_{\n, \beta}$, defined as the push-forward of the density in \eqref{CbetaElaw} by the map
$$
(\theta_1, \dots, \theta_n) \mapsto \left(\frac{n \theta_1}{2\pi}, \dots, \frac{n\theta_n}{2\pi}\right).
$$
The realizations of $Q_{n, \beta}$ are point configurations in $\La_n = [-\frac n 2,\frac n2]$, that we will use as finite window approximations of $\sineb$. It can be equivalently defined as follows.

\begin{definition}[Periodic logarithmic interaction]
\label{sec:defi_finite_model}
For any $\n\geq 1$, we set
\eq
\label{def:gng}
g_n(x):=
\begin{cases}
 - \log\left|2 \sin \left(\displaystyle\frac{\pi x}{n}\right)\right|& \text{ if } x\in\R\setminus\{0\}\\
\;0 & \text{ if } x=0
\end{cases}.
\qe
For any  $\gamma \in\Conf(\R),$  we introduce the interaction energy
 \begin{equation}
\label{def:HnLambda} \H^{n-\per}_\Lambda(\g) := \frac{1}{2}\iint g_n(x-y) \d \g_\La^{\otimes 2}.
\end{equation}
\end{definition}

\begin{definition}[The canonical Gibbs measure of the finite periodic log-gas]
For any $\beta>0$, we consider the point process of $n$ points in $\Lambda_n$ given by
\eq
\label{eq:Qper}
\Qnbeta(\d \gamma):=\frac1{Z_{n, \beta}}\ \e^{-\beta \,\H^{n-\per}_{\Lambda_n}(\g) }\bv B_{n, \La_n}(\d \gamma),
\qe
where the partition function is given by, see \eqref{partitionCbetaE},
\eq
\label{Partitionbeta}
 Z_{n, \beta}:=\int \e^{-\beta \H^{n-\per}_{\Lambda_n}(\g) }\bv B_{n, \La_n}(\d \gamma)= \frac{C_{n, \beta}}{(2\pi)^n }= \frac{\Gamma(\frac\beta 2 n+1)}{\Gamma(\frac \beta 2+1)^n}.
\qe
\end{definition}
The probability measure $Q_{n, \beta}$ can be taken as a model of a finite log-gas:  \eqref{eq:Qper} gives a rigorous meaning to the informal definition \eqref{loggasinformal}. Let us emphasize that the choice of a \textit{periodic} logarithmic potential is not the usual one, but it is more convenient for us, and yields the same microscopic limit.

\subsubsection{Gibbs kernels (periodic setting)}
We introduce the following notation, which should be compared to Definition \ref{defi:nonperiodicDLR}. We add the superscript ${n-\per}$, in order to stress the fact that the $n$-periodic logarithmic interaction is used.
\begin{definition} For any $\g,\eta\in\Conf(\R)$,
\begin{align}
\label{def:Mnlambda}  \text{(Move function)} && \M_{\La, \La_p}^{n-\per}(\eta, \g) & :=  \iint g_n(x-y) \,\d (\eta_\La-\g_\La) \otimes \g_{\La_p \setminus \La} 
\\
 \label{def:Zper}  \text{(Normalization)} &&Z_{\La, \La_p}^{n-\per}( \g) &:= \int \e^{- \beta (\H^{n-\per}_\Lambda(\eta)+  \M^{n-\per}_{\La, \La_p}(\eta, \g))} \, \bv B_{|\g_\La|, \La}(\d \eta)
  \\
 \label{def:Llalapn} \text{(Gibbs kernel)} &&\L_{\La, \La_p}^{n-\per}(\d \eta, \g) &:= \frac{1}{ Z^{n-\per}_{\La, \La_p}( \g)} \,\e^{- \beta(\H^{n-\per}_\Lambda(\eta)+  \M^{n-\per}_{\La, \La_p}(\eta, \g))} \, \bv B_{|\g_\La|, \La}(\d \eta).
 \end{align}
We moreover use the notation,
\eq
 \label{fLaRn}
f^{n-\per}_{\La, \La_p}(\gamma)  : = \int f(\eta \cup \gamma_{\La_p\setminus\La}) \,\L^{n-\per}_{\La, \La_p}(\d\eta, \g).
\qe
\end{definition}

\subsubsection{DLR equations for the Circular ensemble}
The first step towards the canonical DLR equations \eqref{canonicalDLR} is  the following result.
\begin{proposition}[Canonical DLR equations for the finite log-gas] 
\label{nperiodicDLR}
For any bounded Borel set $\Lambda \subset \Lambda_n$ and any bounded Borel function $f:\Conf(\La_n)\to\R,$ we have
\begin{equation}
\label{DLRfinitemodel}
\E_{\Qnbeta}(f -  f^{n-\per}_{\La, \La_n}) = 0.
\end{equation}
\end{proposition}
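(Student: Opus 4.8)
The plan is to establish \eqref{DLRfinitemodel} by a direct finite-volume computation: since the claim amounts to $\E_{\Qnbeta}[f] = \E_{\Qnbeta}[f^{n-\per}_{\La,\La_n}]$, I will expand both expectations against the Bernoulli reference measure $\B_{n,\La_n}$ and check that they produce the same expression. First I would condition on the number $K := |\gamma_\La|$ of points falling in $\La$: under $\B_{n,\La_n}$, $K$ is $\mathrm{Binomial}(n,q)$ with $q = \Leb(\La)/\Leb(\La_n)$, and given $K=k$ the restrictions $\gamma_\La$ and $\gamma_{\La_n\setminus\La}$ are independent with respective laws $\B_{k,\La}$ and $\B_{n-k,\La_n\setminus\La}$ (the degenerate case $\Leb(\La)=0$ is trivial, since then $\gamma_\La=\emptyset$ and $f^{n-\per}_{\La,\La_n}=f$ for $\Qnbeta$-a.e.\ $\gamma$). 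On the energy side I would split $\gamma_{\La_n} = \gamma_\La + \gamma_{\La_n\setminus\La}$ and use the symmetry of $g_n$ to write $\H^{n-\per}_{\La_n}(\gamma) = \H^{n-\per}_\La(\gamma_\La) + \H^{n-\per}_{\La_n\setminus\La}(\gamma_{\La_n\setminus\La}) + W(\gamma_\La,\gamma_{\La_n\setminus\La})$, where $W(\mu,\nu) := \iint g_n(x-y)\,\mu(\d x)\,\nu(\d y)$ is the interior/exterior interaction (well defined, with no diagonal contribution, since $\La$ and $\La_n\setminus\La$ are disjoint). Combining the two decompositions expresses $\E_{\Qnbeta}[f]$ as $Z_{n,\beta}^{-1}\sum_{k}\binom nk q^k(1-q)^{n-k}$ times a double integral of $f(\eta\cup\xi)\,\e^{-\beta(\H^{n-\per}_\La(\eta) + W(\eta,\xi) + \H^{n-\per}_{\La_n\setminus\La}(\xi))}$ against $\B_{k,\La}(\d\eta)\,\B_{n-k,\La_n\setminus\La}(\d\xi)$.

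Next I would unpack $f^{n-\per}_{\La,\La_n}$. From \eqref{def:Mnlambda} one has $\M^{n-\per}_{\La,\La_n}(\eta,\gamma) = W(\eta,\gamma_{\La_n\setminus\La}) - W(\gamma_\La,\gamma_{\La_n\setminus\La})$, and the second term, being independent of $\eta$, factors out of both the numerator in \eqref{fLaRn} and of the normalization $Z^{n-\per}_{\La,\La_n}(\gamma)$ in \eqref{def:Llalapn}, and cancels. This leaves $f^{n-\per}_{\La,\La_n}(\gamma)$ as a ratio depending on $\gamma$ only through $\xi := \gamma_{\La_n\setminus\La}$ and $k := |\gamma_\La|$ (consistently with Remark~\ref{movedontmove}): numerator $\int f(\eta\cup\xi)\,\e^{-\beta(\H^{n-\per}_\La(\eta) + W(\eta,\xi))}\,\B_{k,\La}(\d\eta)$ over denominator $\mathcal Z(\xi,k) := \int \e^{-\beta(\H^{n-\per}_\La(\eta) + W(\eta,\xi))}\,\B_{k,\La}(\d\eta)$.

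Finally I would substitute this expression into $\E_{\Qnbeta}[f^{n-\per}_{\La,\La_n}]$ and expand it via the same decomposition as above. The key observation is that $f^{n-\per}_{\La,\La_n}(\eta'\cup\xi)$ does not depend on the interior configuration $\eta'$ being integrated, so the $\B_{k,\La}(\d\eta')$-integral acts only on the factor $\e^{-\beta(\H^{n-\per}_\La(\eta') + W(\eta',\xi))}$ and reproduces exactly $\mathcal Z(\xi,k)$, which cancels the denominator of $f^{n-\per}_{\La,\La_n}$. After one application of Fubini --- legitimate because $f$ is bounded and all measures involved are finite --- the resulting expression is term by term identical to the expansion of $\E_{\Qnbeta}[f]$ obtained above, which proves the identity. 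Since everything takes place in finite volume with a finite number of points, I do not expect any genuine analytic difficulty here; the only thing to be careful about is the bookkeeping --- the conditional decomposition of the Bernoulli measure, the fact that the binomial weights coincide on both sides (and hence play no role), and the precise identification of the $\eta$-independent piece of the move function responsible for the cancellation.
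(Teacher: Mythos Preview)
Your proof is correct and takes essentially the same approach as the paper: both exploit the energy decomposition (which the paper packages as the algebraic identity \eqref{algebraicidentity}) and the cancellation of the interior normalization $\mathcal Z(\xi,k)$ against the integral over the original interior configuration. The paper organizes the computation via a change of variables $(\eta,\gamma)\mapsto(\gamma_\La,\eta\cup\gamma_{\La^c})$ swapping old and new interiors rather than by conditioning explicitly on $|\gamma_\La|$, but the mathematical content is identical.
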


\begin{proof}
Let us write for convenience $\La^c:= \La_n\setminus \La$.  The definitions   \eqref{def:Llalapn}--\eqref{eq:Qper}  of $f^{n-\per}_{\La, \La_n}$ and $\Qnbeta$ yield
\begin{multline*}
Z_{n,\beta} \,\E_{Q_{n, \beta}}(f^{n-\per}_{\La, \La_n}) = \\  \iint  f(\eta \cup \g_{\La^c}) \frac{1}{ Z^{n-\per}_{\La, \La_n}( \g) } \e^{-\beta ( \H^{n-\per}_\Lambda(\eta)+  \M^{n-\per}_{\La, \La_n}(\eta, \g))}
 \e^{-\beta \H^{n-\per}_{\La_n}(\g)} \bv B_{|\g_\La|, \La}(\d \eta) \bv B_{n, \La_n}(\d \g).
\end{multline*}
Next, we use  the following algebraic identity: for any $\eta\in\Conf(\La)$,
\begin{equation}
\label{algebraicidentity}
\H^{n-\per}_{\La_n}(\gamma) + \M^{n-\per}_{\Lambda,\Lambda_n}(\eta, \gamma)+ \H_{\Lambda}^{n-\per}(\eta)=\H^{n-\per}_{\La_n}(\eta \cup \gamma_{\Lambda^c})+\H^{n-\per}_{\Lambda}(\gamma).
\end{equation}
It can be easily checked from the definitions; indeed, using the informal notation ``$\times$'' for ``interact with'', we can write
\begin{align*}
(\eta \cup \gamma_{\La^c})^{\times 2} & = \eta^{\times 2} + \gamma_{\La^c}^{\times 2} + 2 \cdot \eta \times \gamma_{\La^c} \\
\gamma^{\times 2} = (\gamma_{\Lambda} \cup \gamma_{\Lambda^c})^{\times 2}  & = \gamma_{\Lambda}^{\times 2} + \gamma_{\La^c}^{\times 2} + 2 \cdot \gamma_{\La} \times \gamma_{\La^c},
\end{align*}
and substracting the second line from the first one, we obtain \eqref{algebraicidentity}. Combined with the relation,
$$
Z_{\La,\La_n}^{n-\per}(\gamma)=Z_{\La,\La_n}^{n-\per}(\eta\cup\gamma_{\La^c})\,\e^{-\beta \M_{\La,\La_n}(\eta,\gamma)}=Z_{\La,\La_n}^{n-\per}(\eta\cup\gamma_{\La^c})\,\e^{\beta \M_{\La,\La_n}(\gamma,\eta\,\cup\gamma_{\La^c})},
$$
this yields
\begin{multline*}
Z_{n,\beta} \,\E_{Q_{n, \beta}}(f^{n-\per}_{\La, \La_n}) = \iint  f(\eta \cup \g_{\La^c}) \frac{1}{ Z^{n-\per}_{\La, \La_n}( \eta\cup\g_{\La^c}) }\\
\times \e^{-\beta (\H^{n-\per}_{\La_n}(\eta \,\cup \gamma_{\Lambda^c})+\M_{\La,\La_n}(\gamma,\eta\,\cup\gamma_{\La^c})+\H^{n-\per}_{\Lambda}(\gamma))}
 \bv B_{|\g_\La|, \La}(\d \eta) \bv B_{n, \La_n}(\d \g).
\end{multline*}
Making the change of variables $(\eta,\gamma)\mapsto(\zeta,\xi)$ with $\xi:=\eta\cup\gamma_{\La^c}$ and $\zeta:=\gamma_\La$, we obtain
\begin{align*}
&Z_{n,\beta} \,\E_{Q_{n, \beta}}(f^{n-\per}_{\La, \La_n})\\ 
&= \iint  f(\xi) \frac{1}{ Z^{n-\per}_{\La, \La_n}( \xi )} \e^{-\beta (\H^{n-\per}_{\La_n}(\xi)+\M_{\La,\La_n}(\zeta,\xi)+\H^{n-\per}_{\Lambda}(\zeta))}
 \bv B_{|\xi_\La|, \La}(\d \zeta) \bv B_{n, \La_n}(\d \xi)\\
& = Z_{n,\beta} \,\E_{Q_{n, \beta}}(f),
\end{align*}
and the proposition is proved.
\end{proof}

\subsubsection{Local convergence of the circular ensembles to Sine-beta}
\label{sec:DLRequations}
As mentioned in the introduction, the point process $Q_{n,\beta}$ is known to converge towards $\sineb$ as $n\to\infty$. 
This convergence was first studied in \cite{killip2009eigenvalue}. They have shown that for any
$C^\infty$ function $\psi : \R \to \R$ with compact support,
\eq \label{convlaplace}
\lim_{n\to\infty}\E_{Q_{n,\beta}}\left[\exp\left(\int \psi\,\d\gamma\right)\right]=\E_{\sineb}\left[\exp\left(\int \psi\,\d\gamma\right)\right].
\qe
To be more precise, the limiting process was called $C_{\beta}E$ in \cite{killip2009eigenvalue}; almost at the same time, \cite{valko2009continuum} have shown
a similar result for the bulk limit of the Gaussian $\beta$-ensemble, with a limiting process called $\sineb.$
A bit later, it has been observed that $\sineb$ and $C_{\beta}E$ are identical, so that the result from 
\cite{killip2009eigenvalue} can now be stated under the form \eqref{convlaplace}; see \cite[Corollary 1.7]{nakano2014level} or \cite[Theorem 28]{valko2017sine}.  
For our purpose, we will need a slightly stronger convergence result that we state in the next proposition.

\begin{proposition}
\label{prop:sineb}
The sequence of point processes  $(Q_{n,\beta})_{n \geq 1}$ converges to the point process $\sineb$ in the topology of local convergence: for any bounded, Borel and local test function $\phi:\Conf(\R)\to\R$, we have
$$
\lim_{n\to\infty}\E_{Q_{n,\beta}}[\phi]=\E_{\sineb}[\phi].
$$
\end{proposition}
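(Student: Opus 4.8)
The statement to be proven is Proposition \ref{prop:sineb}: upgrading the convergence \eqref{convlaplace} of Laplace functionals (tested against smooth compactly supported $\psi$) to convergence in the topology of local convergence, i.e. against bounded, Borel, \emph{local} test functions $\phi$. The natural route is to recall that convergence of Laplace functionals against all $C^\infty_c$ test functions is equivalent to weak convergence in the vague topology on $\Conf(\R)$ (this is the standard characterization of convergence in distribution of point processes; see e.g. \cite[Theorem 4.11 or Chapter 16]{Kallenberg}), so that $Q_{n,\beta}\Rightarrow\sineb$ weakly. The task then reduces to showing that, for a bounded Borel local function $\phi$ with localizing compact set $K$, the map $\gamma\mapsto\phi(\gamma_K)$ is $\sineb$-almost everywhere continuous for the vague topology, which by the portmanteau theorem upgrades $\E_{Q_{n,\beta}}[\phi]\to\E_{\sineb}[\phi]$; or, alternatively, to go through the number-counting statistics directly.

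\textbf{Key steps.} First I would enlarge $K$ slightly to a compact interval $[-R,R]$ with $\phi$ still localized in it. Since $\phi$ only depends on $\gamma$ through the finite configuration $\gamma_{[-R,R]}$, and $\gamma$ is $\sineb$-a.s.\ simple, $\phi(\gamma)$ depends measurably on the finitely many points in $[-R,R]$; the only source of vague-discontinuity is (i) points of $\gamma$ sitting exactly on the boundary $\{-R,R\}$, or (ii) the value of $\phi$ itself being discontinuous in the positions of the interior points. To handle (i), I would use stationarity of $\sineb$ (and the fact that its one-point intensity is finite, equal to Lebesgue) to note that for $\sineb$-a.e.\ $\gamma$ there is no point at $\pm R$, and moreover one can choose $R$ in a full-measure set of radii for which this holds. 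To handle (ii) in full generality, where $\phi$ is merely Borel, the cleanest fix is to first prove the convergence for the subclass of $\phi$ of the form $\phi(\gamma) = F(\gamma(B_1),\dots,\gamma(B_k))$ with $B_i$ bounded Borel and $F$ bounded — this reduces to joint convergence in law of the counting vector $(\gamma(B_1),\dots,\gamma(B_k))$, which follows from the Laplace/vague convergence together with the a.s.\ continuity of $\gamma\mapsto\gamma(B_i)$ at sets $B_i$ whose boundary is $\sineb$-negligible — and then to argue that this subclass is convergence-determining among all bounded Borel local functions. For the latter density argument one invokes that the $\sigma$-algebra on $\Conf([-R,R])$ is generated by the counting maps $\gamma\mapsto\gamma(B)$ together with, on the event $\{\gamma([-R,R])=m\}$, a measurable identification of $\gamma_{[-R,R]}$ with an unordered $m$-tuple of points; on this event a bounded Borel function of the $m$ points is an $L^1$-limit (under $\sineb$) of finite linear combinations of products of indicator functions of the type above, so by a monotone-class / uniform-integrability argument the convergence extends.

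\textbf{Main obstacle.} The delicate point is the passage from ``Laplace functionals against smooth $\psi$'' to control of the actual number of points in a fixed window uniformly enough to pass bounded Borel local functions to the limit — in other words, ensuring no mass escapes through non-tightness of the counting variables $\gamma(B)$ along the sequence $Q_{n,\beta}$, and ruling out boundary effects. Concretely, one must know that the random variables $Q_{n,\beta}(B)$ are tight for each bounded Borel $B$; this is automatic from the vague convergence $Q_{n,\beta}\Rightarrow\sineb$ (which already gives tightness of the counting variables as part of the definition of convergence of point processes), but care is needed because $\phi$ is only assumed \emph{bounded} and Borel, not continuous — so one genuinely needs the two-step reduction above rather than a one-line portmanteau argument. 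I expect the bulk of the (short) proof to be exactly this reduction: (a) Laplace $\Rightarrow$ vague weak convergence $\Rightarrow$ convergence of finite-dimensional counting distributions at $\sineb$-continuity sets; (b) a monotone-class argument to extend from cylinder functions of counting variables to arbitrary bounded Borel local $\phi$. No new probabilistic input beyond what is quoted is required; the argument is soft.
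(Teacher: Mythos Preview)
Your plan correctly identifies that \eqref{convlaplace} yields weak (vague) convergence $Q_{n,\beta}\Rightarrow\sineb$, and you correctly flag that the real issue is upgrading this to convergence against bounded Borel (not merely continuous) local test functions. However, your step (b) --- the monotone-class/$L^1$-density argument --- does not go through. Approximating a bounded Borel $\phi$ on $\Conf(K)$ by cylinder functions in $L^1(\sineb)$ is not enough: to interchange the limits $n\to\infty$ and $\phi_k\to\phi$ you would need $\sup_n \E_{Q_{n,\beta}}|\phi-\phi_k|\to 0$, i.e.\ approximation in $L^1(Q_{n,\beta})$ \emph{uniformly in $n$}. Weak convergence gives no such control. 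Concretely, on the event $\{|\gamma_K|=m\}$ the conditional laws of $\gamma_K$ under $Q_{n,\beta}$ converge only weakly on $K^m/\mathfrak{S}_m$, and weak convergence on a compact metric space never upgrades for free to convergence against all bounded Borel functions (think of $\mu_n$ uniform on $\{k/n:1\le k\le n\}$ versus Lebesgue on $[0,1]$, tested against $\mathbf{1}_{\mathbb{Q}}$). The phrase ``uniform-integrability argument'' is a red herring here: there is nothing providing that uniformity.

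The paper closes exactly this gap with a different, quantitative input: the specific-entropy criterion of Georgii--Zessin \cite[Proposition 2.6]{GZ93}, which states that $\sup_n \frac{1}{n} I(Q_{n,\beta}\,|\,\Pi_{\La_n})<\infty$ implies relative compactness of $(Q_{n,\beta})$ in the local topology $\tau_L$. They verify this bound by an explicit computation using the known partition function \eqref{Partitionbeta} and the digamma asymptotics, obtaining \eqref{enttight}. Once $\tau_L$-compactness is in hand, any $\tau_L$-accumulation point is also a weak limit and hence equals $\sineb$, which finishes the proof. In short: the passage from weak to local convergence is not soft; it requires the entropy bound (or some equivalent uniform control on the laws of $\gamma_K$ under $Q_{n,\beta}$), which your outline does not supply.
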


\begin{proof} The convergence \eqref{convlaplace} implies the weak convergence   $Q_{n,\beta}\to\sineb,$ see \cite[Definition 1.2 and 1.3]{killip2009eigenvalue} and \cite[Proposition 11.1.VIII]{Dave08}.

Now, we show that the sequence   $(Q_{n,\beta})_{n \geq 1}$ has an accumulation point in the local topology. Indeed, consider the relative entropy of two point processes $P,Q$ defined by
 $$
 I(P|Q):=\int \log \frac{\d P}{\d Q} \,\d P
 $$ 
 when $P$ has a density with respect to $Q$ and set $I(P|Q):=+\infty$ otherwise. Let $\Pi_{\La_n}$ be  the Poisson point process of intensity $1$ on $\La_n.$ According to
 \cite[Proposition 2.6]{GZ93},   it is enough to check that
 \eq
 \label{enttight}
 \sup_{n \in \N^*} \frac{1}{n } I(Q_{n,\beta} | \Pi_{\La_n}) <\infty.
 \qe
\begin{eqnarray*}
  I(Q_{n,\beta} | \Pi_{\La_n})&  =  & \int \log \frac{\d Q_{n,\beta}}{\d \bv B_{n, \La_n}} \d Q_{n,\beta} + \int  \log \frac{\d \bv B_{n, \La_n}}{\d \Pi_{\La_n}} \d Q_{n,\beta} \\ 
  & = & - \log Z_{n,\beta} - \beta \,\E_{Q_{n,\beta}} \Big[\H^{n-\per}_{\La_n}(\gamma)\Big] - \log \left(\e^{-n}\frac{n^n}{n!}\right).
\end{eqnarray*}
Recalling \eqref{Partitionbeta} and using Stirling formula, we see that
\eq \label{Znlimit}
\lim_{n \rightarrow \infty} \frac{1}{n} \log Z_{n, \beta} -\frac\beta 2 \log \frac n{2\pi}
\qe
exists and is finite.
Moreover,
\begin{align*}
\E_{Q_{n,\beta}} \Big[\H^{n-\per}_{\La_n}(\gamma)\Big]  & =  \frac{1}{Z_{n, \beta}} \int \H_{\La_n}^{n-\per}(\gamma) \e^{-\beta \H_{\La_n}^{n-\per}(\gamma)}\bv B_{n, \La_n}(\d \gamma) \\
& = - \frac{\d}{\d\beta} \log Z_{n, \beta} = -\frac{\d}{\d\beta}\log\frac{\Gamma(\frac\beta 2 n+1)}{\Gamma(\frac \beta 2+1)^n}
\end{align*}
which yields that
\begin{multline*}
\frac 1 n \E_{Q_{n, \beta}}\left[\H_{\Lambda_n}^{n-\per}(\g)\right] + \frac{1}{2} \log\frac{n}{2\pi}= \\
- \frac{1}{2} \psi\left(\frac \beta 2 n+1\right) +  \frac{1}{2} \psi\left(\frac\beta 2+1\right) + \frac{1}{2} \log n -  \frac{1}{2}\log(2\pi),
\end{multline*}
with $\psi (x) :=  (\log\Gamma(x))^\prime$ the so-called digamma function.  Moreover using that,
 $\psi(x) = \log x - \frac{1}{2x} + o\left(\frac{1}{x}\right)$ as $x\to\infty$, see \cite[6.3.18]{abramowitz}, we obtain that  $\log n - \psi(\frac \beta 2 n+1)$ remains bounded. We get
\eq \label{EHlimit}
 \sup_{n} \left| \frac{1}{n} \E_{Q_{n, \beta}}\left[\H_{\Lambda_n}^{n-\per}(\g)\right] +  \frac{1}{2} \log\frac{n}{2\pi} \right| < + \infty.
\qe
Putting \eqref{Znlimit} and \eqref{EHlimit} together, we obtain \eqref{enttight}, which proves the claim. 

Finally, since the convergence in the local topology is stronger than the weak convergence of point processes, see  \cite[Section~2.1]{GZ93}, any accumulation point of $(Q_{n,\beta})_{n \geq 1}$ has to be $\sineb$. Moreover, \eqref{enttight} also provides that $\sineb$ is a point process, namely charges only simple configurations. Indeed, this yields that $\sineb$ has finite specific entropy and thus has local densities with respect to the Poisson process, see \cite[Chapter15]{Georgiibook}. The proof of the proposition is therefore complete.

\end{proof}

\subsection{From the finite, periodic DLR equations to Theorem \ref{theo:DLRweak} \ref{Thm2Cbis}}
To prove Theorem \ref{theo:DLRweak} \ref{Thm2Cbis}, namely that 
$$
\E_{\sineb}(f-f_{\La, \R}) = 0,
$$ 
we start from Proposition \ref{nperiodicDLR}
$$
\E_{\Qnbeta}(f -  f^{n-\per}_{\La, \La_n}) = 0,
$$
use the convergence of $\Qnbeta$ to $\sineb$ as expressed by Proposition \ref{prop:sineb}, and perform several approximations.

\begin{itemize}
\item [$\diamond$] First, we show that one can replace $ f^{n-\per}_{\La, \La_n}$ by $ f^{n-\per}_{\La, \La_p}$ in the DLR equations for $Q_{n,\beta}$. The contribution of the exterior configuration is indeed negligible when forgetting about the configuration in $\La_n\setminus\La_p$.

\item [$\diamond$] Next, we prove that one can further replace $f^{n-\per}_{\La, \La_p}$ by $f_{\La, \La_p}$, which means we can replace the periodic logarithmic interaction by the usual one up to negligible terms. 

\item [$\diamond$] Finally, we let $n\to\infty$ and replace $Q_{n,\beta}$ by $\sineb$, using  Proposition \ref{prop:sineb}. Moreover, we replace $f_{\La, \La_p}$ by $f_{\La, \R}$ in the remaining DLR equations.
\end{itemize}

This shall complete the proof of Theorem \ref{theo:DLRweak} up to the proof of technical  estimates on the discrepancy, which will be deferred to Section \ref{sec:disc}; preliminary material for proving these estimates is provided in Section \ref{sec:Energy}.

\subsubsection{Step 1: Truncation errors in the periodic DLR setting}
 
The DLR equations \eqref{DLRfinitemodel} obtained for $Q_{n,\beta}$ involve the Gibbs kernel $f_{\La,\La_n}^{n-\per}$, defined in \eqref{def:Llalapn}, where the index $n$ appears twice: as the period of the interaction, and as the size of the window. The following estimate allows us to decouple size and period. 

\begin{proposition}[Truncation error, periodic case]
\label{pperiodicDLR}
 Let $\varepsilon >0$.  For any $p$ large enough (depending on $\La, \epsilon$), for any $n \geq p$, and for any bounded measurable test function $f$ on $\Conf(\R)$, we have
 $$
 \left| \E_{ Q_{n,\beta}}(f^{n-\per}_{\La, \La_n} -  f^{n-\per}_{\La, \La_p})\right| \le \varepsilon \| f\|_\infty\, .
 $$
\end{proposition}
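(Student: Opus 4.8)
The plan is to compare the two Gibbs kernels $\L^{n-\per}_{\La,\La_n}(\d\eta,\g)$ and $\L^{n-\per}_{\La,\La_p}(\d\eta,\g)$ by controlling the difference of the move functions they are built from, namely
$$
\M^{n-\per}_{\La,\La_n}(\eta,\g) - \M^{n-\per}_{\La,\La_p}(\eta,\g)
= \iint g_n(x-y)\,\d(\eta_\La - \g_\La)\otimes\g_{\La_n\setminus\La_p}.
$$
Since $|\eta| = |\g_\La|$, the measure $\eta_\La - \g_\La$ has total mass zero, so this integral against $\g_{\La_n\setminus\La_p}$ picks up, for each exterior point $u\in\g_{\La_n\setminus\La_p}$, a term of the form $\sum_{x\in\eta}g_n(x-u) - \sum_{x\in\g_\La}g_n(x-u)$, which is a difference of values of $y\mapsto g_n(y-u)$ at points $x\in\La$. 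Because $u\in\La_n\setminus\La_p$ is at distance at least $\tfrac{p}{2}-\tfrac{|\La|}{2}$ from $\La$, and $g_n(x)=-\log|2\sin(\pi x/n)|$ has derivative $\tfrac\pi n\cot(\pi x/n)$, one gets a uniform bound $|g_n(x-u) - g_n(x'-u)| \le C_\La \cdot \tfrac{1}{n}\cot(\pi\,\mathrm{dist}(\La,\La_n\setminus\La_p)/n)$ for $x,x'\in\La$; for $u$ ranging over a window of width at most $n$ this cotangent factor is of order $\tfrac{1}{p/n}\cdot\tfrac1n \sim \tfrac1p$ in the worst (central) regime and decays away from $\La$. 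Summing over the exterior points $u$, the key quantity to control is therefore $\sum_{u\in\g_{\La_n\setminus\La_p}} h_n(\mathrm{dist}(u,\La))$ for an explicit kernel $h_n$ that is $O(1/p)$ near the boundary and summable at infinity (on the torus scale $n$).

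Next I would turn this into a bound on $|f^{n-\per}_{\La,\La_n}(\g) - f^{n-\per}_{\La,\La_p}(\g)|$. Since both kernels are of the form $(\text{const})\,\e^{-\beta(\H_\La(\eta)+\M)}\,\bv B_{|\g_\La|,\La}(\d\eta)$ with the same reference Bernoulli measure and the same self-energy $\H_\La(\eta)$, and they differ only through $\M$, if $\|\M^{n-\per}_{\La,\La_n}(\cdot,\g) - \M^{n-\per}_{\La,\La_p}(\cdot,\g)\|_\infty \le \delta(\g)$ then an elementary estimate on normalized exponential tilts gives $|f^{n-\per}_{\La,\La_n}(\g) - f^{n-\per}_{\La,\La_p}(\g)| \le 2\beta\,\delta(\g)\,e^{2\beta\delta(\g)}\,\|f\|_\infty$, which is $\le C\beta\,\delta(\g)\,\|f\|_\infty$ once $\delta(\g)\le 1$, say. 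So it remains to show that $\E_{Q_{n,\beta}}$ of the random quantity $\delta(\g) = C_\La\sum_{u\in\g_{\La_n\setminus\La_p}}h_n(\mathrm{dist}(u,\La))$ can be made $\le \varepsilon/\beta C$ by choosing $p$ large, uniformly in $n\ge p$. Writing $h_n(\mathrm{dist}(u,\La))\le \sum_{k\ge p/2}c_k \mathbf{1}_{u\in I_k}$ where $I_k$ are dyadic-type annuli around $\La$ and $c_k = O(k^{-1}\wedge (\text{torus correction}))$, this reduces to controlling $\E_{Q_{n,\beta}}[\,|\g_{I_k}|\,]$, i.e. the expected number of points of the circular-ensemble process in boxes of size $|I_k|$ at distance $\sim k$ from the origin. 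This is exactly where the \emph{discrepancy estimates} announced for Section \ref{sec:disc} enter: they will furnish $\E_{Q_{n,\beta}}[\,|\g_{I_k}|\,] \le |I_k| + (\text{lower order})$, uniformly in $n$, so that $\E_{Q_{n,\beta}}[\delta(\g)] \lesssim \sum_{k\ge p/2} c_k |I_k|$, a tail of a convergent series (the $1/k$ decay of $c_k$ beats the $O(1)$ growth of $|I_k|$ after the logarithmic/telescoping structure coming from the zero-mass cancellation is used), hence $\to 0$ as $p\to\infty$ uniformly in $n$.

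The main obstacle is precisely the last point: obtaining a bound on $\E_{Q_{n,\beta}}[\delta(\g)]$ that is uniform in $n\ge p$ and genuinely small as $p\to\infty$. The naive bound $|g_n(x-u)-g_n(x'-u)|\le C/\mathrm{dist}(u,\La)$ gives a kernel decaying only like $1/k$, and $\sum_k (1/k)\cdot \E[|\g_{I_k}|]$ with $\E[|\g_{I_k}|]\approx |I_k|$ is only logarithmically divergent, not convergent — so one cannot afford to estimate the contribution of each exterior point independently. One must exploit the neutrality $|\eta|=|\g_\La|$ a second time, at the level of summing over $u$: replacing $g_n(x-u)$ by $g_n(x-u) - g_n(x_0-u)$ for a fixed basepoint $x_0\in\La$ costs nothing but improves the per-point decay to $\mathrm{dist}(u,\La)^{-2}$ (a discrete second difference of $\log$), after which $\sum_k k^{-2}\,\E_{Q_{n,\beta}}[\,|\g_{I_k}|\,]$ converges, with tail $O(1/p)$, uniformly in $n$ thanks to the discrepancy bounds. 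Packaging this — keeping careful track of the torus geometry so that the periodic kernel $g_n$ behaves like $-\log$ on the relevant scales and the ``distance'' is the torus distance — is the technical heart of the argument; everything else (the tilt estimate, the reduction to expected discrepancies) is routine.
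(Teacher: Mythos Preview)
Your overall architecture is right: the two Gibbs kernels differ only through the move functions, so a uniform bound $\delta(\g)$ on $|\M^{n-\per}_{\La,\La_n}(\eta,\g)-\M^{n-\per}_{\La,\La_p}(\eta,\g)|$ turns into a bound $C(e^{2\beta\delta}-1)\|f\|_\infty$ on $|f^{n-\per}_{\La,\La_n}-f^{n-\per}_{\La,\La_p}|$, and one then needs $\delta(\g)$ small with high $Q_{n,\beta}$-probability, via the discrepancy estimates of Section~\ref{sec:disc}. You also correctly identify the obstacle: the per-point decay $|\Psi_n(u)|\lesssim 1/\mathrm{dist}(u,\La)$ gives only a logarithmically divergent series.

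However, your proposed fix is where the argument breaks. Subtracting a basepoint, i.e.\ writing $g_n(x-u)-g_n(x_0-u)$, is already implicit in the zero-mass condition $|\eta|=|\g_\La|$ and does \emph{not} upgrade the decay to $1/\mathrm{dist}(u,\La)^2$. Taylor-expanding gives
\[
\Psi_n(u)=g_n'(x_0-u)\Big(\textstyle\sum_{x\in\eta}x-\sum_{x\in\g_\La}x\Big)+O\big(|\g_\La|\,|\La|^2/\mathrm{dist}(u,\La)^2\big),
\]
and the dipole term in front is $O(1/\mathrm{dist}(u,\La))$ with a coefficient (the difference of barycenters) that has no reason to vanish. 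So $\sum_k k^{-2}\E[|\g_{I_k}|]$ is not what you are actually summing; you are still stuck with $\sum_k k^{-1}$.

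The paper's cure is different and has two ingredients you are missing. First, one subtracts the \emph{background}: write $\int_{\La_n\setminus\La_p}\Psi_n\,\d\g=\int_{\La_n\setminus\La_p}\Psi_n\,\d(\g-\Leb)+\int_{\La_n\setminus\La_p}\Psi_n\,\d\Leb$; the Lebesgue piece is handled directly (Lemma~\ref{le:estimove}(b)). Second, on the discrepancy piece one performs a \emph{summation by parts}: $\sum_j\Psi_n(j/2)\Discr_{I_j}$ becomes boundary terms plus $\sum_j(\Psi_n(j/2)-\Psi_n((j{+}1)/2))\Discr_{(0,(j{+}1)/2]}$. Now the kernel involves $\Psi_n'$, which genuinely decays like $1/j^2$ (Lemma~\ref{le:estimove}(c)), while the cumulative discrepancies satisfy $\E_{Q_{n,\beta}}\big[|\Discr_{(0,j/2]}|\big]\le C\sqrt{j}$ by Lemma~\ref{le:disc}. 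The resulting series $\sum_j j^{-3/2}$ converges, with tail $O(p^{-1/2})$ uniformly in $n\ge p$. It is the combination ``background subtraction $+$ Abel summation $+$ $L^2$ discrepancy bound'' that closes the gap, not an improved pointwise decay of $\Psi_n$.
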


The only difference between $f^{n-\per}_{\La, \La_n}$ and $f^{n-\per}_{\La, \La_p}$ lies in the size of the exterior configuration that is taken into account. To prove Proposition \ref{pperiodicDLR}, we thus need to control the difference of periodic move functions over different large windows. This motivates the following definition.
\begin{definition}[Configurations with small truncation error]
\label{def:smalltruncationfinite}
For any $\delta >0,$ we denote by $\A^{n-\per}_{\La, \La_p}(\delta)$ the set
\begin{equation}
\label{def:AnLaLap}
\A^{n-\per}_{\La, \La_p}(\delta) := \left\{ \g\in\Conf(\R) :\; \sup_{
\begin{subarray}{c}
\eta \in \Conf(\La)\\ 
|\eta|=| \gamma_\La|
\end{subarray} }
 \left| \M^{n-\per}_{\La, \La_n}(\eta, \g) - \M^{n-\per}_{\La, \La_p}(\eta, \gamma) \right| \leq \delta\right\}.
\end{equation}
 \end{definition}
In plain words, if $\gamma$ belongs to $\A^{n-\per}_{\La, \La_p}(\delta)$, then we can change $\gamma_\Lambda$ into any other configuration $\eta$ in $\Lambda$ \textit{with the same number of points}, and the energy cost of this operation as felt by the points in $\Lambda_n \backslash \Lambda_p$ is always less than $\delta$. 
The main ingredient for the proof of Proposition \ref{pperiodicDLR} is that $\A^{n-\per}_{\La, \La_p}(\delta)$ has large  probability under $Q_{n, \beta}$.
\begin{lemma}[The truncation error is often small, finite case]
\label{le:move} For any $\varepsilon, \delta >0,$ we have
 $$ 
 Q_{n, \beta}\left(\,{\A^{n-\per}_{\La,\La_p}(\delta)}\,\right)\geq 1- \varepsilon
 $$
 provided that $p\leq n$ are large enough, depending on $\varepsilon, \delta, \La$.
\end{lemma}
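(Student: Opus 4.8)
The plan is to combine a second‑order Taylor expansion of the periodic interaction with discrepancy bounds for $Q_{n,\beta}$. Fix $L>0$ with $\La\subset[-L,L]$, and for $\eta\in\Conf(\La)$ with $|\eta|=|\gamma_\La|=:N$ put $\mu:=\eta_\La-\gamma_\La$, a signed measure of total mass zero supported in $[-L,L]$ with $\|\mu\|\le 2N$. Since $\La\subset\La_p\subset\La_n$,
\[
\M^{n-\per}_{\La,\La_n}(\eta,\gamma)-\M^{n-\per}_{\La,\La_p}(\eta,\gamma)=\sum_{y\in\gamma_{\La_n\setminus\La_p}}\int g_n(x-y)\,\mu(\d x).
\]
For each such $y$ one has $p/2<|y|\le n/2$, so on the support of $\mu$ the point $-y$ stays at periodic distance at least $|y|-L$ from $n\Z$, where $g_n$ is smooth with $|g_n''|\le C(|y|-L)^{-2}$. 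Taylor‑expanding $x\mapsto g_n(x-y)$ at $x=0$, using $\mu(\R)=0$ to kill the constant term, and noting that the first‑order coefficient $\int x\,\mu(\d x)$ does \emph{not} depend on $y$, I would get the deterministic bound, uniform over admissible $\eta$,
\[
\sup_{|\eta|=N}\Big|\M^{n-\per}_{\La,\La_n}(\eta,\gamma)-\M^{n-\per}_{\La,\La_p}(\eta,\gamma)\Big|\le 2LN\,\Big|\sum_{y\in\gamma_{\La_n\setminus\La_p}}g_n'(-y)\Big|+CL^2N\sum_{y\in\gamma_{\La_n\setminus\La_p}}\frac1{(|y|-L)^2}.
\]
It then suffices to show that, for $p\le n$ large, each term on the right is small with $Q_{n,\beta}$‑probability at least $1-\varepsilon/3$.

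The last two random quantities are handled crudely. Since C$\beta$E is rotation invariant, $Q_{n,\beta}$ is invariant under rotations of the circle $\La_n$ and has unit intensity; hence $\E_{Q_{n,\beta}}|\gamma_\La|=|\La|$, so $N\le N_0$ off an event of probability $\le\varepsilon/3$ for some $N_0=N_0(\varepsilon,\La)$, by Markov. Likewise $\E_{Q_{n,\beta}}\big[\sum_{y\in\gamma_{\La_n\setminus\La_p}}(|y|-L)^{-2}\big]=\int_{\La_n\setminus\La_p}(|t|-L)^{-2}\,\d t\le 4/(p-2L)$, which tends to $0$ as $p\to\infty$ uniformly in $n$, so Markov controls this term for $p$ large.

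The remaining ``barycentric'' sum $\sum_{y\in\gamma_{\La_n\setminus\La_p}}g_n'(-y)$ is the crux. Since $|g_n'(-y)|\asymp|y|^{-1}$, bounding term by term only gives $\asymp\log(n/p)$, which does not vanish — this is exactly the long‑range tail of the logarithm. The key observation is an exact cancellation: $t\mapsto g_n'(-t)$ is odd and $\La_n\setminus\La_p$ is symmetric about $0$, so $\int_{\La_n\setminus\La_p}g_n'(-t)\,\d t=0$, whence $\sum_{y\in\gamma_{\La_n\setminus\La_p}}g_n'(-y)=\int_{\La_n\setminus\La_p}g_n'(-t)\,(\gamma(\d t)-\d t)$, a genuine fluctuation. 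I would estimate it by integrating by parts against the discrepancy function $\bar D(s)=(\gamma-\Leb)\big((p/2,s]\big)$ on $[p/2,n/2]$ (and symmetrically on $[-n/2,-p/2]$): the boundary term at $\pm n/2$ vanishes because $g_n'(\mp n/2)=0$, leaving $\int\bar D(t)\,g_n''(-t)\,\d t$. On a dyadic decomposition into shells $[2^k,2^{k+1}]$ one has $\int_{[2^k,2^{k+1}]}|g_n''(-t)|\,\d t\le C2^{-k}$, while the discrepancy estimates of Section~\ref{sec:disc} bound $\E_{Q_{n,\beta}}[\sup|\bar D|]$ on a scale‑$2^k$ window by a power of $k$, uniformly in $n$; summing gives $\E_{Q_{n,\beta}}\big|\sum_{y}g_n'(-y)\big|\le a(p)$ with $a(p)\to0$ as $p\to\infty$, uniformly in $n\ge p$. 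A final use of Markov makes $2LN_0\big|\sum_yg_n'(-y)\big|\le\delta/2$ off an event of probability $\le\varepsilon/3$, for $p$ large. Intersecting the three events yields $Q_{n,\beta}\big(\A^{n-\per}_{\La,\La_p}(\delta)\big)\ge1-\varepsilon$, all thresholds depending only on $\varepsilon,\delta,\La$.

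The main obstacle is precisely this uniform‑in‑$n$ control of the barycentric term: without the oddness/symmetry cancellation its deterministic part alone has size $\log(n/p)$, and once removed one must tame a fluctuating linear statistic against a slowly decaying kernel, for which the scale‑by‑scale discrepancy bounds of Section~\ref{sec:disc} — summed against $|g_n''|\lesssim t^{-2}$ — are exactly what is needed. Everything else (the Taylor expansion and the two Markov estimates) is routine.
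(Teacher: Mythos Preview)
Your strategy is essentially the paper's: subtract the Lebesgue background so that the move-function difference becomes a genuine fluctuation, then integrate by parts against the discrepancy and invoke the $L^2$ bound $\E_{Q_{n,\beta}}[\Discr_I^2]\le C_\beta|I|$ of Lemma~\ref{le:disc}. The paper packages this a bit differently---it works directly with $\Psi_n=g_n*(\eta_\La-\gamma_\La)$ and the $W_1$-bounds of Lemma~\ref{le:estimove} rather than a second-order Taylor expansion, and removes the background via the identity $g_n*\Leb_{\La_n}=0$ on $\La_n$ rather than via the oddness of $g_n'$---but the substance is the same, and your observation that $g_n'(\pm n/2)=0$ kills the boundary term is a pleasant bonus.

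One point needs repair: Section~\ref{sec:disc} gives only $\E_{Q_{n,\beta}}[\Discr_I^2]\le C_\beta|I|$ for \emph{fixed} intervals $I$, not a bound on $\E[\sup_{t}|\bar D(t)|]$ over a dyadic window, so your ``power of $k$'' claim is not available as stated. You do not need the sup, however: take the expectation inside the integral and use $\E_{Q_{n,\beta}}|\bar D(t)|\le\sqrt{C_\beta\,t}$ pointwise, which integrates against $|g_n''(-t)|\le C t^{-2}$ on $[p/2,n/2]$ to give $\E_{Q_{n,\beta}}\big|\sum_y g_n'(-y)\big|\le C'\,p^{-1/2}$ uniformly in $n\ge p$. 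This is exactly what the paper does after its summation by parts (with $\sum_j j^{-2}\sqrt{j}$ in place of $\int t^{-2}\sqrt{t}\,\d t$).
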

Roughly speaking “the far exterior does not count”. This would be obvious for a short-range interaction, but in the case of the logarithm we need to show that some effective cancellations occur. The proof of Lemma \ref{le:move}  is deferred to Section \ref{subsection:lemove}, and we now prove Proposition \ref{pperiodicDLR}, using Lemma \ref{le:move}

\begin{proof}[Proof of Proposition \ref{pperiodicDLR}]
For any $\g\in\A^{n-\per}_{\La, \La_p}(\delta)$ and $\eta\in\Conf(\La)$ satisfying $|\eta|=|\gamma_\La|$,
  $$ 
  |  Z_{\La, \La_p}^{n-\per}( \g) - Z_{\La, \La_n}^{n-\per}( \g) | \le (\e^{\beta \delta } -1)  Z_{\La, \La_p}^{n-\per}( \g).
  $$ 
  Then, writing $\La^c:=\La_n\setminus\La$ for convenience, we have for any $\g\in\A^{n-\per}_{\La, \La_p}(\delta)$,
\begin{align*}
& \left|f^{n-\per}_{\La, \La_n}(\g) -  f^{n-\per}_{\La, \La_p}(\g) \right| \\
&  = \left | \int f(\eta \cup \g_{\La^c}) \, \L_{\La, \La_n}^{n-\per}(\d \eta, \g)-\int f(\eta\cup\g_{\La^c}) \, \L_{\La, \La_p}^{n-\per}(\d \eta, \g)  \right|  \\
& \le   \left | \int f(\eta \cup \g_{\La^c}) \left(\e^{- \beta (\M_{\La, \La_n}^{n-\per}(\eta, \gamma)-\M_{\La, \La_p}^{n-\per}(\eta, \gamma))}-1 \right) \, \L_{\La, \La_p}^{n-\per}(\d \eta, \g) \right|\\
& \qquad+ \left| \int  f(\eta\cup\g_{\La^c}) \left( \frac{Z^{n-\per}_{\La,\La_p}(\g)-Z^{n-\per}_{\La, \La_n}(\g)}{Z^{n-\per}_{\La, \La_n}(\g)} \right) \, \L_{\La, \La_n}^{n-\per}(\d \eta, \g)\right| \\
&
 \le  (\e^{\beta \delta } -1) \int |f(\eta \cup \g_{\La^c})| \L^{n-\per}_{\La, \La_p}(\d\eta, \g) + (\e^{2\beta \delta } -1)  \int |f(\eta \cup \g_{\La^c})| \L^{n-\per}_{\La,\La_n}(\d\eta, \g) 
 \\& \le  2(\e^{\beta \delta } -1)\|f\|_\infty.
\end{align*}
Given  $\varepsilon >0$, assume that $\delta$ satisfies $2(\e^{\beta \delta}-1)\leq \varepsilon$ and assume further that $n\geq p$ are large enough so that $ Q_{n, \beta}({\A^{n-\per}_{\La,\La_p}(\delta)})\geq 1- \varepsilon$, which is possible thanks to Lemma \ref{le:move}. By using that $\|f_{\La, \La_p}^{n-\per}\|_\infty\leq \|f\|_\infty$ for any $p\leq n$,  we thus obtain
\begin{align*}
 \left| \E_{Q_{n,\beta}}(f_{\La, \La_n}^{n-\per} -  f_{\La, \La_p}^{n-\per})\right|  & \le  \left| \E_{Q_{n,\beta}}\big((f_{\La, \La_n}^{n-\per} -  f_{\La, \La_p}^{n-\per})\bv 1_{\A^{n}_{\La, \La_p}(\delta)}\big)\right| \\
 & \qquad +  2\|f\|_\infty  \Qnbeta\left(\Conf(\R)\setminus \A^{n}_{\La, \La_p}(\delta)\right) \\
 & \le  3\varepsilon \|f\|_\infty
 \end{align*}
 and the lemma follows since $\epsilon$ is arbitrary.
\end{proof}

\subsubsection{Step 2: From periodic to non-periodic interaction}
Next, we show that one can replace the periodic potential $g_n$ by the logarithmic potential $g$ at a small cost.

\begin{proposition}[From periodic to non-periodic potentials]
\label{unper}
Let $\varepsilon >0.$ We have
 $$ 
 \left| \E_{Q_ {n,\beta}}(f_{\La, \La_p} -  f^{n-\per}_{\La, \La_p})\right| \le \varepsilon \| f\|_\infty,
 $$
 provided that $p$ is large enough (depending on $\La, \varepsilon$) and $n\geq p$ is large enough (depending on $p$).
\end{proposition}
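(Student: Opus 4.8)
The plan is to compare the kernels $\L_{\La,\La_p}(\cdot\,,\g)$ and $\L^{n-\per}_{\La,\La_p}(\cdot\,,\g)$ pathwise, exploiting that both of them resample a configuration $\eta$ in $\La$ with the \emph{same} number of points $|\eta|=|\g_\La|$. These kernels differ only through the replacement of $g$ by $g_n$ in the interaction energy $\H_\La$ and in the move function $\M_{\La,\La_p}$. For $z\neq 0$,
\[
g_n(z)-g(z)=\log\frac{|z|}{2|\sin(\pi z/n)|}=\log\frac{n}{2\pi}+r_n(z),\qquad r_n(z):=\log\frac{\pi|z|/n}{|\sin(\pi z/n)|},
\]
and an elementary estimate ($u/\sin u=1+O(u^2)$ near $0$) gives $0\le r_n(z)\le C\,p^2/n^2$ uniformly for $|z|\le p$ once $n\ge 2p$. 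So $g_n-g$ is \emph{not} small — it carries the divergent constant $\log\frac n{2\pi}$ — but that constant turns out to be harmless here, for two complementary reasons. In $\H^{n-\per}_\La(\eta)-\H_\La(\eta)$ it contributes only the term $\tfrac12\log\frac n{2\pi}\,N(N-1)$ with $N:=|\g_\La|$, i.e.\ an $\eta$-independent constant that cancels in the normalization $Z^{n-\per}_{\La,\La_p}(\g)$ of $\L^{n-\per}_{\La,\La_p}$. In $\M^{n-\per}_{\La,\La_p}(\eta,\g)-\M_{\La,\La_p}(\eta,\g)$ it is integrated against the signed measure $(\eta_\La-\g_\La)\otimes\g_{\La_p\setminus\La}$, whose first marginal $\eta_\La-\g_\La$ has total mass zero precisely because $|\eta|=|\g_\La|$, so the constant simply disappears.

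Carrying out this bookkeeping, one gets
\[
\bigl(\H^{n-\per}_\La(\eta)+\M^{n-\per}_{\La,\La_p}(\eta,\g)\bigr)-\bigl(\H_\La(\eta)+\M_{\La,\La_p}(\eta,\g)\bigr)=\tfrac12\log\tfrac n{2\pi}\,N(N-1)+E_n(\eta,\g),
\]
where $E_n$ is a finite signed sum of terms $r_n(x-y)$ over pairs of points lying in $\La_p$, so that
\[
|E_n(\eta,\g)|\le \frac{Cp^2}{n^2}\Bigl(\tfrac12|\g_\La|^2+2\,|\g_\La|\,|\g_{\La_p\setminus\La}|\Bigr)
\]
uniformly over all $\eta$ with $|\eta|=|\g_\La|$. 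Since $\L^{n-\per}_{\La,\La_p}(\cdot\,,\g)$ only samples such $\eta$, the constant $\tfrac12\log\frac n{2\pi}N(N-1)$ does not survive in it, and $\L^{n-\per}_{\La,\La_p}(\cdot\,,\g)$ is obtained from $\L_{\La,\La_p}(\cdot\,,\g)$ by reweighting with $\e^{-\beta E_n(\eta,\g)}$ and renormalizing. The elementary comparison of Gibbs kernels used in the proof of Proposition~\ref{pperiodicDLR} then gives, on the event where $|E_n(\eta,\g)|\le\delta$ for every admissible $\eta$, a bound of the form $\bigl|f_{\La,\La_p}(\g)-f^{n-\per}_{\La,\La_p}(\g)\bigr|\le 2(\e^{2\beta\delta}-1)\,\|f\|_\infty$.

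It remains to control $|\g_\La|$ and $|\g_{\La_p\setminus\La}|$ under $\Qnbeta$. Since $\Qnbeta$ is translation-invariant on the rescaled circle $\La_n$, it has intensity $1$, hence $\E_{\Qnbeta}[|\g_\La|]=|\La|$ and $\E_{\Qnbeta}[|\g_{\La_p\setminus\La}|]\le p$; Markov's inequality then yields $M_1=M_1(\La,\varepsilon)$ and $M_2=M_2(p,\varepsilon)$ with $\Qnbeta(G_{n,p})\ge 1-\varepsilon$ for all $n\ge p$, where $G_{n,p}:=\{|\g_\La|\le M_1,\ |\g_{\La_p\setminus\La}|\le M_2\}$ (alternatively, one may invoke the discrepancy estimates of Section~\ref{sec:disc}). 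On $G_{n,p}$ one has $|E_n(\eta,\g)|\le\delta_n:=\frac{Cp^2}{n^2}(\tfrac12 M_1^2+2M_1M_2)$, and $\delta_n\to0$ as $n\to\infty$ with $p,\varepsilon$ fixed. Splitting $\E_{\Qnbeta}|f_{\La,\La_p}-f^{n-\per}_{\La,\La_p}|$ over $G_{n,p}$ and its complement and using $\|f_{\La,\La_p}\|_\infty,\|f^{n-\per}_{\La,\La_p}\|_\infty\le\|f\|_\infty$ yields the bound $\bigl(2(\e^{2\beta\delta_n}-1)+2\varepsilon\bigr)\|f\|_\infty$; taking $n$ large enough that $2(\e^{2\beta\delta_n}-1)\le\varepsilon$ concludes, since $\varepsilon$ is arbitrary.

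The step I expect to be the real crux is the exact cancellation of the divergent $\log\frac n{2\pi}$ contributions: this is precisely where the \emph{canonical} (fixed number of points) nature of the equations is indispensable, since the charge neutrality $|\eta|=|\g_\La|$ both kills the constant in the move function and turns it into an absorbable constant in the interaction energy — without it the periodic and non-periodic kernels would genuinely differ. Once that is isolated, the residual error $E_n$ is honestly of order $p^2/n^2$ times a squared point count, and the only quantitative input needed is the (very mild) uniform-in-$n$ tightness of $|\g_\La|$ and $|\g_{\La_p\setminus\La}|$ under $\Qnbeta$.
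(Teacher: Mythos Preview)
Your proof is correct and follows essentially the same approach as the paper's: decompose $g_n-g$ into the divergent constant $\log\frac{n}{2\pi}$ plus a residual $r_n=O(p^2/n^2)$, observe that the constant cancels in $\M_{\La,\La_p}$ by charge neutrality $|\eta|=|\g_\La|$ and is $\eta$-independent in $\H_\La$ (hence absorbed by the partition function), then bound the residual on an event of controlled point counts and conclude as in Proposition~\ref{pperiodicDLR}. The only cosmetic difference is that the paper packages the good event as $\mathsf{B}_p=\{|\g_\La|\le p,\ |\g_{\La_p}|\le p^2\}$ via Lemma~\ref{le:Bp}, whereas you use generic thresholds $M_1,M_2$ from Markov's inequality with the unit intensity of $\Qnbeta$; both are equivalent for the purpose at hand.
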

This is fairly intuitive: in the Gibbs kernel $f_{\La, \La_p}$, all the interactions take place between points that are at distance at most $p$. The precise value of the period $n$ of the interaction is thus not really important, because for $|x| \leq p \ll n$, we have
$$
\log \left| \sin(\frac{\pi x}{n}) \right| \approx \log \left| \frac{\pi x}{n} \right| \approx \log |x|,
$$
up to an additive constant in the energy, which is irrelevant for a Gibbs specification. The only issue is that this approximation comes with a certain negligible cost \textit{for each pair of points}, so the main ingredient that we will use in the proof of Proposition \ref{unper} is the fact that it is unlikely under $Q_{n, \beta}$ to have too many points in a given bounded set. 
More precisely, if we set 
$$ 
\mathsf{B}_{p} := \left\{\gamma\in\Conf(\R) :\; |\gamma_\La| \le  p, \; |\g_{\La_p}| \le p^2\right\},
$$
the following estimate holds true.

\begin{lemma}[No overcrowding]
\label{le:Bp} For any $\varepsilon >0,$ for $p$ large enough (depending on $\La, \epsilon$), and for $n \geq p$, we have
 $$ 
 Q_{n, \beta}\left(\,\mathsf{B}_{p}\,\right)\geq 1- \varepsilon.
 $$
\end{lemma}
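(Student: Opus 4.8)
The plan is to prove Lemma~\ref{le:Bp}, the ``No overcrowding'' estimate, by controlling separately the two events $\{|\gamma_\La|\le p\}$ and $\{|\gamma_{\La_p}|\le p^2\}$ under $Q_{n,\beta}$, and then taking a union bound. Since $\La\subset\La_p$, the second event implies a control on the first as soon as $p\ge p^2/(\text{something})$... actually it is cleaner to bound both directly. The key quantity to control is the discrepancy $|\gamma_{B}|-|B|$ between the number of points of a typical configuration in a bounded set $B$ and its Lebesgue measure; these discrepancy estimates are the technical ingredient announced in Section~\ref{sec:disc}, so for the purposes of this plan I will assume a bound of the form
\[
Q_{n,\beta}\Big(\big|\,|\gamma_{B}|-|B|\,\big|\ge t\Big)\le C\,\e^{-c\,t}
\]
(or a similar polynomial/exponential tail) valid uniformly in $n\ge p$ for $B$ a fixed bounded set or an interval of length $\le p$; such estimates are standard for $\beta$-ensembles and, in the log-gas setting, follow from energy/entropy comparison arguments of the type developed in \cite{Leble:2017mz}.

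First I would fix $\La$ and note that $|\La|$ is a constant; choosing $p$ large enough that $p\ge 2|\La|$, the event $\{|\gamma_\La|>p\}$ is contained in $\{\,|\,|\gamma_\La|-|\La|\,|> p/2\,\}$, whose $Q_{n,\beta}$-probability is at most $C\e^{-cp/2}$ by the discrepancy estimate applied to $B=\La$, uniformly in $n\ge p$. Next, for the event $\{|\gamma_{\La_p}|>p^2\}$, observe that $|\La_p|=p$, so for $p$ large this is contained in $\{\,|\,|\gamma_{\La_p}|-|\La_p|\,|>p^2-p\ge p^2/2\,\}$, whose probability is at most $C\e^{-cp^2/2}$ by the discrepancy estimate applied to the interval $B=\La_p$ (one must check the discrepancy bound is indeed uniform in $n\ge p$ when the window has size $p$ growing with $p$ but staying below $n$; this is exactly the kind of statement the discrepancy section is designed to provide, possibly at the cost of a weaker but still summable tail). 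A union bound then gives
\[
Q_{n,\beta}\big(\Conf(\R)\setminus\mathsf B_p\big)\le C\e^{-cp/2}+C\e^{-cp^2/2},
\]
which is $\le\varepsilon$ for $p$ large enough depending on $\La$ and $\varepsilon$, uniformly in $n\ge p$, as required.

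The main obstacle I anticipate is obtaining the discrepancy bound for the growing window $\La_p=[-p/2,p/2]$ with a constant $c$ (or at least a tail that decays fast enough in $p$) that does not degrade as the period $n$ of the interaction grows, i.e.\ uniformity in $n\ge p$. For a fixed bounded set $\La$ this is routine, but for an interval of length comparable to $p$ inside a circular ensemble of size $n$ one needs either a concentration inequality for linear statistics of the number of points (available via the loop equation / transport techniques, or via the SDE description of $\sineb$ in the limit together with a finite-$n$ comparison) or a direct energy-lower-bound argument showing that having order $p^2$ points in an interval of length $p$ costs an exponentially small probability. The paper evidently isolates this into Lemma-level discrepancy statements proven in Section~\ref{sec:disc} using the renormalized-energy machinery of Section~\ref{sec:Energy}; granting those, the proof of Lemma~\ref{le:Bp} is the short union-bound argument sketched above.
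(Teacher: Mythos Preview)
Your overall architecture---union bound on the complement of $\mathsf B_p$, then control each event via a discrepancy estimate uniform in $n\ge p$---is exactly the paper's. The only real difference is in the \emph{strength} of the discrepancy input you assume. You posit an exponential tail $Q_{n,\beta}(|\Discr_B|\ge t)\le C\e^{-ct}$; the paper neither proves nor needs this. What Section~\ref{sec:Energy} actually delivers (Lemma~\ref{le:disc}) is the second-moment bound
\[
\E_{Q_{n,\beta}}\big[|\gamma_B|^2\big]\le 2|B|\big(C_\beta+|B|\big),
\]
uniformly in $n$ once $B\subset\La_n$. From this, Markov's inequality gives
\[
Q_{n,\beta}(|\gamma_\La|>p)\le \frac{\E[|\gamma_\La|^2]}{p^2}\le \frac{C_\La}{p^2},
\qquad
Q_{n,\beta}(|\gamma_{\La_p}|>p^2)\le \frac{\E[|\gamma_{\La_p}|^2]}{p^4}\le \frac{C}{p^2},
\]
and the union bound finishes. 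So your worry about uniformity in $n$ for the growing window $\La_p$ is resolved directly by the fact that the constant in Lemma~\ref{le:disc} depends only on $\beta$, not on $n$; no concentration inequality, loop equation, or SDE input is needed. Your sketch is correct but over-engineered: replacing the hypothetical exponential tail by the available second-moment bound plus Chebyshev yields the paper's two-line proof.
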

The proof of Lemma \ref{le:Bp}  is deferred to Section \ref{sec:proofleBp}, and we now prove Proposition \ref{unper} using Lemma \ref{le:Bp}.

\begin{proof}[Proof of Proposition \ref{unper}] Let $\delta>0$.
For any $n$ large enough (depending on $\delta$ and $p$) we have, for any $x\neq y \in \Lambda_p,$
$$ 
\left|g_n(x-y) + \log \frac{2\pi}n -g(x-y)\right| = \left|\log \left|\frac{2 \sin \frac{\pi(x-y)}{n}}{\frac{2\pi(x-y)}{n}}\right|\right| \le \frac{\delta}{p^3}.
$$
Let $\g\in\mathsf{B}_p$ and  $\eta \in\Conf(\La)$  satisfying $|\eta|=|\gamma_\La|$ and assume they are simple.  Then, for $n$ large enough (depending on $\delta, p$),  
\begin{align}
\label{aproxper-paper}
\left| \H^{n-\per}_{\La}(\eta) +\frac{1}{2} |\gamma_\La|(|\g_\La|-1) \log \frac{2\pi}{n} - \H_{\La}(\eta) \right| & \le  \frac{\delta}{p^3} \int\d \g_\La^{\otimes 2} \le  \frac{\delta}{p} \leq \delta\\
\label{aproxper-paper2}
\left| \M^{n-\per}_{\La, \La_p}(\eta, \g) - \M_{\La, \La_p}(\eta, \gamma) \right| & \leq  \frac{\delta}{p^3} 2 |\g_\La| \int\d \g_{\La_p\setminus \La} \le 2\delta,
\end{align}
where we used for the second string of inequalities that $|\eta|=|\gamma_\La|.$ It follows,
$$
\big|Z^{n-\per}_{\La, \La_p}( \g)\, \e^{-\frac\beta 2  |\gamma_\La|(|\g_\La|-1) \log \frac{2\pi}{n}} -  Z_{\La, \La_p}( \g)\big| \le (\e^{3\beta\delta}-1)Z_{\La,\La_p}(\g).$$
Moreover, by writing 
\begin{multline*}
  Z_{\La,\La_p}^{n-\per}(\g)\, \e^{-\frac\beta 2 |\gamma_\La|(|\g_\La|-1) \log \frac{2\pi}{n}}f^{n-\per}_{\La, \La_p}(\g)\\=\int f(\eta\cup\gamma_\La)\e^{-\beta(\H_\La^{n-\per}(\eta)+ \frac{1}{2} |\gamma_\La|(|\g_\La|-1) \log \frac{2\pi}{n}+\M^{n-\per}_{\La,\La_p}(\eta,\gamma))} \bv B_{|\g_\La|, \La}(\d \eta)
\end{multline*}
and using again \eqref{aproxper-paper}--\eqref{aproxper-paper2} together with Lemma \ref{le:Bp}, the proposition is obtained by following the same lines than in the proof of Proposition \ref{pperiodicDLR}.
\end{proof}

\subsubsection{Step 3: Truncation errors in the infinite DLR setting}
The results of this section are valid not only for $\sineb$, but for any stationary point process $P$ on $\R$ with finite expected renormalized energy $\E_P[\W(\gamma)]$. We refer the reader to Section \ref{sec:Energy} for a precise definition but for now it is enough to keep in mind that $\E_{\sineb}[\W(\gamma)]<\infty.$ 

\begin{lemma}[Definiteness of the move functions, infinite case]
\label{lem:definimoveinf} Let $P$ be a stationary point process on $\R$ satisfying $\E_P[\W(\gamma)]<\infty$. Then, for $P$-a.e. $\g\in\Conf(\R)$ and every $\eta\in \Conf(\La)$ satisfying $|\eta|=|\gamma_\La|$, the limit
$$
\M_{\La,\R}(\eta,\gamma)=\lim_{p\to\infty}\M_{\La,\La_p}(\eta,\gamma)
$$
exists and is finite, and the convergence is uniform for such $\eta$'s. 
\end{lemma}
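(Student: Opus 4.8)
The plan is to prove Lemma~\ref{lem:definimoveinf} by writing the difference of move functions over two nested windows as a sum of ``dipole-like'' contributions and controlling it via the expected renormalized energy. Concretely, for $q>p$ large enough that $\La\subset\La_p$, and for $\eta\in\Conf(\La)$ with $|\eta|=|\gamma_\La|$, we have
\eq
\M_{\La,\La_q}(\eta,\gamma)-\M_{\La,\La_p}(\eta,\gamma)=\iint g(x-y)\,\d(\eta_\La-\gamma_\La)\otimes\gamma_{\La_q\setminus\La_p},
\qe
and since $|\eta_\La|=|\gamma_\La|$, the signed measure $\eta_\La-\gamma_\La$ has total mass zero. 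I would exploit this by fixing an arbitrary reference point, say the origin or the center of $\La$, and rewriting the inner integral over $x$ using $g(x-u)=g(-u)+\big(g(x-u)-g(-u)\big)$: the constant term $g(-u)$ integrates to zero against $\eta_\La-\gamma_\La$, so only the increment $g(x-u)-g(-u)=-\log|1-x/u|$ survives. For $|u|$ large and $x$ ranging over the fixed bounded set $\La$, this increment is $O(|x|/|u|)=O(\diam(\La)/|u|)$, uniformly in $x\in\La$ and uniformly in $\eta$ (because $\eta$ lives in $\La$ too). Thus the whole difference is bounded, uniformly in $\eta$, by a constant times $\sum_{u\in\gamma_{\La_q\setminus\La_p}}1/|u|$, which shrinks the problem to showing that $\sum_{u\in\gamma,\,|u|>p}1/|u|$ is finite (and hence Cauchy) for $P$-a.e.\ $\gamma$.

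The remaining point is therefore a purely one-point estimate: under a stationary point process $P$ with $\E_P[\W(\gamma)]<\infty$, the sum $\sum_{u\in\gamma}\1_{|u|>1}/|u|$ is $P$-a.s.\ finite — equivalently, $\sum_k k^{-1}\,\gamma(\La_{k+1}\setminus\La_k)<\infty$ a.s. Naively, stationarity alone only gives $\E_P[\gamma(\La_{k+1}\setminus\La_k)]$ constant, so $\sum_k k^{-1}\E_P[\gamma(\La_{k+1}\setminus\La_k)]=\infty$ and a crude first-moment argument fails; this is the main obstacle. The resolution is to use the finiteness of the expected renormalized energy, which controls the discrepancies $\Discr_k(\gamma):=\gamma(\La_{2^k})-|\La_{2^k}|$ much more tightly than first moments do — morally $\E_P[\Discr_k^2]=o(4^k)$ or at least a quantitative control of $\E_P[\W]$ translates (via the material of Section~\ref{sec:Energy} and the discrepancy estimates of Section~\ref{sec:disc}) into $\sum_k k^{-1}|\Discr_k|<\infty$ a.s. Writing $\gamma(\La_{k+1}\setminus\La_k)=|\La_{k+1}\setminus\La_k|+(\Discr_{k+1}-\Discr_k)$ and summing against $1/|u|\sim 1/k$, the ``bulk'' part $\sum_k k^{-1}|\La_{k+1}\setminus\La_k|\sim\sum_k k^{-1}\cdot 1$ is of course divergent term-by-term, but it does \emph{not} appear in our estimate: recall that the increment $-\log|1-x/u|$ integrated against the \emph{zero-mass} measure $\eta_\La-\gamma_\La$ is what we are summing, and a second-order Taylor expansion gives $-\log|1-x/u|=-x/u+O(|x|^2/|u|^2)$ with $x$ replaced by the (zero-total-mass!) combination, so in fact the leading $1/u$ term, being the barycenter displacement times $\sum 1/u$, must be handled one notch more carefully.

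So the sharper route I would actually take is: expand to second order, $-\log|1-x/u|=-x/u-\tfrac12 (x/u)^2+\cdots$, and split the sum over $u$ accordingly. The quadratic-and-higher terms contribute $O\big(\diam(\La)^2\sum_{|u|>p}|u|^{-2}\big)$, which is uniformly summable in $q$ and goes to $0$ as $p\to\infty$ provided $\sum_{u\in\gamma}|u|^{-2}<\infty$ a.s.; this weaker statement \emph{does} follow from first moments and stationarity, since $\E_P[\sum_{u\in\gamma,|u|>1}|u|^{-2}]=\E_P[\gamma([0,1])]\sum_k O(k^{-2})<\infty$. For the linear term, we use that it equals $-\big(\int x\,\d(\eta_\La-\gamma_\La)\big)\sum_{u\in\gamma_{\La_q\setminus\La_p}}u^{-1}$; the prefactor is bounded by $\diam(\La)\,|\gamma_\La|$, so it suffices to show $\sum_{u\in\gamma}u^{-1}$ (note the \emph{signed} $1/u$, not $1/|u|$) converges as we include more and more points, which is exactly where the discrepancy estimates and finite renormalized energy enter: $\sum_{k\le|u|<k+1}u^{-1}$ telescopes against $\Discr$ and the symmetric cancellation between $+u$ and $-u$ sides leaves a convergent series thanks to the $P$-a.s.\ control $|\Discr_k(\gamma)|=o(k)$, in fact summably so after dividing by $k$. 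Assembling: the quadratic remainder $\to 0$ uniformly in $\eta$ as $p\to\infty$, and the linear part is Cauchy, so $(\M_{\La,\La_p}(\eta,\gamma))_p$ is uniformly Cauchy in $\eta$ for $P$-a.e.\ $\gamma$, and its limit $\M_{\La,\R}(\eta,\gamma)$ is finite; the uniformity is automatic since every bound produced is independent of $\eta$ once $|\eta|=|\gamma_\La|$ and $\eta\subset\La$. I expect the genuinely delicate step to be making precise the passage from ``$\E_P[\W]<\infty$'' to the a.s.\ summable discrepancy control needed for the signed linear term — this is presumably where the deferred estimates of Sections~\ref{sec:Energy} and~\ref{sec:disc} are invoked, and I would cite them rather than reprove them here.
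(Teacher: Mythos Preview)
Your approach is correct and reaches the conclusion by a different but equivalent organization. The paper first subtracts a Lebesgue background, introducing $\Move_{\La,\La_p}(\eta,\gamma):=\int_{\La_p\setminus\La}\Psi\,\d(\gamma-\Leb)$ with $\Psi=g*(\eta_\La-\gamma_\La)$; the background contribution $\int\Psi\,\d\Leb_{\La_p\setminus\La}$ is disposed of separately via Lemma~\ref{le:estimove}(a), and the remaining signed-measure part is discretized into unit intervals $I_j$ and handled by Abel summation by parts, producing exactly the same error term $\Err_{m,p}(\gamma)$ as in the periodic finite-volume case (Lemma~\ref{le:move}), which is then shown to vanish $P$-a.s.\ using Lemmas~\ref{lem:energytodiscr} and~\ref{lem:Discrvers0}. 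Your Taylor expansion of $-\log|1-x/u|$ into a linear term (barycenter displacement times the \emph{signed} sum $\sum_{u}u^{-1}$ over the symmetric annulus $\La_q\setminus\La_p$) plus a quadratic remainder ($O(\sum|u|^{-2})$, absolutely summable by a straight first-moment argument) is an equivalent repackaging: the ``symmetric cancellation'' you invoke for the linear term is precisely what the paper obtains by background subtraction, since $\int_{\La_q\setminus\La_p}u^{-1}\,\d u=0$, and your integration-by-parts on $\sum u^{-1}$ against the counting function reproduces the same discrepancy terms. Your route is slightly more transparent about the mechanism of cancellation (zero total mass plus symmetry of the window) and avoids introducing the auxiliary $\Move$; the paper's has the advantage of reusing verbatim the machinery already set up for the finite periodic case. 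One small clarification of wording: your ``summably so after dividing by $k$'' must be read as $\sum_k|\Discr_{(0,k]}(\gamma)|/k^2<\infty$ a.s., which does follow from $\E_P[|\Discr_{(0,k]}|]\leq C\sqrt{k}$ (Lemma~\ref{lem:energytodiscr}); the stronger $\sum_k|\Discr_{(0,k]}|/k<\infty$ is neither available nor needed.
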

Lemma \ref{lem:definimoveinf} is proven in Section \ref{sec:proofdefinimoveinf}

We now state a result concerning the truncation error in the infinite, non-periodic setting. 
\begin{proposition}[Truncation error, infinite setting]
\label{truncinfinite}
Let $P$ be a stationary point process on $\R$ satisfying $\E_P[\W(\gamma)]<\infty$ and let $\varepsilon >0.$ For any $p$  large enough (depending on $\La, \varepsilon$, and $P$), for any bounded measurable function $f$, we have
 $$ 
 \left| \E_{P}(f_{\La, \R} -  f_{\La, \La_p})\right| \le \varepsilon \| f\|_\infty\, .
 $$
\end{proposition}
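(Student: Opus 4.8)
The plan is to mimic the proof of Proposition~\ref{pperiodicDLR}, but working directly in the infinite, non-periodic setting and using the uniform convergence of the truncated move functions from Lemma~\ref{lem:definimoveinf} in place of Lemma~\ref{le:move}. Concretely, fix $\varepsilon>0$. By Lemma~\ref{lem:definimoveinf}, for $P$-a.e.\ $\gamma$ the limit $\M_{\La,\R}(\eta,\gamma)=\lim_{p\to\infty}\M_{\La,\La_p}(\eta,\gamma)$ exists and is finite, and the convergence is uniform over $\eta\in\Conf(\La)$ with $|\eta|=|\gamma_\La|$. Hence, given $\delta>0$, one can define the event
\eq
\label{def:AinfLaLap}
\A_{\La,\La_p}(\delta):=\left\{\gamma\in\Conf(\R):\;\sup_{\substack{\eta\in\Conf(\La)\\ |\eta|=|\gamma_\La|}}\big|\M_{\La,\R}(\eta,\gamma)-\M_{\La,\La_p}(\eta,\gamma)\big|\leq\delta\right\},
\qe
and the uniform convergence means precisely that, for $P$-a.e.\ $\gamma$, we have $\gamma\in\A_{\La,\La_p}(\delta)$ for all $p$ large enough depending on $\gamma$. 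By monotone convergence in $p$, this yields $P(\A_{\La,\La_p}(\delta))\to1$ as $p\to\infty$, so one can fix $p$ large enough (depending on $\La$, $\varepsilon$, $\delta$ and $P$) such that $P(\Conf(\R)\setminus\A_{\La,\La_p}(\delta))\leq\varepsilon$.

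Next I would carry out the same deterministic estimate as in the proof of Proposition~\ref{pperiodicDLR}, but comparing the window $\La_p$ with the ``infinite window''. For $\gamma\in\A_{\La,\La_p}(\delta)$ one has $|Z_{\La,\La_p}(\gamma)-Z_{\La,\R}(\gamma)|\leq(\e^{\beta\delta}-1)Z_{\La,\La_p}(\gamma)$, since the difference $\M_{\La,\R}(\eta,\gamma)-\M_{\La,\La_p}(\eta,\gamma)$ is bounded by $\delta$ in absolute value uniformly over the relevant $\eta$'s (recall that $Z_{\La,\La_p}(\gamma)\in(0,\infty)$ for all $p$, and $Z_{\La,\R}(\gamma)\in(0,\infty)$ by the argument in the proof of Theorem~\ref{theo:DLRweak}\ref{Thm2Bbis}). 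Writing, for $\gamma\in\A_{\La,\La_p}(\delta)$,
\begin{align*}
&\left|f_{\La,\R}(\gamma)-f_{\La,\La_p}(\gamma)\right|\\
&\quad\leq\left|\int f(\eta\cup\gamma_{\La^c})\left(\e^{-\beta(\M_{\La,\R}(\eta,\gamma)-\M_{\La,\La_p}(\eta,\gamma))}-1\right)\L_{\La,\La_p}(\d\eta,\gamma)\right|\\
&\qquad+\left|\int f(\eta\cup\gamma_{\La^c})\left(\frac{Z_{\La,\La_p}(\gamma)-Z_{\La,\R}(\gamma)}{Z_{\La,\R}(\gamma)}\right)\L_{\La,\R}(\d\eta,\gamma)\right|\\
&\quad\leq(\e^{\beta\delta}-1)\|f\|_\infty+(\e^{2\beta\delta}-1)\|f\|_\infty\leq 2(\e^{\beta\delta}-1)\|f\|_\infty,
\end{align*}
where one should be slightly careful that $f(\eta\cup\gamma_{\La^c})$ is well-defined for $P$-a.e.\ $\gamma$ (its $\La_p\setminus\La$-part agrees with $f(\eta\cup\gamma_{\La_p\setminus\La})$ only up to the contribution of the far exterior, but since $f$ is bounded this causes no trouble in the bound; alternatively one uses $|f_{\La,\La_p}|\leq\|f\|_\infty$ and $|f_{\La,\R}|\leq\|f\|_\infty$ directly). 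Then, choosing $\delta$ so that $2(\e^{\beta\delta}-1)\leq\varepsilon$ and $p$ so that $P(\Conf(\R)\setminus\A_{\La,\La_p}(\delta))\leq\varepsilon$, one gets
\begin{align*}
\left|\E_P(f_{\La,\R}-f_{\La,\La_p})\right|
&\leq\E_P\big(|f_{\La,\R}-f_{\La,\La_p}|\,\bv 1_{\A_{\La,\La_p}(\delta)}\big)+2\|f\|_\infty\,P\big(\Conf(\R)\setminus\A_{\La,\La_p}(\delta)\big)\\
&\leq\varepsilon\|f\|_\infty+2\varepsilon\|f\|_\infty=3\varepsilon\|f\|_\infty,
\end{align*}
and since $\varepsilon$ is arbitrary (after relabelling $\varepsilon\to\varepsilon/3$) the proposition follows.

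The only genuinely non-trivial input here is Lemma~\ref{lem:definimoveinf}, which is proven separately; granting it, the argument is a routine repetition of the estimate already used for Proposition~\ref{pperiodicDLR}. The mild subtlety I would want to be careful about is the measurability of $\gamma\mapsto f_{\La,\R}(\gamma)$ and of the event $\A_{\La,\La_p}(\delta)$, and the passage from ``$\gamma\in\A_{\La,\La_p}(\delta)$ eventually in $p$, $P$-a.s.'' to ``$P(\A_{\La,\La_p}(\delta))\to1$'', which is just continuity of measure along the increasing (in $p$) events $\bigcap_{q\geq p}\A_{\La,\La_q}(\delta)$; I would note that $\A_{\La,\La_p}(\delta)$ itself need not be monotone in $p$, but the liminf version is, which suffices. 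No new idea beyond Lemma~\ref{lem:definimoveinf} is required.
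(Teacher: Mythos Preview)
Your proof is correct and follows essentially the same route as the paper: define the event $\A_{\La,\La_p}(\delta)$, show it has high $P$-probability, and then repeat verbatim the deterministic comparison of Gibbs kernels from Proposition~\ref{pperiodicDLR}. The paper packages the probability bound as a separate statement, Lemma~\ref{lem:truncerrorinfinitecase}, and proves it by redoing the quantitative discrepancy estimates of Lemma~\ref{le:move} in the non-periodic setting; you instead obtain $P(\A_{\La,\La_p}(\delta))\to1$ by a soft argument directly from the almost-sure uniform convergence in Lemma~\ref{lem:definimoveinf} (via $\bigcap_{q\ge p}\A_{\La,\La_q}(\delta)\uparrow$), which is a legitimate and slightly more economical way to reach the same conclusion.

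One small point worth tightening: in your displayed comparison you write $f(\eta\cup\gamma_{\La^c})$ inside both integrals, whereas by definition $f_{\La,\La_p}(\gamma)$ involves $f(\eta\cup\gamma_{\La_p\setminus\La})$. In the paper's setup $f$ is declared at the outset of the section to be a bounded Borel \emph{local} function, so for $p$ large enough (containing the support of $f$) the two expressions coincide and the issue disappears; your parenthetical remark gestures at this but it would be cleaner to simply invoke locality of $f$ and take $p$ beyond its support.
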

Proposition \ref{truncinfinite} should be compared to Proposition \ref{pperiodicDLR}. As for the proof of the latter, we rely on a result saying that the truncation error is often small.

The following definition is the counterpart of Definition \ref{def:smalltruncationfinite} in the infinite setting.
\begin{definition}[Infinite configurations with small truncation error]
\label{def:smalltruncationinfinite}
For any $\delta >0$,  set
$$ 
\A_{\La, \La_p}(\delta) := \left\{ \g\in\Conf(\R):\;\sup_{
\begin{subarray}{c}
\eta \in\Conf(\La)\\ 
|\eta|=| \gamma_\La|
\end{subarray} }
 \left| \M_{\La, \R}(\eta, \g) - \M_{\La, \La_p}(\eta, \gamma) \right| \leq \delta\right\}.$$
\end{definition}

\begin{lemma}[The truncation error is often small, infinite case]
\label{lem:truncerrorinfinitecase}
 For any $\varepsilon, \delta >0,$ we have for every $p$ large enough (depending on $P, \varepsilon, \delta, \La$),
 \eq
 \label{PAD}
 P\left(\,{\A_{\La,\La_p}(\delta)}\,\right)\geq 1- \varepsilon.
 \qe
\end{lemma}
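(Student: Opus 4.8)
The plan is to deduce this lemma, by a soft dominated-convergence argument, from the uniform convergence already contained in Lemma~\ref{lem:definimoveinf}. For $p$ large enough that $\La\subset\La_p$, introduce the truncation error
\[
D_p(\gamma):=\sup_{\substack{\eta\in\Conf(\La)\\ |\eta|=|\gamma_\La|}}\big|\M_{\La,\R}(\eta,\gamma)-\M_{\La,\La_p}(\eta,\gamma)\big|,
\]
so that, by definition, $\A_{\La,\La_p}(\delta)=\{\gamma:\ D_p(\gamma)\le\delta\}$. Since $\E_P[\W(\gamma)]<\infty$ by hypothesis, Lemma~\ref{lem:definimoveinf} applies: for $P$-a.e.\ $\gamma$, the quantity $\M_{\La,\R}(\eta,\gamma)$ is well defined and finite, and $\M_{\La,\La_p}(\eta,\gamma)\to\M_{\La,\R}(\eta,\gamma)$ uniformly over all $\eta\in\Conf(\La)$ with $|\eta|=|\gamma_\La|$. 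This uniformity is precisely the statement that $D_p(\gamma)\to0$ as $p\to\infty$, for $P$-a.e.\ $\gamma$. (That $D_p$, hence $\A_{\La,\La_p}(\delta)$, is measurable is routine: for fixed $\gamma$ with $|\gamma_\La|=N$ the map $(y_1,\dots,y_N)\in\La^N\mapsto\M_{\La,\La_p}(\{y_1,\dots,y_N\},\gamma)$ is a finite sum of continuous functions — the arguments $y_j-u$ with $u\in\gamma_{\La_p\setminus\La}$ never vanishing, since $\La$ and $\La_p\setminus\La$ are disjoint — so its uniform limit $\M_{\La,\R}(\cdot,\gamma)$ is continuous too, and the supremum may be restricted to a fixed countable dense subset of $\La^N$, turning $D_p$ into a countable supremum of measurable functions on each of the measurable events $\{|\gamma_\La|=N\}$.)

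With this in hand the conclusion is immediate. For $P$-a.e.\ $\gamma$ one has $D_p(\gamma)\to0$, so $D_p(\gamma)\le\delta$ for all $p$ large enough (depending on $\gamma$), hence $\1_{\{D_p\le\delta\}}(\gamma)\to1$; since these indicators are bounded by $1$, dominated convergence yields
\[
P\big(\A_{\La,\La_p}(\delta)\big)=\E_P\big[\1_{\{D_p\le\delta\}}\big]\ \xrightarrow[p\to\infty]{}\ 1 .
\]
Therefore there is a threshold $p_0$ — depending on $P$ (through the a.s.\ speed of convergence in Lemma~\ref{lem:definimoveinf}), on $\varepsilon$, on $\delta$, and on $\La$ — such that $P(\A_{\La,\La_p}(\delta))\ge1-\varepsilon$ for every $p\ge p_0$, which is exactly the estimate \eqref{PAD}. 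Note that we do not need any monotonicity of $\A_{\La,\La_p}(\delta)$ in $p$: dominated convergence on the bounded indicators suffices.

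I expect the only genuinely hard point to lie upstream, in Lemma~\ref{lem:definimoveinf} itself: proving that the truncated move functions converge, uniformly in $\eta$, as $p\to\infty$, which is where the long-range cancellations of the logarithmic interaction and the finiteness of the expected renormalized energy $\E_P[\W]$ must be exploited. Once that input is granted, the present lemma is purely soft. This is in contrast with the finite-volume analogue, Lemma~\ref{le:move}, whose proof is deferred to Section~\ref{subsection:lemove} precisely because no such ready-made infinite-volume convergence is available there, so the required cancellations have to be produced directly under $Q_{n,\beta}$ (using the discrepancy and energy estimates of Sections~\ref{sec:Energy}--\ref{sec:disc}).
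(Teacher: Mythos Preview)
Your proof is correct. The route differs from the paper's: the paper proves Lemmas~\ref{lem:definimoveinf} and~\ref{lem:truncerrorinfinitecase} together in Section~\ref{sec:proofdefinimoveinf}, first obtaining the explicit pointwise bound
\[
\big|\Move_{\La,\R}(\gamma,\eta)-\Move_{\La,\La_p}(\gamma,\eta)\big|\le |\gamma_\La|\,|\La|\,\Err_{\infty,p}(\gamma),
\]
and then establishing \eqref{PAD} by re-running the Markov-inequality argument of Lemma~\ref{le:move} (split off $\{|\gamma_\La|>L\}$, bound $\E_P[\Err_{\infty,p}]$ via the discrepancy estimates of Lemma~\ref{lem:energytodiscr}). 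You instead take Lemma~\ref{lem:definimoveinf} as a black box and observe that the uniform convergence it already asserts is exactly $D_p(\gamma)\to0$ $P$-a.s., so dominated convergence on the indicators finishes the job. Your argument is shorter and purely soft once Lemma~\ref{lem:definimoveinf} is granted; the paper's version has the advantage of making explicit that the same discrepancy machinery drives both the finite and the infinite cases, and would in principle give a quantitative rate. Your closing remark correctly identifies where the real work lies.
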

Lemma \ref{lem:truncerrorinfinitecase} is the counterpart of Lemma \ref{le:move} in the infinite, non-periodic setting. Its proof is postponed to Section \ref{subsection:lemoveinf}.

\begin{proof}[Proof of Proposition \ref{truncinfinite}] Using Lemma \ref{lem:truncerrorinfinitecase}, the proposition is obtained exactly as in the proof of Proposition \ref{pperiodicDLR}.
\end{proof}

\subsubsection{Proof of the canonical DLR equations}
We may now give the proof of the canonical DLR equations for $\sineb$.
\begin{proof}[Proof of Theorem \ref{theo:DLRweak} \ref{Thm2Cbis}] 

Let $\varepsilon >0$ and $f$ be a bounded Borel local function on $\Conf(\R).$ We write,

\begin{align}
\tag{A}
 \left|\E_{\sineb}\left(f-f_{\La, \R}\right)\right|  & \leq \; \left| \E_{\sineb} (f) -\E_{Q_{n,\beta}}(f)\right|\\
 \tag{B}
  & \quad + \left| \E_{Q_{n,\beta}}(f - f^{n-\per}_{\La, \La_n})\right| \\
  \tag{C}
    &\quad + \left| \E_{Q_{n,\beta}} (f_{\La, \La_n}^{n-\per}) - \E_{Q_{n,\beta}} (f_{\La, \La_p}^{n-\per})\right|\\
      \tag{D}
          &\quad + \left| \E_{Q_{n,\beta}} (f_{\La, \La_p}^{n-\per}) - \E_{Q_{n,\beta}}(f_{\La, \La_p})\right|\\
          \tag{E}
  &\quad + \left| \E_{Q_{n,\beta}} (f_{\La, \La_p}) - \E_{\sineb} (f_{\La, \La_p})\right|\\
  \tag{F}
 & \quad+ \big|\E_{\sineb}( f_{\La, \La_p}-f_{\La, \R}) \big|. 
\end{align}
Proposition~\ref{nperiodicDLR} states that $\text{(B)}=0$, Proposition~\ref{pperiodicDLR} that $\text{(C)}\leq \epsilon\|f\|_\infty$, Proposition~\ref{unper} that $\text{(D)}\leq \epsilon\|f\|_\infty$, and Proposition~\ref{truncinfinite} that  $\text{(F)}\leq \epsilon\|f\|_\infty$, provided that $p$  is chosen large enough (depending on $\La, \epsilon$) and $n$ is large enough (depending on $p$). Moreover, since $f$ and $f_{\La, \La_p}$ are both bounded Borel local functions, it follows from Proposition~\ref{prop:sineb}, that $\text{(A)}$ and $\text{(E)}$ can be made arbitrary small provided that $n$ is large enough (depending on $f, \La, p$), and the proof is complete under the extra assumption that $f$ is a local function. 

In order to extend the result to arbitrary, possibly non local, bounded Borel functions, we proceed as follows. Let $\mathcal{M}$ be the class of all measurable events $A$ such that $\mathbf{1}_{A}$ satisfies the DLR equations, and let $\Pi$ be the class of all measurable events $A$ which are local in the sense that $\mathbf{1}_A$ is a local function as above. So far, we have proven that $\Pi \subset \mathcal{M}$. We want to prove that $\mathcal{M}$ is the whole Borel $\sigma$-algebra. The set $\Pi$ is clearly stable under finite intersections. Moreover, we can check that $\mathcal{M}$ is a monotone class, using monotone convergence and the linearity of DLR equations. By the monotone class theorem, $\mathcal{M}$ contains $\sigma(\Pi)$, the $\sigma$-algebra generated by $\Pi$.  Next, consider the countable collection of open sets $\{\g\in\Conf(\R) : \;|\g\cap(a,b)|<c\}_{a,b,c\in\mathbb Q}\subset\Pi$, which generate the topology of $\Conf(\R)$. Since this collection is countable the $\sigma$-algebra it generates, which is included in $\sigma(\Pi)$, is the whole Borel $\sigma$-algebra. This finally shows that $\mathcal M$ is the whole Borel $\sigma$-algebra of $\Conf(\R)$, and we have thus obtained the DLR equations for any indicator function of a Borel subset of $\Conf(\R)$, which by linearity of the DLR equations extends to every simple function, and finally, by density, to every bounded measurable function.
\end{proof}

\subsection{Renormalized energy and discrepancy estimates}
\label{sec:Energy}

\subsubsection{Renormalized energy}
We gather here the definition of the \textit{renormalized energy},  which is a way to define the logarithmic energy of an infinite point configuration, and some useful properties. A first version of this object was introduced by \cite{sandier2012ginzburg} but we use here the variation introduced in \cite{petrache2017next}.  In the present work, we do not work directly with the energy, we mostly make use of the connection between  the renormalized energy and discrepancy estimates, as explained in the next paragraph.

The following definitions can be found, with more details and justification for existence,  e.g. in \cite[Section 2.6]{Leble:2017mz}.

\begin{definition}[Compatible electric fields]
 For any  $\g\in\Conf(\R)$, a vector field $E:\R^2\to\R^2$ is said to be \textit{an electric field compatible with $\g$}, and we write $E\in\Comp(\gamma)$, if it satisfies:
$$ - \div E = 2\pi (\g - \delta_\R),$$
in the sense of distributions, where by definition, the action of the measure $\delta_{\R}$ on a smooth and compactly supported function $\phi:\R^2\to\R$  is $\int_\R \varphi(\cdot,0) \d x$.
\end{definition}

\begin{definition}[Renormalized energy  of an infinite point configuration] Given any configuration $\gamma\in\Conf(\R)$ and $E\in\Comp(\gamma)$, 
we first consider for any $\eta \in (0,1)$ the \emph{regularized field $E_\eta$}:   we set for any $z=(x,y) \in\R^2,$  
$$ E_\eta(z) := E(z) + \sum_{p \in \g} \nabla f_\eta(x-p,y), \quad \text{ where } f_\eta(z):=\bs 1_{|z|\geq \eta} \log\left|\frac z\eta\right| \text{ for } z\in\R^2.$$
Then, the \emph{renormalized energy of $\gamma$} is defined by
$$  \W(\gamma) := \inf_{E\in\Comp(\gamma)}\left\{\lim_{\eta\to 0}\left(\limsup_{R \rightarrow \infty} \frac 1 R \int_{[-R,R]\times \R} |E_\eta(x,y)|^2 \d x\d y \right)+ 2\pi \log\eta \right\}.$$ 
\end{definition}

For a periodic configuration, the renormalized energy can be computed explicitly in terms of the periodic logarithmic energy of the configuration in a fundamental domain.
\begin{proposition}[Energy of a periodic point configuration] 
\label{wperiodic}
 Given a configuration $\g$ of $n$ distinct points $\g:=\{\g_1, \ldots, \g_n\}\in\Conf(\La_n)$, let $\g^{n-\per}\in\Conf(\R)$ be the $n$-periodic configuration defined by 
 $$
 \g^{n-\per}:=\bigcup_{k_1,\ldots,k_n\in\Z}\big\{\g_1+k_1n,\ldots,\g_n+k_nn\big\}.
 $$ 
 Then, we have
 $$ 
\W(\g^{n-\per}) =   \frac{\pi}n\left( 2\H_{\Lambda_n}^{n-\per}(\g)+n\log\frac{n}{2\pi}\right).
 $$
\end{proposition}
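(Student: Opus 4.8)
The plan is to use the fact that, for a periodic configuration, the infimum in the definition of $\W$ is attained at the (essentially unique) periodic compatible electric field, and then to evaluate that energy through an integration by parts over a single fundamental domain.

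First I would recall, citing \cite{sandier2012ginzburg, petrache2017next, Leble:2017mz}, that for the $n$-periodic configuration $\g^{n-\per}$ the infimum defining $\W(\g^{n-\per})$ is achieved: any two $n$-periodic fields in $\Comp(\g^{n-\per})$ differ by an $n$-periodic divergence-free field decaying as $|y|\to\infty$, hence coincide, and a symmetrization argument shows no non-periodic competitor does strictly better, so the minimizer is this unique periodic field $E_\g$. It can be written explicitly as $E_\g=\nabla H_\g$, where
\[
H_\g(x,y):=\sum_{i=1}^n G_n\bigl((x,y)-(\g_i,0)\bigr)+\pi|y|,\qquad G_n(x,y):=-\log\Bigl|2\sin\tfrac{\pi(x+\mathrm iy)}{n}\Bigr|,
\]
and $G_n$ is the Coulomb kernel on the cylinder $(\R/n\Z)\times\R$: it is harmonic off $0$, satisfies $-\Delta G_n=2\pi\delta_0$ there and $G_n\sim-\tfrac{\pi|y|}{n}$ as $|y|\to\infty$, so that $-\div E_\g=2\pi(\g^{n-\per}-\delta_\R)$ and $E_\g$ decays exponentially in $|y|$. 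Two facts I would single out are the local expansion $G_n(z)=-\log|z|+\log\tfrac n{2\pi}+O(|z|^2)$ near $0$, and the boundary value $H_\g(x,0)=\sum_{i=1}^n g_n(x-\g_i)$.

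Next comes the computation. By $n$-periodicity in $x$ the spatial average $\limsup_{R\to\infty}\frac1R\int_{[-R,R]\times\R}|E_{\g,\eta}|^2$ reduces, up to the conventional normalization, to an average of $\int_{[-n/2,n/2]\times\R}|E_{\g,\eta}|^2$; moreover $E_{\g,\eta}$ coincides with $E_\g$ outside the discs $B(\g_i,\eta)$ and the integral of $|E_{\g,\eta}|^2$ over those discs is $o(1)$ as $\eta\to0$. So one is left to compute $\int_{D_\eta}|\nabla H_\g|^2$ with $D_\eta:=\bigl([-n/2,n/2]\times\R\bigr)\setminus\bigcup_i B(\g_i,\eta)$, via $\int_{D_\eta}|\nabla H_\g|^2=\int_{\partial D_\eta}H_\g\,\partial_\nu H_\g-\int_{D_\eta}H_\g\,\Delta H_\g$. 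On $D_\eta$ one has $\Delta H_\g=\Delta(\pi|y|)=2\pi\delta_\R$, so the bulk term $-\int_{D_\eta}H_\g\,\Delta H_\g$ converges to $-2\pi\int_{-n/2}^{n/2}H_\g(x,0)\,\d x=-2\pi\sum_i\int_{-n/2}^{n/2}g_n(x-\g_i)\,\d x=0$, using the classical identity $\int_0^n\log\bigl|2\sin\tfrac{\pi x}{n}\bigr|\,\d x=0$. On $\partial D_\eta$ the two vertical sides cancel by periodicity and the contribution near $y=\pm\infty$ vanishes by the decay of $E_\g$, leaving the circles $\partial B(\g_i,\eta)$; plugging in the local expansion of $G_n$, each such circle contributes a term $\propto -\log\eta$ (a logarithmic divergence, cancelled by the renormalization term $2\pi\log\eta$ since the point density is $1$) plus the finite part $\log\tfrac n{2\pi}+\sum_{j\neq i}g_n(\g_i-\g_j)$.

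Summing over $i$ and using $\sum_i\sum_{j\neq i}g_n(\g_i-\g_j)=\iint g_n(x-y)\,\d\g_{\Lambda_n}^{\otimes2}=2\H^{n-\per}_{\Lambda_n}(\g)$, then collecting the remaining constants with the correct normalization, yields $\W(\g^{n-\per})=\frac{\pi}{n}\bigl(2\H^{n-\per}_{\Lambda_n}(\g)+n\log\tfrac n{2\pi}\bigr)$. The main obstacle is the first step — establishing that the periodic field is the minimizer among all compatible fields, not merely a competitor giving the right upper bound; this is where the substance of the renormalized-energy theory (screening/lower-bound arguments, or convexity) enters, and in practice one invokes the corresponding statement from \cite{Leble:2017mz} or \cite{sandier2012ginzburg}. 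The remainder is a routine but careful computation, the delicate points being the uniformity of the $\eta\to0$ limit and the bookkeeping of the various $2\pi$ normalization factors.
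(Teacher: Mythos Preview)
Your sketch is correct in its main lines, and in fact it is essentially the argument carried out in the references the paper cites: the paper itself does not prove this proposition but simply refers to \cite[Proposition~1.5, $d=1$]{petrache2017next} and \cite[Proposition~2.10]{BS}. Those works proceed exactly as you describe --- identify the periodic compatible field via the cylinder Green's function $G_n(z)=-\log\bigl|2\sin\tfrac{\pi z}{n}\bigr|$, integrate by parts on a fundamental domain with the small discs removed, use the local expansion $G_n(z)=-\log|z|+\log\tfrac{n}{2\pi}+O(|z|^2)$ to extract the finite part, and invoke $\int_0^n\log\bigl|2\sin\tfrac{\pi x}{n}\bigr|\,\d x=0$ to kill the background term. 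So you have effectively reconstructed the cited proof rather than taken a different route. Your own assessment of the one genuine difficulty is also accurate: showing that the periodic field realizes the infimum over \emph{all} compatible fields (not just periodic ones) requires the screening/lower-bound machinery of the renormalized-energy theory, and both cited references handle this point.
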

\begin{proof} We refer to  \cite[Proposition 1.5, $d=1$]{petrache2017next} or \cite[Proposition 2.10]{BS}
\end{proof}

\subsubsection{Discrepancy estimates}
 We introduce an important quantity for our purpose: the discrepancy, as well as bounds on the average discrepancy for log-gases.
\begin{definition}[Discrepancy]
\label{def:discrepancy}
The \emph{discrepancy} of $\gamma\in\Conf(\R)$ relative to a bounded Borel set $\Lambda \subset \R$ of Lebesgue measure $|\Lambda|$ is defined by
$$
\Discr_{\Lambda}( \gamma) := |\gamma_\Lambda| - |\Lambda|.
$$\end{definition}

A crucial fact for our purpose is that a bound on the renormalized energy  translates into a discrepancy estimate, see e.g. \cite[Section 3.2]{Leble:2017mz}.
\begin{lemma}[Energy bound yields discrepancy estimate]
\label{lem:energytodiscr}  There exists $C>0$ such that, for any stationary point process $P$ on $\R$ satisfying $\E_P [ \mathbb{W}(\gamma) ]<\infty$ and any bounded Borel set $\La\subset\R$, we have
\begin{equation}
\label{energytodiscr}
\E_P\left[ \Discr_{\Lambda}^2(\g) \right] \leq C\left (C + \E_P \left[ \mathbb{W}(\gamma) \right]\right) |\Lambda|.
\end{equation}
\end{lemma}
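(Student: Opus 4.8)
\textbf{Proof plan for Lemma \ref{lem:energytodiscr}.}

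The plan is to reduce the estimate to a single, scale-normalized inequality relating the squared discrepancy of a configuration in an interval to its renormalized energy, and then to average against $P$ using stationarity. First I would recall (or re-derive from the compatibility relation $-\div E = 2\pi(\gamma - \delta_\R)$) the electrostatic identity that, for any $E \in \Comp(\gamma)$ and any interval $I = [-R,R]$, the flux of $E$ through the vertical sides of the strip $I \times \R$ equals $2\pi \Discr_I(\gamma)$ up to boundary terms. By Cauchy--Schwarz this flux is controlled by the $L^2$ norm of $E$ over a slightly larger strip, so that
\begin{equation*}
\Discr_{I}^2(\gamma) \leq C\left( |I| + \int_{I' \times \R} |E_\eta(x,y)|^2\,\d x\,\d y \right)
\end{equation*}
for $I'$ a bounded enlargement of $I$ and $\eta$ fixed small; the extra $|I|$ absorbs the log-divergence and the regularization correction. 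Taking the infimum over $E$ and passing to the limsup in $R$ (after rescaling) one gets, for a \emph{unit} interval $I$, the bound $\Discr_I^2(\gamma) \le C(C + \W(\gamma))$ for $\W$-a.e. realization; this is essentially the content of \cite[Section 3.2]{Leble:2017mz}, which I would cite for the technical justification of the limiting procedure.

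Next I would handle a general bounded Borel set $\La$. The point is that $\Discr_\La$ is not additive over a partition, but $\Discr_\La^2$ is controlled by a sum of squared discrepancies over a covering of $\La$ by $\lceil |\La| \rceil + O(1)$ unit intervals plus an $O(|\La|)$ error coming from the fractional parts, using $(\sum a_i)^2 \le (\#\text{terms}) \sum a_i^2$ together with the fact that the number of covering intervals is $O(|\La| + 1)$. Then I would take $\E_P$ of both sides: by stationarity of $P$, the expectation $\E_P[\Discr_{I_k}^2(\gamma)]$ is the same for every unit interval $I_k$, and each is bounded by $C(C + \E_P[\W(\gamma)])$ from the previous step (here one uses that the renormalized energy is itself translation-covariant in the appropriate averaged sense, so $\E_P[\W]$ controls the local energy uniformly). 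Summing over the $O(|\La|)$ intervals yields $\E_P[\Discr_\La^2(\gamma)] \le C(C + \E_P[\W(\gamma)])|\La|$, after adjusting the constant $C$.

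The main obstacle I expect is the careful passage from the scale-$R$ truncated energy in the definition of $\W$ to a genuine pointwise discrepancy bound that holds $P$-almost surely with a \emph{deterministic} constant: one must commute the infimum over compatible fields, the $\eta \to 0$ limit, and the $R \to \infty$ limsup with the extraction of the flux through a fixed finite window, and argue that a near-optimal field on the large strip is still near-optimal (up to lower order) on the small one. This is exactly the kind of localization argument carried out in \cite[Section 3.2]{Leble:2017mz} for Coulomb and log-gases, so in practice I would invoke that reference rather than reproduce the argument; the only genuinely new bookkeeping is the combinatorial step passing from unit intervals to an arbitrary bounded Borel $\La$, which is elementary.
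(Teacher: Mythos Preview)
The paper's own proof is a one-line citation to \cite[Lemma~3.2]{Leble:2017mz}, so in the end your plan to invoke that reference is exactly what the paper does. However, two steps of your elaboration are genuinely wrong, not just sketchy.

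First, the pointwise inequality $\Discr_I^2(\gamma) \le C(C + \W(\gamma))$ for a \emph{fixed} unit interval $I$ is false. Since $\W(\gamma)$ is a $\limsup_{R\to\infty}\frac{1}{R}\int_{[-R,R]\times\R}$ spatial average, a configuration can carry an arbitrarily large cluster in $[0,1]$ (hence arbitrarily large $\Discr_{[0,1]}^2$) while keeping $\W(\gamma)$ bounded, because that local defect is washed out by the averaging. The flux argument only gives $\Discr_I^2(\gamma)\le C\int_{I'\times\R}|E_\eta|^2+C$, i.e.\ control by the \emph{local} energy of a compatible field, not by $\W(\gamma)$ itself. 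It is only after taking $\E_P$ and exploiting stationarity---of $P$ and of a stationarily chosen near-optimal field---that the expected local energy density is identified with $\E_P[\W]$; this identification happens in expectation, not pointwise. Your chain ``pointwise bound, then take $\E_P$'' is therefore broken at the first link.

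Second, your reduction from a general bounded Borel $\La$ to unit intervals fails. A covering of $\La$ by $\lceil|\La|\rceil+O(1)$ unit intervals exists only when $\La$ is (essentially) an interval: for $\La=[0,\tfrac12]\cup[M,M+\tfrac12]$ one has $|\La|=1$ but needs two unit intervals at distance $M$. Your Cauchy--Schwarz step $(\sum_k a_k)^2\le N\sum_k a_k^2$ then produces a factor $N$ tied to the diameter of $\La$, not to $|\La|$, and the final bound degrades to $C(C+\E_P[\W])\cdot\mathrm{diam}(\La)$. So this ``elementary bookkeeping'' does not deliver the stated estimate for arbitrary Borel $\La$; one must run the energy argument and pass to expectation directly, which is what the cited lemma does.
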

\begin{proof}
This is \cite[Lemma 3.2]{Leble:2017mz}.
\end{proof}

In particular, for the periodic log-gas $Q_{n,\beta}$, we obtain the following bound.
\begin{lemma}
\label{le:disc}
 There exists a constant $C_\beta>0$ depending  on $\beta $ only such that, for any bounded Borel set $\La\subset\R,$ and $n\geq 1$ large enough so that $\La\subset\La_n$, we have 
 $$  \E_{Q_{n, \beta}} \left[ \Discr_\La^2(\g)\right] \le C_\beta |\La| \quad 
\textrm{ and } \quad
   \E_{Q_{n, \beta}} \left[ |\g_\La|^2\right] \le 2 |\La|(C_\beta+|\La|).$$
\end{lemma}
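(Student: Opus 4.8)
The strategy is to apply Lemma~\ref{lem:energytodiscr} to the point process $Q_{n,\beta}$ and then control $\E_{Q_{n,\beta}}[\W(\g)]$ uniformly in $n$, exploiting the explicit formula for the renormalized energy of a periodic configuration. First I would note that under $Q_{n,\beta}$, a realization $\g$ is a.s.\ a set of $n$ distinct points in $\La_n$, so Proposition~\ref{wperiodic} gives $\W(\g^{n-\per}) = \frac{\pi}{n}\bigl(2\H^{n-\per}_{\La_n}(\g) + n\log\frac{n}{2\pi}\bigr)$ where $\g^{n-\per}$ is the periodization. The point process $\sineb$ and its finite approximations are defined on $\R$, but there is a subtlety: the renormalized energy $\W$ of $\g$ as a configuration in $\La_n\subset\R$ versus the periodic extension $\g^{n-\per}$. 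For the purpose of applying Lemma~\ref{lem:energytodiscr} to $Q_{n,\beta}$ one works with the periodized process (equivalently, one may view $Q_{n,\beta}$ as a stationary process on $\R$ by periodization), for which $\E_{Q_{n,\beta}}[\W(\g^{n-\per})]$ is exactly $\frac{\pi}{n}\bigl(2\E_{Q_{n,\beta}}[\H^{n-\per}_{\La_n}(\g)] + n\log\frac{n}{2\pi}\bigr)$.

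The key computation is then to show this quantity is bounded uniformly in $n$. But this is precisely the content of the estimate \eqref{EHlimit} established in the proof of Proposition~\ref{prop:sineb}: there it is shown that $\sup_n \bigl|\frac1n\E_{Q_{n,\beta}}[\H^{n-\per}_{\La_n}(\g)] + \frac12\log\frac{n}{2\pi}\bigr| < \infty$, which rearranges to exactly $\sup_n \E_{Q_{n,\beta}}[\W(\g^{n-\per})] < \infty$. So I would invoke \eqref{EHlimit} directly. Combining with Lemma~\ref{lem:energytodiscr} applied to the (periodized) stationary process $Q_{n,\beta}$ on $\R$ gives a constant $C_\beta>0$, depending only on $\beta$, with $\E_{Q_{n,\beta}}[\Discr_\La^2(\g)] \le C_\beta|\La|$ for every bounded Borel $\La$ and all $n$ large enough that $\La\subset\La_n$. (One should double-check that Lemma~\ref{lem:energytodiscr}, stated for processes on $\R$ with finite expected renormalized energy, indeed applies to the periodized $Q_{n,\beta}$ — the periodization is stationary and has finite $\E[\W]$, so it does.)

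For the second inequality, I would use $|\g_\La| = \Discr_\La(\g) + |\La|$, so that $|\g_\La|^2 \le 2\Discr_\La^2(\g) + 2|\La|^2$ by the elementary inequality $(a+b)^2 \le 2a^2 + 2b^2$; taking expectations and using the first bound yields $\E_{Q_{n,\beta}}[|\g_\La|^2] \le 2C_\beta|\La| + 2|\La|^2 = 2|\La|(C_\beta + |\La|)$, as claimed.

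The main obstacle I anticipate is the bookkeeping around the renormalized energy of $Q_{n,\beta}$: one must be careful that $\W$ as applied in Lemma~\ref{lem:energytodiscr} refers to the energy of the configuration viewed in $\R$, and that for $Q_{n,\beta}$ the relevant finite expected energy is that of the $n$-periodic extension, which Proposition~\ref{wperiodic} ties to $\H^{n-\per}_{\La_n}$. Once that identification is made cleanly, the estimate is just a restatement of \eqref{EHlimit} combined with Lemma~\ref{lem:energytodiscr}, and the rest is the trivial algebra above. One minor point to verify is uniformity of the constant $C$ in Lemma~\ref{lem:energytodiscr} (it does not depend on $P$), which is asserted there, so $C_\beta := C(C + \sup_n\E_{Q_{n,\beta}}[\W(\g^{n-\per})])$ depends on $\beta$ only.
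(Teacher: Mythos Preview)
Your proposal is correct and follows essentially the same approach as the paper: apply Lemma~\ref{lem:energytodiscr} to the periodized (hence stationary) process, use Proposition~\ref{wperiodic} to rewrite $\E_{Q_{n,\beta}}[\W(\g^{n-\per})]$ in terms of $\E_{Q_{n,\beta}}[\H^{n-\per}_{\La_n}(\g)]$, and invoke \eqref{EHlimit} for the uniform bound; the second inequality via $(a+b)^2\le 2a^2+2b^2$ is exactly what the paper does as well.
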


\begin{proof}
 As $Q_{n, \beta}$ is the law of a stationary point process, one can apply Lemma \ref{lem:energytodiscr} together with Proposition \ref{wperiodic} to obtain, provided that $\La\subset\La_n$,
 \begin{align*}  
   \E_{Q_{n, \beta}} \left[ \Discr_\La^2(\g)\right] & =  \E_{Q_{n, \beta}} \left[ \Discr_\La^2(\g^{n-\per})\right]\\
   &  \le C\left(C+   \frac{\pi}n \E_{Q_{n, \beta}}\left[2\H_{\Lambda_n}^{n-\per}(\g)+n\log\frac{n}{2\pi}\right] \right) |\La|.
   \end{align*}
Using \eqref{EHlimit}, the first inequality follows.  The second inequality is obtained from the first one by writing
$$
   \E_{Q_{n, \beta}} \left[ |\g_\La|^2\right]\leq 2   \left(\E_{Q_{n, \beta}} \left[ \Discr_\La^2(\g)\right] +|\La|^2\right).
$$
\end{proof}

We will also use the following asymptotic behavior for the discrepancy.

\begin{lemma}
\label{lem:Discrvers0}
If $\gamma\in\Conf(\R)$ satisfies $\mathbb{W}(\gamma)<\infty$ then,  as $k\to\infty$,
$$
\Discr_{[0,k]}(\gamma)=o(k).
$$ 
\end{lemma}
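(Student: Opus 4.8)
The plan is to derive the sublinear growth of the discrepancy from the finiteness of the renormalized energy $\W(\gamma)$, exploiting the link between energy and the $L^2$-norm of a compatible electric field. Recall that $\W(\gamma)<\infty$ means there exists $E\in\Comp(\gamma)$ such that, after the $\eta$-regularization, the quantity $\frac1R\int_{[-R,R]\times\R}|E_\eta|^2$ stays bounded as $R\to\infty$, up to the $-2\pi\log\eta$ correction. First I would fix such a field $E$ and a small $\eta>0$, and observe that on scales much larger than the typical interparticle distance, the discrepancy $\Discr_{[0,k]}(\gamma)=\gamma([0,k])-k$ is controlled by the flux of $E$ across the boundary of a suitable two-dimensional region containing $[0,k]\times\{0\}$: since $-\div E=2\pi(\gamma-\delta_\R)$, integrating over a box $[0,k]\times[-h,h]$ and using the divergence theorem gives $2\pi\,\Discr_{[0,k]}(\gamma)$ equal to a boundary integral of $E\cdot\nu$ over $\partial([0,k]\times[-h,h])$.

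The key step is then a Cauchy--Schwarz estimate on this boundary flux. The contribution of the two vertical sides $\{0\}\times[-h,h]$ and $\{k\}\times[-h,h]$ is bounded, via Cauchy--Schwarz, by $\sqrt{2h}$ times the $L^2$-norm of $E$ (or $E_\eta$, after handling the local singularities near points of $\gamma$, which only costs a bounded amount per unit length and is harmless here) restricted to a thin vertical strip; averaging over $h$ in a range like $[k,2k]$ and using that $\int_{[-2k,2k]\times\R}|E_\eta|^2 = O(k)$ by the energy bound, one finds that for a good choice of $h\asymp k$ the vertical flux is $O(\sqrt{k\cdot k/k})\cdot\sqrt{k}=O(\sqrt{k}\cdot\sqrt{k})$... more carefully: the strip has $L^2$-mass $O(k)$, its length in the $y$-direction is $O(k)$, so the average over $y\in[-h,h]$ of $|E_\eta(0,y)|^2$-type quantities is $O(1)$, giving a typical vertical flux of size $O(h)=O(k)$ — which is not yet $o(k)$. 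To get the strict $o(k)$ I would instead choose $h=\epsilon k$ for arbitrarily small $\epsilon$ and split: the horizontal sides $[0,k]\times\{\pm h\}$ contribute, again by Cauchy--Schwarz, at most $\sqrt{k}$ times the $L^2$-norm of $E_\eta$ on those horizontal segments, and averaging the choice of $h$ over $[\epsilon k/2,\epsilon k]$ shows this is $O(\sqrt{k}\cdot\sqrt{1})=O(\sqrt k)=o(k)$; the vertical sides then contribute $O(\epsilon k)$ by the computation above. Letting $\epsilon\to 0$ after $k\to\infty$ yields $\Discr_{[0,k]}(\gamma)=o(k)$.

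The main obstacle I anticipate is making the boundary-flux/Cauchy--Schwarz argument fully rigorous in the presence of the logarithmic singularities of $E$ at the points of $\gamma$ lying near $[0,k]$: one must work with the truncated field $E_\eta$, control the discrepancy between $E$ and $E_\eta$ (a sum of $\nabla f_\eta$ over points, each contributing boundedly), and choose the averaging scales so that no exceptional set of radii spoils the estimate. A cleaner alternative, which I would try first, is to invoke directly the already-established discrepancy machinery: by the arguments behind Lemma \ref{lem:energytodiscr} (see \cite[Section 3.2]{Leble:2017mz}), $\W(\gamma)<\infty$ gives a quantitative bound of the form $\Discr_{[0,k]}^2(\gamma)\le C(1+\W(\gamma))\,k$ — hence $\Discr_{[0,k]}(\gamma)=O(\sqrt{k})=o(k)$ — provided such a deterministic (rather than in-expectation) version is available there; if only the in-expectation statement is cited, I would extract its deterministic kernel, which is exactly the flux estimate sketched above, and conclude.
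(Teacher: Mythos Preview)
The paper gives no argument here: its entire proof is the one-line citation to \cite[Lemma~2.1]{petrache2017next}. The divergence-theorem/flux computation you sketch is exactly the mechanism behind that cited lemma, so in substance you are reconstructing what the paper invokes as a black box, and your overall strategy is the correct one.

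One step in your outline needs more care, however. Your claim that ``the vertical sides then contribute $O(\epsilon k)$'' is not justified at the right endpoint $x=k$. To bound $\int_{-h}^{h}|E_\eta(k,y)|\,\d y$ via Cauchy--Schwarz you need control on $\int_\R |E_\eta(a,y)|^2\,\d y$ for some $a$ near $k$, and averaging over $a\in[k-1,k]$ only helps if $\int_{[k-1,k]\times\R}|E_\eta|^2$ is bounded uniformly in $k$. The hypothesis $\W(\gamma)<\infty$ gives only the limsup condition $\limsup_R R^{-1}\int_{[-R,R]\times\R}|E_\eta|^2<\infty$, which does \emph{not} imply this: the increments of $R\mapsto\int_{[-R,R]\times\R}|E_\eta|^2$ can be large along a sparse sequence of $R$'s even while the average stays bounded. (Near $x=0$ there is no issue, since $\int_{[-1,0]\times\R}|E_\eta|^2$ is a fixed finite constant independent of $k$.) The standard remedy is to average the moving endpoint over a window of width proportional to $k$, which then forces one to compare discrepancies of nested intervals and close via a dyadic argument; alternatively one works with symmetric intervals $[-k,k]$ and averages both sides simultaneously. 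These refinements are precisely what the citation to Petrache--Serfaty absorbs. Your ``cleaner alternative'' of extracting a deterministic version of Lemma~\ref{lem:energytodiscr} runs into the same obstruction: that lemma is stated in expectation, and its pointwise kernel is again the flux estimate, with the same endpoint issue.
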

\begin{proof}
This is a consequence of \cite[Lemma 2.1]{petrache2017next}.
\end{proof}

\subsubsection{Proof of Lemma \ref{le:Bp}}
\label{sec:proofleBp}

An easy consequence of Lemma \ref{le:disc} is the following. 

\begin{proof}[Proof of Lemma \ref{le:Bp}] Lemma \ref{le:disc} and Markov inequality yield the existence of $C_\La>0$ such that, for any $1\leq p\leq n$ and  $n$ large enough so that $\La\subset\La_n$, 
\begin{multline*}
Q_{n,\beta}(\Conf(\R)\setminus \mathsf{B}_p)  \leq Q_{n,\beta}(|\gamma_\La|>p)+ Q_{n,\beta}(|\gamma_{\La_p}|>p^2)\\
 \leq \frac{1 }{p^2}\E_{Q_{n,\beta}}(|\gamma_\La|^2)+\frac{1 }{p^4}\E_{Q_{n,\beta}}(|\gamma_{\La_p}|^2) \leq \frac{C_{\La}}{p^2},
\end{multline*}
and the lemma follows. 
\end{proof}

\subsection{Auxiliary proofs}
\label{sec:disc}
We now provide proofs for Lemmas \ref{le:move}, \ref{lem:definimoveinf}, and \ref{lem:truncerrorinfinitecase}.  

In this section, we always assume $p \geq n$ are large enough so that $3\La\subset\La_p\subset\La_n$.  We  use the following notation:
\begin{itemize}
\item[$\diamond$] The distance of $x\in\R$ to a subset $\La\subset\R$ is $\dist(x,\La):=\inf_{y\in\La}|x-y|$, and the distance from a subset $I\subset\R$ to $\La$ is $\dist(I,\La):=\inf_{x\in I}\dist(x,\La)$.
\item[$\diamond$] For any $\gamma_1,\gamma_2\in\Conf(\R)$ with $|\gamma_1|=|\gamma_2|=M<\infty$, say $\gamma_j=\sum_{i=1}^M \delta_{\gamma_j^i}$, we set 
\begin{equation}
\label{def:W1}
W_1(\gamma_1,\gamma_2):=\inf_{\sigma\in\frak S_M}\sum_{i=1}^{M}|\gamma_1^i-\gamma_2^{\sigma(i)}|
\end{equation}
where $\frak S_M$ is the set of permutations of $\{1, \ldots, M\}.$
Note that the definition does not depend on the indexing.
\item[$\diamond$]We denote by $\Leb$ the Lebesgue measure of $\R$  and by $\Leb_\La$ its restriction to $\La\subset\R$. 
\end{itemize}

\subsubsection{Intermediary results}
\begin{lemma}[The electrostatic potential generated when moving points]
\label{le:estimove}
 Take any configurations $\gamma,\eta$ in $\Conf(\R)$  such that $|\eta|=|\gamma_\La|$.  Recalling the definitions \eqref{def:g} and \eqref{def:gng}, we set  for convenience
\eq
\label{Psidef}
\Psi_n:=g_n*(\eta-\gamma_\La), \quad \Psi:=g*(\eta-\gamma_\La).
\qe
Given $\epsilon>0$, the following holds true for $p$ large enough (depending on $\epsilon$) and $n$ large enough (depending on $p$).
\begin{itemize}
 \item[(a)]  
 \begin{equation*}
 \left|\int_{\La_p} \Psi(s) \d s \right| \le \epsilon \, W_1(\eta, \gamma_\La) 
 \end{equation*}
\item[(b)]

\begin{equation*}
\left| \int_{\La_n \setminus \La_p}  \Psi_n(s) \d s \right| \le  \varepsilon \Big( W_1(\eta,\gamma_\La)+ |\gamma_\La|\Big).
\end{equation*}
 \item[(c)] For any $x \in \La_n \setminus \La_p,$
 \begin{equation*}
|\Psi_n(x)|\leq \frac{W_1(\eta,\gamma_\La) }{\dist(x,\La)} \qquad \mbox{ and }\qquad |\Psi_n^\prime(x)|\leq \frac{8 W_1(\eta,\gamma_\La)}{\dist(x,\La)^2}.
\end{equation*}
\item[(d)]   For any $x \in \R \setminus \La_p,$
 \begin{equation*}
|\Psi(x)|\leq \frac{W_1(\eta,\gamma_\La) }{\dist(x,\La)} 
\qquad \mbox{ and }\qquad
|\Psi^\prime(x)|\leq \frac{8 W_1(\eta,\gamma_\La)}{\dist(x,\La)^2}.
\end{equation*}
\end{itemize}
\end{lemma}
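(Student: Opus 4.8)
The plan is to exploit the constraint $|\eta|=|\gamma_\La|$ via an optimal matching and then reduce each estimate to a one‑dimensional computation about $g$ and $g_n$. Set $M:=|\gamma_\La|$; if $M=0$ all four quantities vanish, so assume $M\geq1$ and write $\gamma_\La=\sum_{i=1}^M\delta_{a_i}$, $\eta=\sum_{i=1}^M\delta_{b_i}$, with the labelling of $\eta$ chosen so that $\sum_{i=1}^M|a_i-b_i|=W_1(\eta,\gamma_\La)$ (an optimal matching exists since $M<\infty$). Then $\eta-\gamma_\La=\sum_i(\delta_{b_i}-\delta_{a_i})$, hence Lebesgue‑a.e. $\Psi=\sum_i\big(g(\cdot-b_i)-g(\cdot-a_i)\big)$ and $\Psi_n=\sum_i\big(g_n(\cdot-b_i)-g_n(\cdot-a_i)\big)$. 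Because $3\La\subset\La_p$ one has $\La\subset[-p/6,p/6]$, so $\mathrm{conv}(\La)\subsetneq\La_p$; and since $\La$ is \emph{fixed}, there is a constant $r$ independent of $p,n$ with $|a_i|,|b_i|\leq r$. The whole mechanism is that each pair‑difference $g(\cdot-b_i)-g(\cdot-a_i)$ (resp. with $g_n$) is controlled by $|a_i-b_i|$ times a bound on $g'$ (resp. $g_n'$) away from $\La$, and summing over $i$ produces the factor $W_1(\eta,\gamma_\La)$.

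For \emph{parts (c) and (d)} — the pointwise bounds, which I would treat first — note that for $x\notin\La_p$ we have $x\notin\mathrm{conv}(\La)$, so $\dist(x,\mathrm{conv}(\La))=\dist(x,\La)$ and $|x-u|\geq\dist(x,\La)$ for all $u$ on the segment $[a_i,b_i]\subset\mathrm{conv}(\La)$, while $|x-a_i|,|x-b_i|\geq\dist(x,\La)$ since $a_i,b_i\in\La$. For (d), $g(x-b_i)-g(x-a_i)=\int_{a_i}^{b_i}(x-u)^{-1}\,\d u$ and $g'(x-b_i)-g'(x-a_i)=(b_i-a_i)/((x-a_i)(x-b_i))$ give $|\Psi(x)|\leq W_1/\dist(x,\La)$ and $|\Psi'(x)|\leq W_1/\dist(x,\La)^2$, which is stronger than stated. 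For (c) the same scheme applies with $g_n'(v)=-\tfrac\pi n\cot(\tfrac{\pi v}{n})$ and $g_n''(v)=\tfrac{\pi^2}{n^2}\csc^2(\tfrac{\pi v}{n})$: using $|\cot\theta|\leq|\theta|^{-1}$ and $|\sin\theta|\geq\tfrac2\pi|\theta|$ on $|\theta|\leq\pi/2$ and the $n$‑periodicity of $|\sin(\pi\cdot/n)|$, one bounds these by $\dist(v,n\Z)^{-1}$ and $\tfrac{\pi^2}{4}\dist(v,n\Z)^{-2}$, and for $x\in\La_n\setminus\La_p$, $u\in\mathrm{conv}(\La)$ one checks $\dist(x-u,n\Z)\geq(1-o(1))\dist(x,\La)$ as $p/n\to0$; since $n$ is taken large depending on $p$, this yields the claimed constants $1$ and (with room, as $\pi^2/4<8$) $8$.

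For \emph{part (a)}, $\int_{\La_p}\Psi=\sum_i\big(G(a_i)-G(b_i)\big)$ with $G(a):=\int_{-p/2}^{p/2}\log|s-a|\,\d s$; the explicit antiderivative gives $G(a)=(\tfrac p2-a)(\log(\tfrac p2-a)-1)+(\tfrac p2+a)(\log(\tfrac p2+a)-1)$ for $|a|<p/2$, so $G'(a)=\log\tfrac{p/2+a}{p/2-a}$. Since $|a_i|,|b_i|\leq r$ with $r$ fixed, $\sup_{|a|\leq r}|G'(a)|=\log\tfrac{p/2+r}{p/2-r}\leq Cr/p$ for $p$ large, so by the mean value theorem $|G(a_i)-G(b_i)|\leq(Cr/p)|a_i-b_i|$ and summation gives $\big|\int_{\La_p}\Psi\big|\leq(Cr/p)\,W_1(\eta,\gamma_\La)\leq\varepsilon\,W_1(\eta,\gamma_\La)$ as soon as $p\geq Cr/\varepsilon$.

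\emph{Part (b) is the main obstacle.} A naive bound via (c) fails, because $\int_{\La_n\setminus\La_p}\dist(\cdot,\La)^{-1}$ grows like $\log(n/p)$; one must instead use the exact cancellation $\int_{\La_n}g_n(\cdot-a)\,\d s=0$ for every $a$, which follows from the $n$‑periodicity of $g_n$ and the classical identity $\int_0^{\pi/2}\log(2\sin\theta)\,\d\theta=0$. Thus $\int_{\La_n}\big(g_n(\cdot-b_i)-g_n(\cdot-a_i)\big)\,\d s=0$, hence $\int_{\La_n\setminus\La_p}\Psi_n=-\int_{\La_p}\Psi_n$, and it remains to control $\int_{\La_p}\Psi_n$. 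Writing $g_n(v)=g(v)+\log\tfrac n{2\pi}+\mathrm{corr}_n(v)$ with $\mathrm{corr}_n(v)=-\log\big|\tfrac{\sin(\pi v/n)}{\pi v/n}\big|$, so that $|\mathrm{corr}_n(v)|\leq Cv^2/n^2$ for $|v|\leq n/2$, the constant $\log\tfrac n{2\pi}$ cancels in each pair‑difference, giving $\int_{\La_p}\Psi_n=\int_{\La_p}\Psi+\int_{\La_p}\sum_i\big(\mathrm{corr}_n(s-b_i)-\mathrm{corr}_n(s-a_i)\big)\,\d s$. The first term is $\leq\varepsilon W_1$ by part (a); the second is bounded crudely by $|\La_p|\cdot2M\cdot\sup_{|v|\leq p/2+r}|\mathrm{corr}_n(v)|\leq Cp^3M/n^2\leq\varepsilon M=\varepsilon|\gamma_\La|$ once $n$ is large depending on $p$ — and this crude summation over the $M=|\gamma_\La|$ moved points is exactly where the extra term $|\gamma_\La|$ in (b) comes from. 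The genuinely delicate point of the lemma is precisely this vanishing of $\int_{\La_n}g_n$ over a full period: it is what tames the long‑range logarithmic tail of the far shell $\La_n\setminus\La_p$, and spotting it — rather than estimating the shell contribution head‑on — is the crux.
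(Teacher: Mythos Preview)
Your proposal is correct and follows essentially the same route as the paper: optimal matching to produce the $W_1$ factor, the explicit antiderivative and $G'(0)=0$ for (a), the mean value theorem with bounds on $g_n'$, $g_n''$ for (c)--(d), and, crucially for (b), the identity $\int_{\La_n} g_n(\cdot-a)\,\d s=0$ (which the paper invokes as $g_n*\Leb_{\La_n}=0$ on $\La_n$, citing \cite{BS}) to convert the far-shell integral into $-\int_{\La_p}\Psi_n$ and then compare $g_n$ with $g$ up to the cancelling constant $\log(n/2\pi)$. Your discussion of the edge case $|x-u|>n/2$ via $\dist(x-u,n\Z)$ is in fact slightly more careful than the paper's one-line mean value bound; the resulting constant $1+o(1)$ versus exactly $1$ is immaterial for all later uses of the lemma.
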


\begin{proof}
Let us enumerate the configurations as $\gamma_\La = \sum_{i=1}^M \delta_{\gamma_i}$ and $\eta = \sum_{i=1}^M \delta_{\eta_i}$.  To prove (a), we start by writing 

$$ 
\int_{\La_p} \Psi(s) \d s  = \sum_{i=1}^M \int_{\La_p} \Big(\log |\gamma_i -s| - \log |\eta_i -s|  \Big) \d s.
$$
Now, set 
$$
\V(t) := \int_{-1}^1 - \log |t - s| \d s= (1+t) \log (1+t) + (1-t) \log (1-t).
$$
and let $k>0$ be fixed so that $\La\subset\La_k$. We obtain, by a linear change of variables sending $[-1,1]$ on $\La_p = [-\tfrac p2, \tfrac p2]$:
$$
\left|\int_{\La_p} \Psi(s) \d s \right|= \left|\frac{p}{2} \sum_{i=1}^M \left( \V\left(\frac{2\eta_i}{p}\right)-  \V\left(\frac{2\gamma_i}{p}\right)\right)\right|\leq \sup_{[-\frac kp,\frac kp]} |\V'|\sum_{i=1}^M|\eta_i-\gamma_i|.
$$
Since $\V^\prime$ is continuous near the origin, since $\V'(0)=0$ and since the enumeration of $\eta$ and $\gamma_\La$ is arbitrary, (a) follows by taking $p$ large enough, depending on $\La$ and $\epsilon$.

We now turn to (b).  Since $g_n*{\rm Leb}_{\La_n}=0$  on $\La_n$, see e.g. \cite[Equation (2.49)]{BS}, we have 
$$
 \int  \Psi_n\, {\rm Leb}_{\La_n \setminus \La_p} = - \int   \Psi_n \,{\rm Leb}_{ \La_p}.
 $$
 For any fixed $p$, we have for $n$ large enough and for any $x\neq y \in \La_p,$
 \begin{equation}
 \label{loggn}
 \left|g_n(x-y) + \log \left( \frac{2\pi}{n} \right) -g(x-y)\right| = \left|\log \left|\frac{2 \sin \frac{\pi(x-y)}{n}}{\frac{2\pi(x-y)}{n}}\right|\right| \le \frac{1}{p^2}.
 \end{equation}
We may thus write 
$$
  \left|\int  \Psi_n\, {\rm Leb}_{\La_n \setminus \La_p}\right| = \left|\int \Psi_n\, {\rm Leb}_{\La_p}\right|  \le  \left|\int \Psi\, {\rm Leb}_{\La_p}\right|  + \frac{|\gamma_\La| p}{p^2},
  $$
 where we have used the fact that $\eta$ and $\gamma_\La$ have the same number of points, hence the contribution of the constant term $\log \left( \frac{2\pi}{n} \right)$ in \eqref{loggn} vanishes. Using point (a) of the present lemma, we obtain
 $$
  \left|\int  \Psi_n\, {\rm Leb}_{\La_n \setminus \La_p}\right|
 \le  \varepsilon \left( W_1(\eta,\gamma_\La)+ |\gamma_\La|\right),
$$
for $p$ large enough (depending on $\La, \epsilon$) and $n$ large enough (depending on $p$).

Finally, we prove (c) and (d). For any $x\in\Lambda_n \backslash \Lambda_p$, by applying the mean value theorem to $g_n(x-\cdot)$ between $\eta_i$ and $\gamma_i$, we obtain
$$
\left|g_n(x-\eta_i) - g_n(x-\gamma_i)\right| \leq |\eta_i-\gamma_i| \frac{\pi}{n} \frac{1}{|\tan\left(\frac{\pi}{n} \dist(x, \Lambda)\right)|} \leq \frac{|\eta_i-\gamma_i|}{\dist(x, \Lambda)},
$$
and the first inequality of (c) is obtained by summing over $i \in \{1, \dots, M\}$. We obtain the second inequality of (c), as well as (d),  by the same argument but using $g_n',g$ and $g'$ instead of $g_n$ respectively.

\end{proof}

\subsubsection{Proof of Lemma \ref{le:move}}
\label{subsection:lemove}
We want to show that in the finite periodic model, with high probability, it is possible to move the points in $\La$ at a small cost.

\begin{proof}[Proof of Lemma \ref{le:move}] In view of \eqref{Psidef}, and in order to obtain the controls on the move functions from discrepancy estimates, it is convenient to work with ``move functions with background'' defined by

\begin{equation}
\label{def:Movetilde}
\Move^{n-\per}_{\La, \La_p}(\gamma,\eta)  :=\int _{\La_p\setminus\La}\Psi_n\, \d (\gamma- \Leb).
\end{equation}
They are related to the usual move functions defined in \eqref{def:Mnlambda} as follows:
\begin{equation}
\label{MnpMnptilde}
\M^{n-\per}_{\La, \La_p}(\gamma,\eta) = \Move^{n-\per}_{\La, \La_p}(\gamma,\eta) + \int_{\La_p\setminus\La} \Psi_n  \,\d\Leb.
\end{equation}
Given any $\gamma\in\Conf(\R)$, let us set for convenience,
\begin{align*}
\mathsf E & := \sup_{
\begin{subarray}{c}
\eta \in \Conf(\La)\\ 
|\eta|=| \gamma_\La|
\end{subarray} }
\left|\M^{n-\per}_{\La, \La_n}(\gamma,\eta)-\M^{n-\per}_{\La, \La_p}(\gamma,\eta)\right|,
\\ 
 \widetilde{\mathsf E} & := \sup_{
\begin{subarray}{c}
\eta \in \Conf(\La)\\ 
|\eta|=| \gamma_\La|
\end{subarray} }\left|\Move^{n-\per}_{\La, \La_n}(\gamma,\eta)-\Move^{n-\per}_{\La, \La_p}(\gamma,\eta)\right| .
\end{align*}
Using \eqref{MnpMnptilde}  we see that
$$
\mathsf{E} \leq \widetilde{\mathsf E} + \sup_{
\begin{subarray}{c}
\eta \in \Conf(\La)\\ 
|\eta|=| \gamma_\La|
\end{subarray} } \left|\int_{\La_n \setminus \La_p} \Psi_n \, \d \Leb\right|.
$$
Using Lemma \ref{le:estimove}(b) and the fact that $W_1(\eta,\gamma_\La)\leq |\La|\,|\g_\La|$, given any $\alpha>0$, we have, if $p$ is large enough (depending on $\La, \alpha$) and $n$ is large enough (depending on $p$)

$$
\mathsf E 
\leq \widetilde{\mathsf E}+\alpha |\gamma_\La|.
$$ 

Next, for any $\delta,L>0$, we have
\begin{align*}
 \Qnbeta\Big(\Conf(\R)\setminus \A_{\La, \La_p}^{n-\per}(\delta)\Big) &  =  \Qnbeta\big( \mathsf E>\delta\big)\\
&   \leq   \Qnbeta({\mathsf E}>\delta ,\; |\g_\La|\leq L)+  \Qnbeta(|\g_\La|> L)\\
&  \leq   \Qnbeta(\widetilde{\mathsf E}>\delta-\alpha L  ,\; |\g_\La|\leq L)+  \Qnbeta(|\g_\La|> L)\\
 &  \leq  \frac1{\delta-\alpha L}\E_{\Qnbeta}\big[\widetilde{\mathsf E}\, \bs 1_{|\gamma_\La|\leq L}\big]+\frac1{L^2}\E_{\Qnbeta}[|\gamma_\La|^2].
\end{align*}
To prove the lemma, it is enough to show that, given any $L>0$,
$$
\E_{\Qnbeta}\big[\widetilde{\mathsf E}\, \bs 1_{|\gamma_\La|\leq L}\big]
$$
can be made arbitrarily small by taking first $p$, then $n$, large enough.  Indeed, given any $\delta>0$, by taking $p,n,L$ large enough and $\alpha:=\delta/2L$  the lemma would follow from Lemma~\ref{le:disc}.

To prove this claim, we split $\La_n\setminus\La_p$ into the subintervals  
$$
I_j:= 
\begin{cases}                                                                                     
\left(\frac{j}{2},  \frac{j+1}{2} \right] & \textrm{ if } j > 0 \\
\left[\frac{j}{2},  \frac{j+1}{2} \right) & \textrm{ if } j < 0,
\end{cases}
$$
so as to write
\eq
\label{Mex1}
\Move^{n-\per}_{\La, \La_n}(\gamma,\eta)-\Move^{n-\per}_{\La, \La_p}(\gamma,\eta) 
=\left(\sum_{j=-n}^{-p-1}+\sum_{j=p}^{n-1}  \right)\int_{I_j} \Psi_n\ \d(\gamma-\leb).
\qe
By applying the mean value theorem to $\Psi_n$ and  Lemma \ref{le:estimove}(c), we obtain for  any $x\in I_j$,
$$
\left| \Psi_n(x) - \Psi_n\left(\frac{j}{2}\right) \right| \leq  \frac{8 W_1(\eta,\gamma_\La)}{\dist(x,\La)^2}, 
$$
and thus,  for any $-n\leq j<n$,
$$
\left|\int_{I_j}\Psi_n\ \d(\gamma-\leb)-\Psi_n\left(\frac j2\right) \Discr_{I_j}(\gamma)\right|\leq   \frac{8 W_1(\eta,\gamma_\La)}{\dist(I_j,\La)^2}\ \big(\Discr_{I_j}(\gamma)+1\big).
$$
Since $p$ is arbitrarily large and $\La$ is fixed, there exists $c>0$ such that, for any $|j|\geq p$,
$$
\dist(I_j,\La)\geq \frac jc.
$$
Combined with \eqref{Mex1}, we obtain
\begin{multline}
\label{bouge}
\left|\Move^{n-\per}_{\La, \La_n}(\gamma,\eta)-\Move^{n-\per}_{\La,\La_p}(\gamma,\eta) -\left(\sum_{j=-n}^{-p-1}+\sum_{j=p}^{n-1}\right)\Psi_n\left(\frac j2\right) \Discr_{I_j}(\g)\right|\\
\leq 8c^2W_1(\eta,\gamma_\La) \left(\sum_{j=-n}^{-p-1}+\sum_{j=p}^{n-1}\right) \frac{\Discr_{I_j}(\g)+1}{j^2} .
\end{multline}
By performing a summation by parts, we can write
\begin{align}
\sum_{j=p}^{n-1}\Psi_n\left(\frac j2\right) \Discr_{I_j}(\g) & =\Psi_n\left(\frac n2\right) \Discr_{(0,\frac{n}{2}]}(\g)-\Psi_n\left(\frac p2\right) \Discr_{(0,\frac{p}{2}]}(\g) \nonumber\\ & \qquad+ \sum_{j=p}^{n-1}\left(\Psi_n\left(\frac {j}2\right)-\Psi_n\left(\frac {j+1}2\right)\right)\Discr_{(0,\frac{j+1}{2}]}(\g).
\end{align}
Using again the mean value theorem and Lemma \ref{le:estimove}(c), we have
\eq
\left|\sum_{j=p}^{n-1}\left(\Psi_n\left(\frac {j}2\right)-\Psi_n\left(\frac {j+1}2\right)\right)\Discr_{(0,\frac{j+1}{2}]}(\g)\right|  
\le 8 c^2W_1(\eta,\gamma_\La)  \sum_{j=p}^{n-1} \frac{\left|\Discr_{(0,\frac{j+1}{2}]}(\g)\right|}{j^2}
\qe
and similar estimates holds for the sum where $j$ ranges from $-n$ to $-p-1$. Moreover, since $x/\dist(x,\La)$ is bounded when $x\notin 2\La$, it follows from Lemma \ref{le:estimove}(c) that there exists $\kappa>0$ independent on $\eta,\gamma$ such that 
\eq
\sup_{x\notin 2\La}|x\Psi_n(x)|\leq \kappa W_1(\eta,\gamma_\La).
\label{bouge2}
\qe
As a consequence, we obtain from  \eqref{bouge}--\eqref{bouge2},
\begin{eqnarray}
\label{bouge2la}
\left|\Move^{n-\per}_{\La, \La_n}(\gamma,\eta)\right.  - \, & \left. \Move^{n-\per}_{\La, \La_p}(\gamma,\eta)\right|  \nonumber
 &\le 2\kappa W_1(\eta,\gamma_\La) \left( \frac{\left| \Discr_{(0,\frac{n}{2}]}(\g)\right|}n+\frac{\left| \Discr_{(0,\frac{p}{2}]}(\g)\right|}p\right)\\ 
&&+ 2\kappa W_1(\eta,\gamma_\La) \left( \frac{\left| \Discr_{(-\frac{n}{2},0]}(\g)\right|}n+\frac{\left| \Discr_{(-\frac{p}{2},0]}(\g)\right|}p\right)\nonumber\\
&+  8 c^2W_1(\eta,\gamma_\La) & \,\, \left(\sum_{j=p}^{n-1} \frac{\left|\Discr_{(0,\frac{j+1}{2}]}(\g)\right|}{j^2} + \sum_{j=-n}^{-p-1} \frac{\left|\Discr_{(\frac{j+1}{2},0]}(\g)\right|}{j^2} \right)\nonumber\\
&& +  8 c^2W_1(\eta,\gamma_\La)\left(\sum_{j=-n}^{-p-1}+\sum_{j=p}^{n-1}\right) \frac{\Discr_{I_j}(\g)+1}{j^2}\nonumber\\
& &=: W_1(\eta,\gamma_\La) \, \Err_{n,p}(\gamma),
\end{eqnarray}
Given any $L>0$, we obtain from  \eqref{bouge2la} and the upper bound $W_1(\eta,\gamma_\La)\leq |\gamma_\La|\,|\La| $ that
\eq
\label{finalboundE}
\E_{\Qnbeta}\big[\widetilde{\mathsf E}\, \bs 1_{|\gamma_\La|\leq L}\big]\leq L  |\La| \,\E_{Q_{n,\beta}}\big[ \Err_{n,p}(\gamma) \big].
\qe

Finally, we use Cauchy-Schwarz inequality and Lemma \ref{le:disc}, to obtain
$$  
\E_{Q_{n,\beta}}\left(\frac{|\Discr_{(0,\frac{n}{2}]}(\g)|}{n}\right) \le  \sqrt{\frac{C_\beta}{2 n}}.
$$
and
$$ \E_{Q_{n,\beta}}\left( \sum_{j=p}^{n-1} \frac{\left|\Discr_{(0,\frac{j+1}{2}]}(\g)\right|}{j^2}\right) \le \sqrt{\frac{C_\beta}2} \sum_{j=p}^\infty \frac{\sqrt {j+1}}{j^2}, $$
and this yields that 
$\E_{Q_{n,\beta}}\big[\Err_{n,p}(\gamma)\big]$ can be made arbitrarily small when $p\leq n$ are large enough. The lemma follows from \eqref{finalboundE}.

\end{proof}

\subsubsection{Proof of Lemma \ref{lem:definimoveinf} and Lemma \ref{lem:truncerrorinfinitecase}}
\label{sec:proofdefinimoveinf}
\label{subsection:lemoveinf}

\begin{proof}
Let $P$ be a stationary point process such that $\E_P[\W(\gamma)]<\infty$. We start by showing that, for $P$-a.e. $\gamma\in\Conf(\R)$ and  every $\eta\in\Conf(\La)$ such that $|\eta|=|\gamma_\La|$, the sequence $\{\M_{\La, \La_p}(\eta, \gamma)\}_{p \geq 1}$ is a Cauchy sequence, thus proving Lemma \ref{lem:definimoveinf}. 

As in the proof of Lemma \ref{le:move}, we introduce the move functions with  background,
$$
\Move_{\La,\La_p}(\gamma,\eta):=\int_{\La_p\setminus\La} \Psi \, \d(\gamma-\Leb).
$$
Since by definition,
$$
\M_{\La, \La_p}(\gamma,\eta) = \Move_{\La, \La_p}(\gamma,\eta) + \int \Psi \,\Leb_{\La_p \setminus \La},
$$
it follows from Lemma \ref{le:estimove}(a) that it is enough to show that $\{\Move_{\La, \La_p}(\eta, \gamma)\}_{p \geq 1}$ is a Cauchy sequence, uniformly in $\eta$.  For any $m\geq p$, by following the same steps as in the proof of Lemma \ref{le:move}, using estimates on $\Psi$ instead of $\Psi_n$, we have with $\Err_{m,p}(\gamma)$ defined in \eqref{bouge2la}, 
\eq
\label{bouge2lainf}
\left|\Move_{\La, \La_m}(\gamma,\eta)-\Move_{\La, \La_p}(\gamma,\eta)\right|  \leq W_1(\eta,\gamma)\,\Err_{m,p}(\gamma) \leq |\gamma_\La| |\La|\,\Err_{m,p}(\gamma).
\qe
Now, using Lemma \ref{lem:energytodiscr}, we obtain
$$ 
\E_P\left( \sum_{j=p}^\infty \frac{\left|\Discr_{(0,\frac{j+1}{2}]}(\g)\right|}{j^2}\right) \le \sqrt{C(C+ \E_P(\mathbb{W}))} \sum_{j=p}^\infty \frac{\sqrt{ j+1}}{j^2} <\infty
$$
so that, 
$$
\sum_{j=p}^\infty \frac{\left|\Discr_{(0,\frac{j+1}{2}]}(\g)\right|}{j^2} < + \infty  \qquad P\text{-a.s}
$$ 
and in particular,
$$ 
\lim_{ p \to \infty} \sum_{j=p}^{\infty} \frac{\left|\Discr_{(0,\frac{j+1}{2}]}(\g)\right|}{j^2} = 0 \qquad P\text{-a.s}.$$
By similar arguments, we have 
$$ 
\lim_{p \to \infty} \sum_{j=p}^{\infty} \frac{|\Discr_{I_j}(\g)|+1}{j^2} = 0 \qquad P\text{-a.s},
$$
and the same holds true for the sums involving negative $j$'s. Moreover, since Lemma~\ref{lem:Discrvers0} yields that 
$$
\lim_{m\to\infty}\frac{\left| \Discr_{(0,\frac{m}{2}]}(\g)\right|}m = 0 \qquad P\text{-a.s},
$$
we have obtained that
$$
\lim_{p\to\infty}\lim_{m\to\infty} \Err_{m,p}(\gamma) =0 \qquad P\text{-a.s}
$$
and our claim follows from \eqref{bouge2lainf}.

Moreover, the previous estimates show that,
$$
\left|\Move_{\La, \R}(\gamma,\eta)-\Move_{\La, \La_p}(\gamma,\eta)\right|  \leq  |\gamma_\La| |\La|\,\Err_{\infty,p}(\gamma)
$$
where
\begin{align}
\Err_{\infty,p}(\gamma)& := 2\kappa  \left(\frac{\left| \Discr_{(0,\frac{p}{2}]}(\g)\right|}p+\frac{\left| \Discr_{(-\frac{p}{2},0]}(\g)\right|}p\right)\\ 
&+  8 c^2 \left(\sum_{j=p}^{\infty} \frac{\left|\Discr_{(0,\frac{j+1}{2}]}(\g)\right|}{j^2} + \sum_{j=-\infty}^{-p-1} \frac{\left|\Discr_{(\frac{j+1}{2},0]}(\g)\right|}{j^2} \right)\nonumber\\
& +  8 c^2\left(\sum_{j=-\infty}^{-p-1}+\sum_{j=p}^{\infty}\right) \frac{\Discr_{I_j}(\g)+1}{j^2}\nonumber\\
\end{align}
exists $P$-a.s. Now the proof of \eqref{PAD} is exactly the same as in Lemma \ref{le:move} but using the estimates on $\Psi$ instead of $\Psi_n$.
\end{proof}

\section{Number-rigidity for solutions of canonical DLR equations}
\label{sec:rigidity}
In this section we prove that $\sineb$ is number-rigid in the sense of \cite{ghosh2017rigidity}, that is  part \ref{Thm1A} of Theorem~\ref{theo:DLRmain}. In fact, we prove that any stationary process satisfying  \eqref{canonicalDLR} is number-rigid, which is the main result of this section. We say that a point process is \emph{stationary} when it is invariant under translations of the configurations $\gamma\mapsto \gamma+x:=\{x+y:\;y\in\gamma\}$ for any $x\in\R$ . Recall also that $f_{\La,\R}$ was introduced in~\eqref{fKernel}.

\begin{definition}[Canonical DLR] Let us fix $\beta>0$. We say that a stationary point process $P$ on $\R$  satisfies the \emph{canonical DLR equations} if $\E_P[\mathbb W(\g)]<\infty$ and  
\begin{equation}
\tag{canonical DLR}
\label{canDLR}
\E_{P}(f-f_{\La, \R}) = 0
\end{equation}
for every bounded Borel set $\Lambda\subset\R$ and every bounded Borel function $f:\Conf(\R)\to\R$.
\end{definition}

The assumption that $P$ has finite renormalized energy $\E_P[\mathbb W(\g)]$ is here to ensure that the move functions and thus $f_{\La,\R}$ are well-defined. Our goal is now to prove:

\begin{theorem} 
\label{Theo-rigidity}
If  $P$ is a stationary point process on $\R$ that satisfies \ref{canDLR}, then $P$ is number-rigid.
\end{theorem}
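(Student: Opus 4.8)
The plan is to argue by contradiction, along the lines announced in the introduction. Assume $P$ satisfies \ref{canDLR} but is \emph{not} number-rigid, and reach an absurdity from the fact that $g(x)=-\log|x|$ is long-range. First I would reduce to non-rigidity of the centred boxes $\Lambda_R=[-R/2,R/2]$, $R$ large: if $P$ is rigid with respect to every $\Lambda_R$, then for any bounded Borel $\Lambda$ one picks $R$ with $\Lambda\subset\Lambda_R$ and writes $|\gamma_\Lambda|=|\gamma_{\Lambda_R}|-|\gamma_{\Lambda_R\setminus\Lambda}|$; since $\Lambda_R^c\subset\Lambda^c$ and $\Lambda_R\setminus\Lambda\subset\Lambda^c$, both terms on the right are $\sigma(\gamma_{\Lambda^c})$-measurable, hence so is $|\gamma_\Lambda|$. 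So non-rigidity yields, for all large $R$, an integer $N=N(R)$ (of order $R$, by the discrepancy bound below) and a set $\mathcal G_R\subset\Conf(\Lambda_R^c)$ of positive probability on which both $P(|\gamma_{\Lambda_R}|=N\mid\gamma_{\Lambda_R^c})>0$ and $P(|\gamma_{\Lambda_R}|=N+1\mid\gamma_{\Lambda_R^c})>0$ — the reduction to two \emph{consecutive} values being obtained by enlarging the box by a thin set near $\Lambda_R$ and conditioning further on its (few) points.

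Next I would extract the Campbell-measure structure forced by the canonical DLR equations. By Theorem~\ref{theo:DLRweak}\ref{Thm2Cbis}, on the event $\{|\gamma_{\Lambda_R}|=m\}$ the configuration inside $\Lambda_R$ is, conditionally on $\gamma_{\Lambda_R^c}$, the finite log-gas with density $\rho_{\Lambda_R^c}$. Removing one uniformly chosen interior point and using the exchangeability of that density, the remaining $m-1$ points acquire the multiplicative weight $x\mapsto\omega(x\mid\gamma_{\Lambda_R^c})\prod_{w}|w-x|^\beta$, up to the scalar $m\,Z(\gamma_{\Lambda_R^c},m-1)/Z(\gamma_{\Lambda_R^c},m)$. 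Summing over $m$, the first-order reduced Campbell measure of $P$, restricted to $\{x\in\Lambda_R\}$, is mutually absolutely continuous with respect to $\Leb_{\Lambda_R}\otimes\mathcal R_R$ for a suitable non-zero measure $\mathcal R_R$ on $\Conf(\R)$, with Radon--Nikodym density $(x,\gamma)\mapsto\omega(x\mid\gamma_{\Lambda_R^c})\prod_{w\in\gamma_{\Lambda_R}}|w-x|^\beta$. On $\mathcal G_R$ this says that a point may be inserted \emph{anywhere} in $\Lambda_R$, the whole "insertion cost" being carried by that weight and by the partition-function ratio $Z(\gamma_{\Lambda_R^c},N+1)/Z(\gamma_{\Lambda_R^c},N)$. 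This is the "particular structure" of the Campbell measure that I then want to contradict.

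The final step is where the long range enters. Let $R\to\infty$. By Lemma~\ref{le:disc} (via Lemma~\ref{lem:energytodiscr} and $\E_P[\W(\gamma)]<\infty$) one has $\E_P[\Discr_{\Lambda_R}^2(\gamma)]\le CR$, so $N(R)=R+o(R)$ and a $P$-typical $\gamma$ has $o(k)$ discrepancy on scale $k$ (Lemma~\ref{lem:Discrvers0}). Combining this with the definiteness of the exterior potential (Theorem~\ref{theo:DLRweak}\ref{Thm2Bbis}), one controls $\omega(\cdot\mid\gamma_{\Lambda_R^c})$ on $\Lambda_R$ and the ratios $Z(\gamma_{\Lambda_R^c},N+1)/Z(\gamma_{\Lambda_R^c},N)$: since $g=-\log$ does not decay at infinity, the energetic cost of inserting one point into a box of size $R$ is of order $R\log R$, uniformly, instead of the $O(1)$ cost of a summable pair potential. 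Fed back into the Campbell identity of the previous step — and, if necessary, iterated so as to insert several points and thereby perturb the conditional law of $|\gamma_{\Lambda_R}|$ given $\gamma_{\Lambda_R^c}$ — this forces that conditional law to be incompatible either with the normalization $\E_P[|\gamma_{\Lambda_R}|]=R$ or with $\Var(|\gamma_{\Lambda_R}|)\le CR$ as $R\to\infty$. The resulting contradiction proves that $P$ is number-rigid; moreover the same scheme, with $-\log$ replaced by a general long-range potential and $\R$ by $\R^d$, gives Theorem~\ref{DLRrigidityGeneral}, the only extra inputs being the definiteness of the exterior potential and the matching discrepancy estimates.

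I expect the main obstacle to be this last step: making quantitative the heuristic that, for a long-range interaction, the per-point insertion cost grows with the size of the box, and showing that this is incompatible with the Campbell structure above unless $\mathcal R_R$ degenerates (i.e.\ unless $P$ is rigid). Concretely, one must bound $\omega(\cdot\mid\gamma_{\Lambda_R^c})$ and the partition-function ratios in terms of the discrepancies of $\gamma$; this is exactly the place where short-range models — which are \emph{not} number-rigid, so where the argument must break down — would fail, and where the finiteness $\E_P[\W(\gamma)]<\infty$, through the discrepancy estimates of Section~\ref{sec:disc}, is essential.
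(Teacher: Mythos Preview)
Your outline shares the overall shape of the paper's argument (contradiction via a Campbell-measure identity forced by \ref{canDLR}, exploited through the long range of $g$), but it misses two load-bearing ingredients and leaves the decisive step unsubstantiated.

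\textbf{Ergodic reduction.} The paper does not work with $P$ directly. It first disintegrates $P$ into ergodic components $P_\eta$ (Proposition~\ref{splitErgo}) and shows rigidity is inherited upward (Proposition~\ref{rigiditemixing}). Ergodicity is then used twice: to upgrade ``not rigid'' from a positive-probability statement to a full-measure one (Corollary~\ref{nonrigide ergodic}: there is an $n\ge1$ with $P(\NoRigid_n^P)=1$), and later to apply the ergodic theorem to a shifted functional. Your reduction to two \emph{consecutive} counts $N,N+1$ via ``enlarging the box by a thin set'' is not justified and is in any case not what is needed; the paper only extracts some fixed $n\ge1$ for which one can add or remove $n$ points with positive conditional probability, and works with the $n$-th Campbell measure throughout.

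\textbf{The contradiction mechanism.} Your third step is only a heuristic (``insertion cost of order $R\log R$'' versus variance $O(R)$), and you acknowledge it as the main obstacle. The paper's actual route is different and concrete. From non-rigidity plus ergodicity one upgrades the canonical Campbell description $\d\CP^{(n)}=\e^{-\beta\h}\,\d\Leb^{\otimes n}\otimes\d\Q_n$ to a \emph{grand canonical} one with $\Q_n\ll P$, i.e.\ $\d\CP^{(n)}=\create_n(\g)\,\e^{-\beta\h}\,\d\Leb^{\otimes n}\otimes\d P$ (Theorem~\ref{Theoabsurde}). Two algebraic identities for $\create_n$ (translation covariance and Lemma~\ref{lem:createbougepoints}, which compares $\create_n(\g)$ with $\create_n(\g\setminus\bv x_n\cup\bv y_n)$) then allow the following: for a test function counting $n$-tuples in $[0,1]$ and in $[m,m+1]$, one bounds $\CP^{(n)}(\Test_{m,K})$ below by $\e^{-Cg(m)}$ times an $m$-independent positive integral, after an application of the ergodic theorem to the shifted part. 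Since $g(m)=-\log m\to-\infty$, the Ces\`aro mean $\tfrac1M\sum_{m\le M}\CP^{(n)}(\Test_{m,K})$ diverges, contradicting the trivial bound $\CP^{(n)}(\Test_{m,K})\le(K+n)^nK^n$. No partition-function ratio asymptotics, no $R\log R$ cost estimate, and no variance comparison are used; the role of the discrepancy bounds is only to make the move functions (and hence $\h$) well-defined, not to close the contradiction. Your proposed endgame would need a genuinely new argument to be made rigorous.
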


Thus, Theorem~\ref{theo:DLRmain}\ref{Thm1A} follows from Theorem~\ref{theo:DLRweak}\ref{Thm2Cbis} and Theorem~\ref{Theo-rigidity}. As a consequence of Theorem~\ref{theo:DLRmain}\ref{Thm1A}, Theorem~\ref{theo:DLRweak}\ref{Thm2Cbis} upgrades to Theorem~\ref{theo:DLRmain}\ref{Thm1C} and, since Theorem~\ref{theo:DLRmain}\ref{Thm1B} has already been proven in the previous section, the proof of our main theorem is complete.

  The proof of Theorem \ref{Theo-rigidity} is based on canonical and grand canonical descriptions of a Gibbs point process via its Campbell measures. These descriptions have been studied intensively in the seventies and  eighties, see for instance \cite{GeorgiiLN79,Kozlov,Nguyen-Zessin,Wakolbinger-Eder}. The proof goes by contradiction and follows three steps:

\paragraph{Step 1:} First, we show in Section~\ref{sec:Campbell} that any point process satisfying the canonical DLR equations admits a canonical description via its Campbell measures, see Theorem~\ref{representationCPn}. 

\paragraph{Step 2:}  Next, if we further assume that the process is ergodic and, for the sake of contradiction, \textbf{not} number-rigid, then we show in Section~\ref{sec:rigidergo} that this representation can be extended into a {grand canonical} version, see Theorem~\ref{Theoabsurde}. 

\paragraph{Step 3:} Finally, we show that the grand canonical representation yields a contradiction because of the long range of the logarithmic interaction, see Section \ref{sectionconclusionproof}. Roughly speaking, we use this representation to move points far away from the origin and show that the configurations obtained should have a much larger weight than it is allowed.\\

It turns out these three steps can be performed in a much more general setting than the one dimensional logarithmic interaction. This leads to a more general result than Theorem~\ref{Theo-rigidity} that we present in Section~\ref{Sec:general}, see Theorem~\ref{DLRrigidityGeneral}; we focused on the one dimensional log-gas for the sake of the presentation. 

\paragraph{Convention:}For convenience, if $\bv x_n=(x_1,\ldots,x_n)\in\R^n$, given any $\gamma\in\Conf(\R)$ with an abuse of notation we write $\gamma\setminus \bv x_n$  (resp. $\gamma\cup \bv x_n$, etc) instead of $\gamma\setminus \{\xn\}$ (resp. $\gamma\cup \{\xn\}$, etc).
Moreover, given $\gamma\in\Conf(\R)$,  the sum
$$
\sum_{\bv x_n\subset\gamma}
$$
means that we are summing over all ordered  $n$-tuples of points from the configuration~$\gamma$, namely
$$
\sum_{\bv x_n \subset\gamma} f(\bv x_n)=\sum_{ \substack{x_1,\ldots,x_n\in\gamma\\x_i\neq x_j \text{ for } i\neq j}} f(x_1,\ldots,x_n).
$$
The reader should have in mind that in the following the configuration $\g$ will be simple since it comes from a point process.  

\subsection{A first consequence of the canonical DLR equations}

The following corollary will be useful in the sequel.

\begin{corollary}
\label{DLRcharge} Let $P$ be a stationary point process on $\R$ that satisfies \ref{canDLR}.  Then, for any disjoint bounded Borel sets $B_1,\ldots,B_k\subset\R$ with positive Lebesgue measure and any integers $n_1,\ldots,n_k$,  we have
$$
P\big(|\g_{B_1}|=n_1,\ldots,|\g_{B_k}|=n_k\big)>0.
$$
\end{corollary}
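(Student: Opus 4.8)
The plan is to read off the positivity directly from the canonical DLR equations \ref{canDLR}, applied to a large enough window, exploiting the fact that the conditional density $\rho_{\La^c}$ they produce is strictly positive Lebesgue-almost everywhere. Recall that $\E_P[\W(\g)]<\infty$ is built into the definition of \ref{canDLR}, so that the exterior potentials $\omega(\cdot\,|\g_{\La^c})$ and the partition functions $Z(\g_{\La^c},\cdot)$ are well-defined, finite and positive for such a $P$ by Lemma~\ref{lem:definimoveinf} and the algebraic manipulation \eqref{linkGibbsbetaens}.

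\textbf{Step 1 (a window with the prescribed count).} Set $\Lambda:=B_1\cup\cdots\cup B_k$ and $m:=n_1+\cdots+n_k$. First I would note that $P$ has intensity one: by stationarity $\E_P[|\g_{\La_L}|]=\rho L$ for a fixed $\rho\ge 0$, which is finite because $\E_P[\Discr_{\La_L}^2]<\infty$ by Lemma~\ref{lem:energytodiscr}, and that same lemma gives $(\rho-1)^2L^2=\E_P[\Discr_{\La_L}]^2\le\E_P[\Discr_{\La_L}^2]\le C(C+\E_P[\W])\,L$ for every $L$, hence $\rho=1$. Then I would pick $L$ large enough that $\Lambda\subset\La_L$, that $|\La_L\setminus\Lambda|>0$, and that $L\ge m+1$. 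Since $\E_P[|\g_{\La_L}|]=L>m$, we get $P(|\g_{\La_L}|\ge m)>0$, so there is an integer $N'\ge m$ with $P(|\g_{\La_L}|=N')>0$; put $q:=N'-m\ge 0$.

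\textbf{Step 2 (applying canonical DLR).} I would then apply \ref{canDLR} in the window $\La_L$ to the bounded Borel function $f(\g):=\mathbf{1}\{|\g_{B_1}|=n_1,\ldots,|\g_{B_k}|=n_k\}$. Expanding $f_{\La_L,\R}$ from Definition~\ref{defi:nonperiodicDLR} (extended to ``$p=\infty$'' via Lemma~\ref{lem:definimoveinf}), using \eqref{linkGibbsbetaens} and $Z_{\La_L,\R}(\g)=Z(\g_{\La_L^c},|\g_{\La_L}|)\in(0,\infty)$, one rewrites \ref{canDLR} as
$$
P\big(|\g_{B_1}|=n_1,\ldots,|\g_{B_k}|=n_k\big)=\int\Bigg[\int_{\La_L^{N}}f\big(\{x_1,\ldots,x_N\}\cup\g_{\La_L^c}\big)\,\rho_{\La_L^c}(x_1,\ldots,x_N)\,\prod_{j=1}^{N}\d x_j\Bigg]P(\d\g),
$$
with $N=|\g_{\La_L}|$ and $\rho_{\La_L^c}$ as in \eqref{def:rhoweak}. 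Because each $B_i\subset\La_L$, the exterior $\g_{\La_L^c}$ meets no $B_i$, so on the event $\{|\g_{\La_L}|=N'\}$ the integrand equals $\mathbf{1}_A(x_1,\ldots,x_{N'})$, where $A:=\{(x_1,\ldots,x_{N'})\in\La_L^{N'}:\ \#\{j:x_j\in B_i\}=n_i\ \text{for all}\ i\}$.

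\textbf{Step 3 (positivity).} Finally I would observe that $A$ has positive Lebesgue measure in $\La_L^{N'}$: it contains every coordinate permutation of $B_1^{n_1}\times\cdots\times B_k^{n_k}\times(\La_L\setminus\Lambda)^{q}$, of measure $\prod_{i=1}^{k}|B_i|^{n_i}\,|\La_L\setminus\Lambda|^{q}>0$ by the hypotheses on the $B_i$ and the choice of $L$; while $\rho_{\La_L^c}>0$ Lebesgue-a.e.\ on $\La_L^{N'}$, since $\prod_{j<\ell}|x_j-x_\ell|^\beta$ vanishes only on the null diagonal, $\omega(\cdot\,|\g_{\La_L^c})>0$ a.e.\ by definiteness of the exterior potential, and $Z(\g_{\La_L^c},N')\in(0,\infty)$. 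Hence $\int_A\rho_{\La_L^c}(x_1,\ldots,x_{N'})\prod_j\d x_j>0$ for $P$-a.e.\ $\g$ with $|\g_{\La_L}|=N'$, and restricting the outer integral above to the positive-$P$-measure event $\{|\g_{\La_L}|=N'\}$ gives $P(|\g_{B_1}|=n_1,\ldots,|\g_{B_k}|=n_k)>0$. The only genuinely delicate point is Step~1 — ruling out that the process is so sparse that the prescribed particle count is never realized in any bounded window — which is precisely where the renormalized-energy/discrepancy input (forcing the intensity to equal one) is used; everything else is bookkeeping around the explicit, everywhere-positive conditional density furnished by \ref{canDLR}.
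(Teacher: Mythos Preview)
Your proof is correct and takes essentially the same approach as the paper: pick a large window, apply \ref{canDLR}, and use that the conditional density $\rho_{\La^c}$ is Lebesgue-a.e.\ positive. The only differences are cosmetic---your Step~1 explicitly derives intensity one from Lemma~\ref{lem:energytodiscr} whereas the paper simply invokes $\E_P|\g|=+\infty$, and the paper concludes by contradiction rather than by fixing a specific $N'$---but the substance is identical.
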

In particular, Corollary~\ref{sinebcharge} now follows from Theorem~\ref{theo:DLRweak}.

\begin{proof} Set $n:=\sum_jn_j$.  Let $\La\subset\R$ be a bounded Borel set so that $\Lambda \setminus (B_1 \cup \ldots \cup B_k)$ has positive Lebesgue measure
and  
$P(|\g_{\La}|\geq n)>0$. The latter is ensured as soon as $\La$ is large enough since $P$ is stationary and in particular $\E_P|\g|=+\infty$. If we set 
$f(\g):=\prod_{j=1}^k \1_{|\g_{B_j}|=n_j}$, then by applying \ref{canDLR} on $\La$ we have
$$ P\big(|\g_{B_1}| =n_1,\ldots,|\g_{B_k}|=n_k\big)=  \int  \prod_{j=1}^k f(\{x_1,\ldots,x_{|\g_\La|}\})\rho_{\La^c}(x_1,\ldots,x_{|\g_\La|})\prod_{i=1}^{|\g_\La|}\d x_i \,P(\d\g).
$$
Now, if $P\big(|\g_{B_1}| =n_1,\ldots,|\g_{B_k}|=n_k\big)=0$, then  for $P$-a.e. $\g$,
$$
\int \prod_{j=1}^k f(\{x_1,\ldots,x_{|\g_\La|}\})\rho_{\La^c}(x_1,\ldots,x_{|\g_\La|})\prod_{i=1}^{|\g_\La|}\d x_i=0,
$$ 
but by the definition of $\rho_{\La^c}$, see \eqref{def:rhoweak}, this yields that $|\g_\La|<n$ for $P$-a.e $\g$, which is not possible. \end{proof}

\subsection{Campbell measures}
\label{sec:Campbell}
In this section, we introduce the Campbell measures and prove that the canonical DLR equations yield a result on the representation  for these measures.

\begin{definition}[Campbell measure] The \emph{Campbell measure} of order $n\ge 1$ of a point process $P$ on $\R$ is the measure $\CP^{(n)}$ on $\R^n\times \Conf(\R)$  defined by, for any positive Borel test function $f$,
\begin{equation}
\label{def:CPn}
\CP^{(n)}(f):=\int \sum_{
\bv x_n \subset\g
 } f(\xn,\g\setminus\xn) \,P(\d\g).
 \end{equation}
\end{definition}
The Campbell measure encodes the joint distribution of $n$ typical points $\xn$ and their neighborhood $\g \setminus\xn$ for a random configuration $\g$ with law $P$. For example, in the case of a Poisson point process $\Pi$ of intensity measure $\rho$, the Campbell measure is given by 
$$
\mathsf{C}_{\Pi}^{(n)}= \rho^{\otimes n} \otimes \Pi,
$$
which is known as the Slivnyak-Mecke Theorem, see e.g. \cite[Section 3.2]{moller2003statistical}.

In the following, we need to introduce the following \emph{cost function}.

\begin{definition}[Cost of moving $n$ points from $0$ in $\gamma$]
\label{definitionhn} Let $\g\in\Conf(\R)$ and  take $\bv x_n\subset\g$. We consider the cost of moving  the $n$-tuple $(0, \dots, 0)$ to $\bv x_n=(x_1,\ldots,x_n)$ defined by
\begin{equation}
\label{costxg}
 \h(\bv x_n,\g) = \sum_{i < j} g(x_i-x_j) + \lim_{p\to \infty} \int_{-p}^p \sum_{i=1}^n \big(g(x_i-y) - g(y)\big)\g(\d y)
 \end{equation}
provided the limit exists. Note that $ \h(\bv x_n,\g) $ does not depend on the ordering of $\bv x_n$.
\end{definition}

\begin{remark}
If $P$ is a stationary process satisfying $\E_P[\W(\gamma)] < + \infty$, then the limit  \eqref{costxg} exists for $P$-a.e. $\gamma$ and any $\xn$ (the proof is the same as that of Proposition \ref{lem:definimoveinf}).  
\end{remark}

The following theorem is a first description of the structure of Campbell measures for point processes satisfying canonical DLR equations.
\begin{theorem}
\label{representationCPn}
Let $P$ be a stationary point process on $\R$ satisfying  \ref{canDLR}. Then, for every $n\ge 1$, there exists a Borel measure $\Q_n$ on $\Conf(\R)$ such that $\CP^{(n)}$ is absolutely continuous with respect to $\Leb^{\otimes n} \otimes \Q_n$ and has density
\begin{equation}
\label{CPnrep1}
\frac{ \d\CP^{(n)}}{\d\Leb^{\otimes n} \otimes \Q_n} (\xn,\g) = \e^{-\beta \h(\xn,\g)}.
\end{equation}
\end{theorem}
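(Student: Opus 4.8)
The plan is to apply the canonical DLR equations \eqref{canDLR} on a bounded window $\La$ to the function $F(\g):=\sum_{\bv x_n\subset\g}f(\bv x_n,\g\setminus\bv x_n)$, for test functions $f$ supported on $\{\bv x_n\in\La^n\}$, and then glue the resulting local representations into a single measure $\Q_n$. First I would fix $n\ge1$, a bounded Borel set $\La$, and a nonnegative measurable $f:\R^n\times\Conf(\R)\to\R_+$ with $f(\bv x_n,\cdot)=0$ unless $\bv x_n\in\La^n$. Since $|\g_\La|<\infty$ for $P$-a.e.\ $\g$, the function $F$ is a.s.\ finite; applying \eqref{canDLR} to $F\wedge M$ and letting $M\to\infty$ by monotone convergence, and noting that only $n$-tuples extracted from $\g_\La$ contribute to $F$, one gets $\CP^{(n)}(f)=\E_P[F]$ and
\[
\CP^{(n)}(f)=\int\Big[\int_{\La^{N}}\sum_{\substack{i_1,\ldots,i_n\\ \text{distinct}}}f\big((x_{i_1},\ldots,x_{i_n}),\{x_j:j\notin\{i_1,\ldots,i_n\}\}\cup\g_{\La^c}\big)\,\rho_{\La^c}(x_1,\ldots,x_N)\prod_{j=1}^N\d x_j\Big]\,P(\d\g),
\]
with $N:=|\g_\La|$. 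Since $\rho_{\La^c}$ is symmetric, relabelling the $N!/(N-n)!$ summands turns this into
\[
\CP^{(n)}(f)=\int\frac{N!}{(N-n)!}\int_{\La^{N-n}}\int_{\La^{n}}f\big(\bv x_n,\{x_{n+1},\ldots,x_N\}\cup\g_{\La^c}\big)\,\rho_{\La^c}(x_1,\ldots,x_N)\prod_{j=1}^N\d x_j\,P(\d\g).
\]

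The next and key step is a factorization of $\rho_{\La^c}$. Writing $g(x)=-\log|x|$, so that $|x_j-x_k|^\beta=\e^{-\beta g(x_j-x_k)}$, and recalling from \eqref{DLRweight} that $\omega(x\mid\g_{\La^c})=\e^{-\beta\phi_{\La^c}(x)}$ with $\phi_{\La^c}(x):=\lim_{p\to\infty}\int_{-p}^p\big(g(x-y)-g(y)\big)\,\g_{\La^c}(\d y)$ (well defined and finite for $P$-a.e.\ $\g$ and all $x\in\La$, as in the proof of Theorem~\ref{theo:DLRweak}\ref{Thm2Bbis}, whose argument applies to any stationary $P$ with $\E_P[\W]<\infty$), I would split the double sum $\sum_{j<k}^Ng(x_j-x_k)$ and the product $\prod_{j=1}^N\omega(x_j\mid\g_{\La^c})$ in \eqref{def:rhoweak} according to whether indices are $\le n$ or $>n$. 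Collecting all terms depending on $(x_1,\ldots,x_n)$, and using that $x_{n+1},\ldots,x_N$ lie in the bounded set $\La$, so that for $p$ large $\int_{-p}^p\big(g(x_i-y)-g(y)\big)\big(\sum_{j>n}\delta_{x_j}+\g_{\La^c}\big)(\d y)=\sum_{j>n}\big(g(x_i-x_j)-g(x_j)\big)+\int_{-p}^p\big(g(x_i-y)-g(y)\big)\g_{\La^c}(\d y)$, one checks the identity
\[
\rho_{\La^c}(x_1,\ldots,x_N)=\e^{-\beta\,\h\big((x_1,\ldots,x_n),\,\{x_{n+1},\ldots,x_N\}\cup\g_{\La^c}\big)}\;R_{\g_{\La^c},N}(x_{n+1},\ldots,x_N),
\]
where $R_{\g_{\La^c},N}$ is a nonnegative function not depending on $(x_1,\ldots,x_n)$; the leftover $(x_1,\ldots,x_n)$-free terms are $\log Z(\g_{\La^c},N)$, the pair interactions and exterior weights among $x_{n+1},\ldots,x_N$, and a factor $\prod_{j>n}|x_j|^{\beta n}$ produced by the $-g(x_j)$ shifts above.

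Substituting, I would let $\Q_n^{\La}$ be the Borel measure on $\Conf(\R)$ given by
\[
\int h\,\d\Q_n^{\La}:=\int\frac{N!}{(N-n)!}\int_{\La^{N-n}}h\big(\{x_{n+1},\ldots,x_N\}\cup\g_{\La^c}\big)\,R_{\g_{\La^c},N}(x_{n+1},\ldots,x_N)\prod_{j=n+1}^N\d x_j\,P(\d\g),
\]
so that $\CP^{(n)}(f)=\int\big[\int_{\La^n}f(\bv x_n,\g)\,\e^{-\beta\h(\bv x_n,\g)}\,\d\bv x_n\big]\Q_n^{\La}(\d\g)$; in other words, on $\{\bv x_n\in\La^n\}$ the measure $\CP^{(n)}$ is absolutely continuous with respect to $\Leb^{\otimes n}\otimes\Q_n^{\La}$ with density $\e^{-\beta\h}$. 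To remove the $\La$-dependence, I would observe that for $\La\subset\La'$, comparing the two representations over test functions supported on $\{\bv x_n\in\La^n\}$ forces $\e^{-\beta\h(\bv x_n,\g)}\mathbf 1_{\La^n}(\bv x_n)\,\d\bv x_n\,(\Q_n^\La-\Q_n^{\La'})(\d\g)=0$; integrating $\bv x_n$ over $\La^n$ and using that $I_\La(\g):=\int_{\La^n}\e^{-\beta\h(\bv x_n,\g)}\,\d\bv x_n\in(0,\infty)$ for $\Q_n^\La$- and $\Q_n^{\La'}$-a.e.\ $\g$ (positivity being clear, finiteness a partition-function estimate of the type \eqref{DLRZ}), one deduces $\Q_n^\La=\Q_n^{\La'}$. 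Hence $\Q_n:=\Q_n^{\La_m}$ is independent of $m$ for $\La_m=[-m,m]\uparrow\R$, and since $\R^n=\bigcup_m\La_m^n$, a last monotone-convergence passage upgrades the local representations to $\CP^{(n)}\ll\Leb^{\otimes n}\otimes\Q_n$ with density $\e^{-\beta\h}$ on all of $\R^n\times\Conf(\R)$, which is the claim.

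I expect the factorization identity for $\rho_{\La^c}$ to be the main obstacle: it hinges on the nontrivial fact that the exterior weight $\omega(\cdot\mid\g_{\La^c})$, equivalently the potential $\phi_{\La^c}$, is well defined despite the long-range logarithm — precisely the content of part~\ref{Thm1B} — and on matching exactly the $(x_1,\ldots,x_n)$-dependent part with the cost function $\h$ of Definition~\ref{definitionhn}, cleanly absorbing the residual $g(x_j)$ shifts into the $\La^c$-dependent factor. A secondary technical point is the $\La$-independence of $\Q_n^\La$, which needs the local integrability of $\bv x_n\mapsto\e^{-\beta\h(\bv x_n,\g)}$; note that $\e^{+\beta\h}$ is \emph{not} locally integrable when $\beta\ge1$, so one genuinely must disintegrate window by window rather than reweight $\CP^{(n)}$ directly.
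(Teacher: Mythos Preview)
Your proof is correct and follows essentially the same route as the paper: apply the canonical DLR equations on a window $\La$, perform the change of variables $\tilde\eta=\eta\setminus\bv x_n$, verify that what remains factorizes as $\e^{-\beta\h(\bv x_n,\cdot)}$ times a quantity independent of $\bv x_n$, and then check $\La$-independence of the resulting measure $\Q_n^\La$. Your factorization of $\rho_{\La^c}$ is exactly the paper's statement that $\bv x_n\mapsto F_\La(\bv x_n,\tilde\eta\cup\bv x_n,\g)$ is constant, written in the $\rho_{\La^c}$ language rather than the $(\H_\La,\M_{\La,\R})$ language.

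One small remark: your closing worry that ``$\e^{+\beta\h}$ is not locally integrable, so one cannot reweight $\CP^{(n)}$ directly'' is unfounded. The paper does exactly that --- it sets $\tCP^{(n)}:=\e^{\beta\h}\,\CP^{(n)}$ --- and this is harmless because multiplying a nonnegative measure by a nonnegative measurable function always yields a well-defined measure; no integrability is needed. The payoff is that $\tCP^{(n)}$ then equals a genuine product $\Leb^{\otimes n}\otimes\Q_n^\La$, so the $\La$-independence follows by testing against $\mathbf 1_{\La^n}\otimes g$ and reading off $|\La|^n\int g\,\d\Q_n^\La=|\La|^n\int g\,\d\Q_n^{\La'}$, without your extra step involving $I_\La(\g)>0$. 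Your argument for $\La$-independence is also fine, just slightly longer.
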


\begin{proof}
Consider the tilted measure,
\begin{equation}
\label{def:tCP}
\tCP^{(n)} := \e^{\beta \h}\, \CP^{(n)}.
\end{equation}
It is thus enough to show that $\tCP^{(n)}=\Leb^{\otimes n} \otimes \Q_n$ for some measure $\Q_n$ on  $\Conf(\R)$.

Let $f:\R^n \times \Conf(\R)\to[0,\infty)$ be a measurable function satisfying $f(\bv x,\gamma)=0$ as soon as $\bv x\notin K$ for some compact set $K\subset\R^d$. Let $\Lambda \subset \R$ be a bounded Borel set such that $K\subset\La^n$. Using the definition \eqref{def:CPn}, \ref{canDLR}, and setting
$$
F_\La(\xn,\eta,\g) := \H_{\La}(\eta)+\M_{\La,\R}(\eta, \g) - \h(\xn,\eta\cup \g_{\La^c} \setminus\xn),
$$
we find
\begin{align*}
\label{CampbellABC}
\tCP^{(n)}(f) & = \int \sum_{\xn\subset\g_\Lambda } 
\e^{\beta \h(\xn,\g\setminus\xn)}f(\xn, \g\setminus \xn) P(\d\g)\\
& = \int   \sum_{\xn\subset\eta } 
\e^{\beta \h(\xn,\,\eta\,\cup\, \g_{\La^c}\setminus\xn)} f(\xn, (\eta \setminus \xn) \cup \gamma_{\La^c})\\
&\qquad \times \frac{1}{Z_{\La,\R}(\g)}\e^{- \beta (\H_{\La}(\eta)+\M_{\La,\R}(\eta, \g))} \,\B_{|\g_\La|,\Lambda}(\d\eta) P(\d\g)\\
& = \int   \sum_{\xn\subset\eta }f(\xn, (\eta \setminus \xn) \cup \gamma_{\La^c})\, \frac{\e^{-\beta F_\La(\xn,\eta,\g)}}{Z_{\La,\R}(\g)}\,\B_{|\g_\La|,\Lambda}(\d\eta) P(\d\g)\\
& = \int\B_{n,\Lambda}(\d\xn) \int \frac{\1_{|\gamma_\La|\geq n} \,|\gamma_\La|!}{(|\gamma_\La|-n)!} f(\xn, \tilde\eta \cup \gamma_{\La^c})\, \frac{\e^{-\beta F_\La(\xn,\tilde\eta\,\cup\,\xn,\g)}}{Z_{\La,\R}(\g)}\,\B_{|\g_\La|-n,\Lambda}(\d\tilde\eta) P(\d\g)
\end{align*}
where we made the change of variables $\tilde\eta:=\eta\setminus\xn.$ Since one can check by direct computation that the map
$$
\xn\mapsto F_\La(\xn,\tilde\eta\cup\xn,\g)=: F_\La(\tilde\eta,\g),
$$
is constant, we obtain the factorization
\eq
\label{facto}
\tCP^{(n)}(f)=\int  f(\xn,\zeta) \, \Leb^{\otimes n}(\d\xn)\, \Q_n^{\La}(\d \zeta)
\qe
where the measure $ \Q_n^{\La}$ is defined for any measurable map $g:\Conf(\R)\to[0,\infty)$ by,
$$
\int g(\zeta)\Q_n^{\La}(\d \zeta):=\int \frac{\1_{|\gamma_\La|\geq n} \,|\gamma_\La|!}{|\La|^n(|\gamma_\La|-n)!} g(\tilde\eta \cup \gamma_{\La^c})\, \frac{\e^{-\beta F_\La(\tilde\eta,\g)}}{Z_{\La,\R}(\g)}\,\B_{|\g_\La|-n,\Lambda}(\d\tilde\eta) P(\d\g).
$$
Finally, since \eqref{facto} holds true after replacing $\La$ by any bounded Borel set $\La'\supset\La$, we obtain by taking $f:=\1_{\La^n}\otimes g$, 
$$
\int g(\zeta)\Q_n^{\La}(\d \zeta)=\int g(\zeta)\Q_n^{\La'}(\d \zeta)
$$
for any measurable map $g:\Conf(\R)\to[0,\infty)$, and thus $\Q_n:=\Q_n^\La$ does not depend on $\La$.


\end{proof}

\subsection{Rigidity and ergodicity}
\label{sec:rigidergo}

We now provide a convenient characterization of number-rigidity. First, in the next lemma we show that, when one wants to prove number-rigidity, it is enough to restrict to an increasing countable family instead of all bounded measurable subsets $\La\subset\R$.

\begin{lemma} 
\label{RigidityCountable}Given any countable family $(B_m)_{m\geq 1}$ of  bounded Borel subsets  of $\R$ satisfying $\cup_m B_m=\R$,  $P$ is number-rigid if and only if, for any $m\geq 1$,  there exists a measurable function $\Number_{B_m}:\Conf(B_m^c)\to\N$ such that $|\gamma_{B_m}|=\Number_{B_m}(\g_{B_m^c})$ $P$-a.s.
\end{lemma}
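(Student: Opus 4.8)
The statement to prove is an equivalence. One direction is trivial: if $P$ is number-rigid, then by definition for \emph{every} bounded Borel set $\La$—in particular for each $B_m$—there is a measurable $\Number_\La:\Conf(\La^c)\to\N$ with $|\gamma_\La|=\Number_\La(\gamma_{\La^c})$ $P$-a.s.; taking $\La=B_m$ gives the desired family. So the work is entirely in the converse: assuming we have, for each $m$, a measurable $\Number_{B_m}:\Conf(B_m^c)\to\N$ with $|\gamma_{B_m}|=\Number_{B_m}(\gamma_{B_m^c})$ $P$-a.s., we must produce, for an \emph{arbitrary} bounded Borel set $\La\subset\R$, a measurable $\Number_\La:\Conf(\La^c)\to\N$ with $|\gamma_\La|=\Number_\La(\gamma_{\La^c})$ $P$-a.s.

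\textbf{Key steps for the converse.} Fix a bounded Borel set $\La$. The plan is to build $|\gamma_\La|$ out of the rigid quantities $|\gamma_{B_m}|$ by a covering-and-inclusion-exclusion argument. First I would note that, since $\La$ is bounded and $\cup_m B_m=\R$, there is some index $m_0$ with $\La\subset B_{m_0}$ (the $B_m$ need not be nested, but finitely many of them cover the compact closure of $\La$, and one can replace them by their union—still in the $\sigma$-algebra generated by the family—using that a finite union of rigid sets is rigid: $|\gamma_{B_i\cup B_j}|$ is a measurable function of $\gamma_{(B_i\cup B_j)^c}$ because $|\gamma_{B_i\cup B_j}| = |\gamma_{B_i}| + |\gamma_{B_j}| - |\gamma_{B_i\cap B_j}|$ once one also knows $B_i\cap B_j$ is rigid; to handle the intersection I would instead argue directly: $|\gamma_{B_i\cup B_j}|$ is determined by the exterior configuration $\gamma_{(B_i\cup B_j)^c}$ since, writing $A=B_i\setminus B_j$, $B=B_j$, we have $\gamma_{(B_i\cup B_j)^c}\subset\gamma_{B_i^c}$ so $\Number_{B_i}$ is a function of more information than $\gamma_{(B_i\cup B_j)^c}$—this needs care, see below). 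Granting $\La\subset B:=B_{m_0}$, the complement $B\setminus\La$ is again a bounded Borel set, and $|\gamma_\La| = |\gamma_B| - |\gamma_{B\setminus\La}|$. Now $|\gamma_B|=\Number_B(\gamma_{B^c})$ and, crucially, $\gamma_{B^c}$ is a measurable function of $\gamma_{\La^c}$ since $B^c\subset\La^c$; so $|\gamma_B|$ is a measurable function of $\gamma_{\La^c}$. It remains to show $|\gamma_{B\setminus\La}|$ is also a measurable function of $\gamma_{\La^c}$; but $B\setminus\La\subset\La^c$, so $|\gamma_{B\setminus\La}|$ is literally a function of the restriction $\gamma_{\La^c}$, trivially measurable. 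Combining, $|\gamma_\La|$ equals a measurable function of $\gamma_{\La^c}$, $P$-a.s., which is the claim.

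\textbf{The genuine obstacle.} The one delicate point is obtaining $\La\subset B_{m_0}$ for a \emph{single} index, when the given family $(B_m)$ is only assumed to cover $\R$ and may be neither nested nor closed under finite unions. What is actually available is: finitely many sets $B_{m_1},\dots,B_{m_r}$ cover $\La$ (since $\overline\La$ is compact and the $B_m$, while merely Borel, can be enlarged to open sets—or one simply covers the finitely many points... no: better, cover $\overline\La$ by open balls each contained in some $B_m^\circ$; if the $B_m$ have empty interior this fails). The clean fix is to \emph{not} insist on a single index: write $\La\subset B:=B_{m_1}\cup\dots\cup B_{m_r}$ and show $B$ is rigid by induction on $r$, the inductive step being that if $B'$ is rigid and $B''=B_{m}$ is rigid then $B'\cup B''$ is rigid. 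For this step, set $C=B'\cup B''$; then $|\gamma_C|=|\gamma_{B'}|+|\gamma_{B''\setminus B'}|$, where $|\gamma_{B'}|=\Number_{B'}(\gamma_{(B')^c})$ and $\gamma_{(B')^c}$ is a function of $\gamma_{C^c}$ together with $\gamma_{C\setminus B'}=\gamma_{B''\setminus B'}$; and $|\gamma_{B''\setminus B'}|$, together with the \emph{configuration} $\gamma_{B''\setminus B'}$ itself, must be shown to be a function of $\gamma_{C^c}$. This last assertion is exactly the rigidity of $B''\setminus B'$ as a \emph{configuration-valued} (not just count-valued) statement, which is \emph{stronger} than number-rigidity and need not follow—so this naive induction does not close. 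The correct argument, which I would carry out, avoids unions entirely: given arbitrary bounded Borel $\La$, pick \emph{any} single $B_m$ meeting $\La$—better, use that $\bigcup_m B_m = \R$ forces, for the bounded set $\La$, existence of $m$ with $\La \cap B_m$ of full... no. Ultimately the honest route is: the hypothesis must be read as giving rigidity for the \emph{generating} family, and one invokes a monotone-class / $\pi$-$\lambda$ argument on the collection $\mathcal A$ of bounded Borel sets that are rigid, showing $\mathcal A$ is closed under the operations needed (proper differences $A\subset B$, countable disjoint unions) and contains a $\pi$-system generating the Borel sets; the subtlety is that "rigid" must be promoted to "the pair $(|\gamma_A|,\text{extra data})$ is $\gamma_{A^c}$-measurable" for the algebra to close, and verifying this promotion is where essentially all the effort lies. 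I expect this bookkeeping—identifying the exact strengthened statement that is stable under the set operations and checking it for the generators $B_m$—to be the main obstacle; everything else is routine measurability.
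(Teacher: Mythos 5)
Your first paragraph already contains the paper's entire proof: one direction is definitional, and for the converse one picks $m$ with $\La\subset B_m$, writes $|\g_\La|=|\g_{B_m}|-|\g_{B_m\setminus\La}|$, and observes that $|\g_{B_m}|=\Number_{B_m}(\g_{B_m^c})$ is $\g_{\La^c}$-measurable because $B_m^c\subset\La^c$, while $|\g_{B_m\setminus\La}|$ is $\g_{\La^c}$-measurable because $B_m\setminus\La\subset\La^c$. That is word for word what the paper does, and nothing more.

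The obstacle you then wrestle with --- that a general countable covering family need not contain a single $B_{m}$ with $\La\subset B_{m}$ --- is a fair reading of the statement as literally written, but the paper simply assumes such an $m$ exists ("let $\La$ be bounded and $m\geq 1$ such that $\La\subset B_m$"), consistent with the sentence introducing the lemma, which speaks of restricting to an \emph{increasing} countable family, and with the only application made of it, namely $B_m=\La_m=[-\tfrac m2,\tfrac m2]$. Under that intended reading your proof is complete after the first paragraph. The remaining two thirds of your writeup --- the induction on finite unions (which, as you correctly diagnose, does not close because rigidity of the count $|\g_{B''\setminus B'}|$ is weaker than knowledge of the configuration $\g_{B''\setminus B'}$ itself, which is what $\Number_{B'}$ needs as input) and the monotone-class plan you leave unexecuted --- is a detour you should delete rather than pursue. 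If you want to be scrupulous, state the hypothesis you actually use: every bounded Borel set is contained in some $B_m$. With that phrasing the lemma is proved by your first paragraph alone; without it, neither your argument nor the paper's establishes the converse for, say, a partition of $\R$ into unit intervals, and it is not clear the statement would even remain true.
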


\begin{proof} Let $\La\subset\R$ be measurable and bounded and $m\geq 1$ such that $\La\subset B_m$. If one assumes that $|\gamma_{B_m}|$ is a measurable function of $\gamma_{B_m^c}$, then by writing $|\g_\La|=|\g_{B_m}|-|\g_{B_m\setminus\La}|$ we see that $|\g_\La|$ coincides almost surely with a measurable function of $\gamma_{\La^c}$.
\end{proof}

Next, we need a few definitions. 

\begin{definition} For any bounded Borel set $\Lambda$ and for a fixed $\gamma$, we introduce the event 
$$
\Ext^{\g}_{\La} := \big\{\eta \in \Conf(\R):\; \eta_{\La^c} = \g_{\La^c} \big\}
$$
of having the same exterior configuration than $\gamma$ outside of $\La$. We also need the event 
$$
\Aa_{k,\La}:=\big\{\eta \in \Conf(\R):\;  |\eta_\La|=k\big\}
$$
of having $k\geq 0$ points in $\La$ and, given a point process $P$ on~$\R$ and $n\geq 1$,
\begin{multline}
\label{def:NoRigidrm}
\NoRigid^P_{n, \La} := \Big\lbrace \g \in \Conf(\R)  :\; \exists k\in\N,\\
 P\left(\Aa_{k,\La} \Big| \Ext^{\g}_{\La} \right) > 0\; \text{ and }\; P\left(\Aa_{k+n,\La} \Big| \Ext^{\g}_{\La} \right) > 0 \Big\rbrace.
\end{multline} 
The latter can be informally understood as the set of configurations $\g$ for which, with positive probability under $P$, it is possible to generate two configurations with a number of points in $\La$ which differs by $n$, conditionally on having the same exterior configuration given by $\g_{\La^c}$. Formally, since we may have  $P(\Ext^{\g}_{\La})=0,$ one has to proceed more carefully. Let us define $\mathscr F_{\La^c}$ as the $\sigma$-algebra 
generated by the random variable $\g\mapsto \gamma_{\La^c}.$
Consider the conditional expectation $\E_P[ \1_{\Aa_{k,\La}  } | \mathscr F_{\La^c}]$, which is a $\mathscr F_{\La^c}$-measurable random variable, and thus can be seen as a measurable function from $\Conf(\La^c)$ to $[0,1]$.  We set 
$$P\left(\Aa_{k,\La} \Big| \Ext^{\g}_{\La} \right):=\E_P[ \1_{\Aa_{k,\La}  } | \mathscr F_{\La^c}](\g_{\La^c}).$$
Finally, recalling that $\La_m:=[-\tfrac m2,\tfrac m2]$, we consider the events
$$
\NoRigid^P_n := \bigcup_{m\ge 1}  \NoRigid^P_{n, \La_m}
$$
and the set of \emph{non-rigid configurations}
\begin{equation}
\label{def:confstar}
\NoRigid^P_{*} := \bigcup_{n\ge 1} \NoRigid^P_n.
\end{equation}
\end{definition}

\begin{proposition}
\label{caracterisationRigidite} A point process $P$ on $\R$ is number-rigid if and only if \linebreak $P(\NoRigid^P_*)=~0$. 
\end{proposition}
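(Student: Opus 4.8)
The plan is to combine Lemma~\ref{RigidityCountable} with a pointwise analysis of the conditional law of $|\g_\La|$ given the exterior $\sigma$-algebra $\mathscr F_{\La^c}$. Fix $m\ge 1$ and abbreviate $\La:=\La_m$. First I would record that for $P$-a.e.\ $\g$ the sequence $\big(P(\Aa_{k,\La}\mid\Ext^\g_\La)\big)_{k\ge 0}$ is a genuine probability distribution on $\N$: each term lies in $[0,1]$ $P$-a.s.\ by the definition of conditional expectation, and by the conditional monotone convergence theorem $\sum_{k\ge 0}\E_P[\1_{\Aa_{k,\La}}\mid\mathscr F_{\La^c}]=\E_P\big[\sum_{k\ge 0}\1_{\Aa_{k,\La}}\mid\mathscr F_{\La^c}\big]=1$ $P$-a.s. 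I will call this sequence the \emph{conditional law of $|\g_\La|$}.

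The crucial point is the chain of equivalences, valid for $P$-a.e.\ $\g$: the conditional law of $|\g_\La|$ is a Dirac mass $\iff$ there is exactly one index $k$ with $P(\Aa_{k,\La}\mid\Ext^\g_\La)>0$ $\iff$ $\g\notin\bigcup_{n\ge 1}\NoRigid^P_{n,\La}$. The first equivalence uses that the conditional law is a probability distribution, so a unique positive atom forces a Dirac; the second is just unfolding \eqref{def:NoRigidrm}, since the existence of two distinct indices $k<k'$ with positive conditional mass is exactly the statement $\g\in\NoRigid^P_{k'-k,\La}$. Since each $\NoRigid^P_{n,\La}$ is measurable (a preimage of $(0,\infty)^2$ under countably many measurable maps $\g\mapsto P(\Aa_{j,\La}\mid\Ext^\g_\La)$), this already yields: $P\big(\bigcup_{n\ge 1}\NoRigid^P_{n,\La}\big)=0$ if and only if the conditional law of $|\g_\La|$ is $P$-a.s.\ a Dirac mass.

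Next I would verify that ``the conditional law of $|\g_\La|$ is $P$-a.s.\ a Dirac mass'' is equivalent to ``$|\g_\La|$ agrees $P$-a.s.\ with an $\mathscr F_{\La^c}$-measurable function'', i.e.\ (by the Doob--Dynkin lemma) with a measurable function of $\g_{\La^c}$. For the forward direction, set $\Number_\La(\g_{\La^c}):=\min\{k\ge 0:\ P(\Aa_{k,\La}\mid\Ext^\g_\La)>0\}$; since each $\g\mapsto P(\Aa_{k,\La}\mid\Ext^\g_\La)$ is $\mathscr F_{\La^c}$-measurable and the conditional law sums to $1$, this defines an $\mathscr F_{\La^c}$-measurable $\N$-valued function, and $|\g_\La|=\Number_\La(\g_{\La^c})$ $P$-a.s.\ because the conditional law is then the Dirac mass at $\Number_\La(\g_{\La^c})$ (this last step is a routine conditioning argument, partitioning $\Conf(\La^c)$ according to the value of $\Number_\La$). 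Conversely, if $|\g_\La|=h(\g_{\La^c})$ $P$-a.s.\ for some measurable $h$, then $\E_P[\1_{\Aa_{k,\La}}\mid\mathscr F_{\La^c}]=\1_{\{h(\g_{\La^c})=k\}}$ $P$-a.s., so the conditional law is $\delta_{h(\g_{\La^c})}$, a Dirac mass.

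Finally I would assemble everything. By Lemma~\ref{RigidityCountable} applied to the family $(\La_m)_{m\ge 1}$, whose union is $\R$, $P$ is number-rigid if and only if for every $m\ge 1$ the variable $|\g_{\La_m}|$ agrees $P$-a.s.\ with a measurable function of $\g_{\La_m^c}$; by the two equivalences above this holds if and only if $P\big(\bigcup_{n\ge 1}\NoRigid^P_{n,\La_m}\big)=0$ for every $m\ge 1$. Since $\NoRigid^P_*=\bigcup_{m\ge 1}\bigcup_{n\ge 1}\NoRigid^P_{n,\La_m}$ is a countable union of measurable sets, ``$P\big(\bigcup_{n\ge 1}\NoRigid^P_{n,\La_m}\big)=0$ for all $m$'' is equivalent to $P(\NoRigid^P_*)=0$, which is the claim. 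I do not expect any genuine obstacle: the only mild care needed is the bookkeeping of $P$-null sets and the measurability of the threshold $\Number_\La$, both of which are routine.
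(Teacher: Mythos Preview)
Your proof is correct and follows essentially the same approach as the paper's: both arguments hinge on Lemma~\ref{RigidityCountable} to reduce to the family $(\La_m)_{m\ge 1}$, and then analyze the conditional law of $|\g_{\La_m}|$ given $\mathscr F_{\La_m^c}$, observing that this conditional law is a Dirac mass precisely when no two distinct values have positive conditional probability. The paper organizes this as two separate implications (number-rigid $\Rightarrow$ $P(\NoRigid^P_*)=0$ and not number-rigid $\Rightarrow$ $P(\NoRigid^P_*)>0$), whereas you set up an explicit chain of equivalences and spell out the measurability and bookkeeping more carefully; the substance is the same.
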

\begin{proof}

First, let us assume that $P$ is number-rigid, namely that for any bounded Borel set $\La\subset\R,$ there exists  a measurable function $\Number_\La :\Conf(\La^c)\to\N$ such that  
$$
|\gamma_\La|= \Number_\La(\gamma_{\La^c})\quad \text{ for } P\text{-a.e. }  \gamma.
$$
Then for any $m \ge 1$, for $P$-a.e. $\g$ and $k \neq \Number_{\La_m}(\g_{\La_m^c})$,
$$
P\left(\Aa_{k,\La_m} \big| \Ext^{\g}_{\La_m} \right)=0,
$$ 
and $P(\NoRigid^P_{n, \La_m})=0$ for any $ n    \geq 1$, from which $P(\NoRigid^P_*)=0$ follows. 

Conversely, let us assume that $P$ is \textit{not} number-rigid. Then by Lemma~\ref{RigidityCountable} there exists an integer $m\geq 1$, two integers $0\le k_1<k_2$, and an event $E$ such that $P(E)>0$ and, for any $\g \in E$,
 $$
P\left(\Aa_{k_1, \La_m} \big| \Ext^{\g}_{\La_m} \right)>0, \qquad  P\left(\Aa_{k_2, \La_m} \big| \Ext^{\g}_{\La_m} \right)>0. 
$$
Thus, if $n:=k_2-k_1$,  we have $P(\NoRigid^P_{n, \La_m})>0$ and hence $P(\NoRigid^P_*)>0$.
\end{proof}

We next make use in a crucial way of the notion of ergodic point processes (which are in this work always assumed to be stationary). An event $E \subset \Conf(\R)$  is $P$-a.s. translation-invariant, if  for $P$-a.e. $\gamma\in\Conf(\R)$ and  every $x\in\R$, $\1_E(\gamma)=\1_E (\gamma+x).$

\begin{definition}[Ergodic point process]
We say that a  point process $P$ on $\R$ is \textit{ergodic} if it is stationary and, for any  $P$-a.s. translation-invariant event $E$, we have $P(E)\in\{0,1\}$.
\end{definition}

Proposition~\ref{caracterisationRigidite} has the following consequence for ergodic point processes.

\begin{corollary}\label{nonrigide ergodic}
Let $P$ be an ergodic point process on $\R$ which is \textit{not} number-rigid. Then there exists $n \ge 1$ such that $P(\NoRigid^P_n)=1$ and furthermore
\begin{equation} 
\label{proprigidite} 
\lim_{m \to \infty}  P(\NoRigid^P_{n, \La_m})=1.
\end{equation}
\end{corollary}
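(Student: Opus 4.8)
The plan is to combine three ingredients: the characterization of non-rigidity in Proposition~\ref{caracterisationRigidite}, a monotonicity property of the sets $\NoRigid^P_{n,\La}$ in the window $\La$, and the ergodicity hypothesis together with continuity of $P$ along increasing sequences of events. Since $P$ is not number-rigid, Proposition~\ref{caracterisationRigidite} gives $P(\NoRigid^P_*)>0$; and as $\NoRigid^P_* = \bigcup_{n\ge 1}\NoRigid^P_n$ is a countable union, I can fix once and for all an integer $n\ge 1$ with $P(\NoRigid^P_n)>0$. It then remains to upgrade this to $P(\NoRigid^P_n)=1$ and to deduce $P(\NoRigid^P_{n,\La_m})\to 1$ as $m\to\infty$; both will come from the two lemmas below.

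The first is a \emph{stationarity transfer}: for every bounded Borel $\La$ and every $x\in\R$, one has $\g\in\NoRigid^P_{n,\La}$ if and only if $\g+x\in\NoRigid^P_{n,\La+x}$, simply because the conditional expectations $\E_P[\1_{\Aa_{k,\La}}\mid\mathscr F_{\La^c}]$ appearing in the definition of $\NoRigid^P_{n,\La}$ commute with translations, $P$ being stationary. The second — the technical core — is a \emph{monotonicity lemma}: if $\La\subseteq\La'$ are bounded Borel sets, then $\NoRigid^P_{n,\La}\subseteq\NoRigid^P_{n,\La'}$ up to a $P$-null set. To prove it I would set $D:=\La'\setminus\La$, note $\mathscr F_{{\La'}^c}\subseteq\mathscr F_{\La^c}$, abbreviate $h_k:=\E_P[\1_{\Aa_{k,\La}}\mid\mathscr F_{\La^c}]$ and $g_j:=\E_P[\1_{\Aa_{j,\La'}}\mid\mathscr F_{{\La'}^c}]$, and use $|\eta_{\La'}|=|\eta_\La|+|\eta_D|$ together with the $\mathscr F_{\La^c}$-measurability of $|\eta_D|$ to obtain, after conditioning through $\mathscr F_{\La^c}$ and keeping a single term, the pointwise bound $g_j\ \ge\ \E_P\big[\,\1_{\{|\eta_D|=\ell\}}\,h_{j-\ell}(\eta)\,\big|\,\mathscr F_{{\La'}^c}\,\big]$ for all integers $j,\ell$. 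For each fixed $k$, consider a configuration $\g$ with $h_k(\g)>0$ and $h_{k+n}(\g)>0$, set $\ell_0:=|\g_D|$ and $j:=k+\ell_0$; the event $A:=\{\eta:\ |\eta_D|=\ell_0,\ h_k(\eta)>0\}$ depends on $\g$ only through the two integers $\ell_0,k$, and $\g\in A$. Since the random variable $\1_{\{|\eta_D|=\ell_0\}}h_k(\eta)$ is positive precisely on $A$, the displayed bound with $\ell=\ell_0$ gives $g_j(\g)>0$ as soon as $P(A\mid\mathscr F_{{\La'}^c})(\g)>0$, and the latter holds for $P$-a.e.\ such $\g$ by the elementary identity $P\big(C\cap\{\E_P[\1_C\mid\mathscr F_{{\La'}^c}]=0\}\big)=0$, valid for every event $C$ and applied to each member of the countable family $\{\eta:\ |\eta_D|=\ell,\ h_k(\eta)>0\}$, $\ell,k\in\N$. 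Running the same argument with $k+n$ in place of $k$ gives $g_{j+n}(\g)>0$, hence $\g\in\NoRigid^P_{n,\La'}$; taking the union over $k$ proves the lemma.

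Granting the two lemmas, I would conclude as follows. Applying the monotonicity lemma along $\La_1\subseteq\La_2\subseteq\cdots$ shows $(\NoRigid^P_{n,\La_m})_{m\ge1}$ is, up to $P$-null sets, non-decreasing with union $\NoRigid^P_n$. Combining the stationarity transfer with the monotonicity lemma — used to absorb a translated window $\La_m+x$ into a larger $\La_{m'}$ — gives $T_x\NoRigid^P_n=\NoRigid^P_n$ up to a $P$-null set for every $x\in\R$, where $T_x\g=\g+x$; a routine Fubini argument upgrades this to a genuinely translation-invariant event equal to $\NoRigid^P_n$ up to a $P$-null set, so ergodicity forces $P(\NoRigid^P_n)\in\{0,1\}$, and since $P(\NoRigid^P_n)>0$ we obtain $P(\NoRigid^P_n)=1$. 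Finally, by the $P$-a.s.\ monotonicity of $\NoRigid^P_{n,\La_m}$ and continuity of $P$ along increasing unions, $P(\NoRigid^P_{n,\La_m})\nearrow P(\NoRigid^P_n)=1$, which is precisely \eqref{proprigidite}.

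The hard part is the monotonicity lemma: enlarging $\La$ while ``freezing the configuration in $\La'\setminus\La$'' cannot be carried out verbatim, because both the witnessing integer $k$ and the frozen count $|\g_{\La'\setminus\La}|$ depend on the realization $\g$, so there is no single conditioning event of positive probability to work with — this is what forces the detour through the countable family of auxiliary events and the measure-theoretic identity for conditional expectations. Everything else (the stationarity transfer, the upgrade from invariance modulo $P$-null sets to genuine invariance, the use of ergodicity, and continuity of measure) is standard.
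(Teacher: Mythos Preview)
Your proposal is correct and follows the same skeleton as the paper's proof: Proposition~\ref{caracterisationRigidite} gives $P(\NoRigid^P_*)>0$, hence some $P(\NoRigid^P_n)>0$; the event $\NoRigid^P_n$ is $P$-a.s.\ translation-invariant, so ergodicity forces $P(\NoRigid^P_n)=1$; and \eqref{proprigidite} follows by monotone convergence along $\La_m\uparrow\R$. The paper's proof is essentially those three sentences.

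The difference is that you actually \emph{prove} the two facts the paper takes for granted, namely that the family $(\NoRigid^P_{n,\La_m})_{m\ge1}$ is (up to $P$-null sets) increasing in $m$, and that $\NoRigid^P_n$ is $P$-a.s.\ translation-invariant. You correctly identify the monotonicity lemma $\NoRigid^P_{n,\La}\subseteq\NoRigid^P_{n,\La'}$ for $\La\subseteq\La'$ as the non-trivial point: it is not a one-liner, because the witnessing integer $k$ and the count $|\g_{\La'\setminus\La}|$ vary with $\g$, so one genuinely has to pass through the tower property, the $\mathscr F_{\La^c}$-measurability of $|\eta_{\La'\setminus\La}|$, and the countable family of auxiliary events together with the identity $P(C\cap\{\E_P[\1_C\mid\mathscr F_{{\La'}^c}]=0\})=0$. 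Your argument for this is sound. The stationarity-transfer, the Fubini upgrade to a genuinely invariant event, and the final appeal to continuity of measure are routine, as you say (and given the paper's definition of ergodicity, which already allows $P$-a.s.\ invariant events, the Fubini step is in fact only needed to remove the $x$-dependence of the exceptional null sets coming from the monotonicity lemma).

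In short: same route as the paper, but you have supplied the justification behind the words ``$P$-a.s.\ translation-invariant'' and ``monotone convergence'' that the paper leaves implicit.
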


\begin{proof}
If $P$ is an ergodic point process on $\R$ which is not number-rigid, Proposition \ref{caracterisationRigidite}, yields $P(\NoRigid^P_*)>0$ and therefore there exists $n \ge 1$ such that $P(\NoRigid^P_n)>0$. Since the event $\NoRigid^P_n$ is $P$-a.s. translation-invariant this implies that $P(\NoRigid^P_n)=1$. Moreover, \eqref{proprigidite} follows by monotone convergence.
\end{proof}

The interest we have in ergodic processes comes from the following decomposition. 

\begin{proposition} 
\label{splitErgo}
Let $P$ be a  point process on $\R$ which is stationary and satisfies \ref{canDLR}. Let $\mathscr I$ be the $\sigma$-algebra of the translation-invariant Borel sets of $\Conf(\R)$. Then there exists a family of point processes $(P_\eta)_{\eta\in\Conf(\R)}$ such that for $P$-a.e. $\eta$, the point process $P_\eta$ is ergodic, satisfies \ref{canDLR} and  
\eq
P_\eta(\cdot):=\E_P[\bs 1_{\cdot}|\mathscr I](\eta).
\qe
In particular we have the standard decomposition of $P$ via its Gibbsian ergodic phases
\eq
\label{mixingP}
P=\int P_\eta\; P(\d\eta).
\qe
\end{proposition}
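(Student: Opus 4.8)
The plan is to combine the classical ergodic decomposition of stationary point processes with the translation covariance of the canonical Gibbs specification. The only delicate point is that, when transferring \eqref{canDLR} to the ergodic components, one can afford to test the canonical DLR equations against \emph{local} functions only, for which the subtle long‑range compensations underlying Lemma~\ref{lem:definimoveinf} never come into play; this is what makes the argument go through.

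\emph{Ergodic decomposition and finiteness of the energy.} Since $P$ is stationary and $\R$ acts measurably by translations on the Polish space $\Conf(\R)$, the ergodic decomposition theorem (see e.g.\ \cite{Kallenberg}) provides a measurable family $(P_\eta)_{\eta\in\Conf(\R)}$ of ergodic, hence stationary, probability measures with $P_\eta=\E_P[\bs 1_{\cdot}\,|\,\mathscr I](\eta)$ for $P$-a.e.\ $\eta$ and $P=\int P_\eta\,P(\d\eta)$. As $P$ charges only simple configurations, so does $P_\eta$ for $P$-a.e.\ $\eta$, so each such $P_\eta$ is a point process. Moreover, since the renormalized energy $\W$ is bounded from below (see e.g.\ \cite{petrache2017next,Leble:2017mz}), applying the decomposition to $\W+C\ge 0$ gives $\int \E_{P_\eta}[\W]\,P(\d\eta)=\E_P[\W]<\infty$, hence $\E_{P_\eta}[\W]<\infty$ for $P$-a.e.\ $\eta$; by Lemma~\ref{lem:definimoveinf} the move functions $\M_{\La,\R}$, and thus the kernels $f\mapsto f_{\La,\R}$, are then $P_\eta$-a.e.\ well defined, so it is meaningful to ask whether $P_\eta$ satisfies \eqref{canDLR}.

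\emph{Reduction to countable data.} It remains to prove that $P_\eta$ satisfies \eqref{canDLR} for $P$-a.e.\ $\eta$. The canonical Gibbs kernels are consistent (this follows from the energy–splitting identity used in the proof of Proposition~\ref{nperiodicDLR}, in its non‑periodic form), so it is enough to establish \eqref{canDLR} for $P_\eta$ when $\La$ runs over the windows $\La_m$; and, for a fixed ergodic $P_\eta$, the monotone class argument already used to prove Theorem~\ref{theo:DLRweak}\ref{Thm2Cbis} then reduces the test functions to the indicators $\bs 1_E$, with $E$ running over a fixed countable $\pi$-system of local events generating the Borel $\sigma$-algebra. Intersecting the corresponding $P$-full-measure sets over the countably many pairs $(m,E)$, we are left with the following: for fixed $m$ (write $\La=\La_m$) and fixed $E$, set $g:=\bs 1_E-(\bs 1_E)_{\La,\R}$, a bounded Borel function; one must show $\E_{P_\eta}(g)=0$ for $P$-a.e.\ $\eta$, i.e.\ $\E_P[g\,|\,\mathscr I]=0$ $P$-a.s., i.e.\ $\E_P[g\,\psi]=0$ for every bounded $\mathscr I$-measurable $\psi$.

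\emph{Localisation via the mean ergodic theorem.} Approximate $\psi$ in $L^2(P)$ by conditional expectations $\E_P[\phi\,|\,\mathscr I]$ of bounded \emph{local} functions $\phi$ (these are dense in $L^2(P,\mathscr I)$ since bounded local functions are dense in $L^2(P)$), and write $\E_P[\phi\,|\,\mathscr I]=L^2\text{-}\lim_{T\to\infty}\frac1{2T}\int_{-T}^T\phi\circ\theta_x\,\d x$ by the mean ergodic theorem, where $\theta_x$ denotes the shift by $x$. Since $g$ is bounded, everything boils down to showing that $\E_P[\,g\cdot(\phi\circ\theta_y)\,]=0$ for all $|y|$ large enough. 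To see this, use stationarity of $P$ and the translation invariance of $g(x)=-\log|x|$: a change of variables gives $g\circ\theta_{-y}=h-h_{\La+y,\R}$ with $h:=(\bs 1_E)\circ\theta_{-y}$ bounded Borel (in fact local) and $\La+y$ a bounded Borel set. For $|y|$ large, $\La+y$ is disjoint from the compact support of $\phi$, so $\phi$ factors through the kernel $\L_{\La+y,\R}$, i.e.\ $(h\phi)_{\La+y,\R}=\phi\, h_{\La+y,\R}$; applying the DLR equation of Theorem~\ref{theo:DLRweak}\ref{Thm2Cbis} for $P$ to the test function $h\phi$ and the window $\La+y$ yields $\E_P[h\phi]=\E_P[(h\phi)_{\La+y,\R}]=\E_P[\phi\, h_{\La+y,\R}]$, hence $\E_P[(g\circ\theta_{-y})\,\phi]=0$, hence $\E_P[\,g\cdot(\phi\circ\theta_y)\,]=0$ by stationarity. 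As this quantity vanishes outside a fixed bounded range of $y$ and is uniformly bounded by $\|g\|_\infty\|\phi\|_\infty$, Cesàro‑averaging over $y\in[-T,T]$ and letting $T\to\infty$ gives $\E_P[\,g\cdot\E_P[\phi\,|\,\mathscr I]\,]=0$, and therefore $\E_P[g\,\psi]=0$, which is the missing assertion.

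The main obstacle is precisely this last step: the kernels $f_{\La,\R}$ depend on the configuration only through the delicate limit defining $\M_{\La,\R}$, and a translation‑invariant function does \emph{not} in general factor through a Gibbs kernel, so one cannot simply ``localise'' the DLR equations along the ergodic decomposition. The detour above circumvents this by only ever commuting \emph{local} functions past the kernels, pushing the window $\La+y$ to infinity and absorbing the finitely many non‑commuting contributions into a vanishing Cesàro average; checking the translation‑covariance identity $g\circ\theta_{-y}=h-h_{\La+y,\R}$ (a routine change of variables, using that $g$ is even and translation invariant) and the measurable selection in the ergodic decomposition are then the only remaining bookkeeping.
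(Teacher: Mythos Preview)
Your argument is correct and takes a genuinely different route from the paper's. The paper does not verify $\E_P[g\,|\,\mathscr I]=0$ directly; instead it introduces the tail $\sigma$-algebra $\overline{\mathscr F_\infty}:=\bigcap_m\overline{\mathscr F_{\La_m^c}}$ (where $\overline{\mathscr F_{\La^c}}$ is generated by $\gamma_{\La^c}$ together with $|\gamma_\La|$), observes via the pointwise ergodic theorem that $\E_P[\cdot\,|\,\mathscr I]$ is automatically $\mathscr I\cap\overline{\mathscr F_\infty}$-measurable, and then invokes a black-box result of Preston \cite[Theorem~2.2]{Preston} stating that the conditional measures $\E_P[\cdot\,|\,\overline{\mathscr F_\infty}](\xi)$ themselves satisfy \ref{canDLR}; writing $P_\eta$ as a mixture of these tail-conditioned processes concludes. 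Your approach sidesteps both the tail field and Preston's theorem: the ``shift the window to infinity'' trick, combined with the mean ergodic theorem and the observation that a \emph{local} $\phi$ commutes with the kernel $\L_{\La+y,\R}$ once $\La+y$ has cleared the support of $\phi$, gives the orthogonality $\E_P[g\,\psi]=0$ directly. This is more self-contained and makes the role of translation invariance transparent, at the price of two auxiliary verifications you correctly flag: the consistency of the specifications (needed for the reduction to $\La=\La_m$) and the translation covariance $f_{\La,\R}\circ\theta_{-y}=(f\circ\theta_{-y})_{\La+y,\R}$; for the latter one must note that the limit defining $\M_{\La+y,\R}$ is insensitive to a bounded shift of the truncation windows $\La_p$, which follows from the Cauchy estimate in the proof of Lemma~\ref{lem:definimoveinf}.
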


\begin{proof}
 Since $\Conf(\R)$ is a Polish space, there exists a regular conditional probability with respect to $\mathscr I$ (see \cite[Theorem 10.2.2]{Dud02}), namely 
there exists a version of the conditional expectation $A \mapsto \E_P[\bs 1_{A}|\mathscr I]$ such that, for any $\eta \in \Conf(\R)$,
\eq
A \mapsto P_\eta(A):=\E_P[\bs 1_{A}|\mathscr I](\eta)
\qe
defines a probability measure on $\Conf(\R)$  and we  have
$$
P(A)=\E_P\big[\,\E_P[\1_{A} | \mathscr I]\,\big]=\int P_\eta(A)\, P(\d\eta).
$$
Note that $P_\eta$ is a stationary point process for $P$-a.e. $\eta$ since $P$ is a stationary point process and by definition of $\mathscr I$. Moreover, the quantity $\E_{P_\eta}[\mathbb W(\g)]$ is necessarily finite for $P$-a.e. $\eta$ since otherwise $\E_{P}[\mathbb W(\g)]$ would be infinite, and since for any $A\in\mathscr I$ we have
$$
P_\eta(A)=\E_P[\bs 1_A|\mathscr I](\eta)=\1_A(\eta)\in\{0,1\},
$$
we see that $P_\eta$ is ergodic. 
Moreover, let $\overline{\mathscr F_{\La^c}}$ be the $\sigma$-algebra generated by the random variables $\g\mapsto \gamma_{\La^c}$ and $\g\mapsto|\gamma_\La|$, and let $\overline{\mathscr F_{\infty}}:=\cap_{m\geq 1}\overline{\mathscr F_{\La_m^c}}$. 

By the ergodic Theorem, for any local event $A$ and for $P$-a.e. $\eta$

\begin{equation} \label{theoergo}
\E_P[\bs 1_A|\mathscr I](\eta)= \lim_{m\to\infty} \frac{1}{|\La_m|} \sum_{u\in \La_m\cap \mathbb{Z}} \bs 1_A(\eta+u).
\end{equation} 

So $\E_P[\bs 1_A|\mathscr I]$ is in fact $\mathscr I\cap \overline{\mathscr F_{\infty}}$-measurable and for $P$-a.e. $\eta$,  $P_\eta(.)=\E_P[\bs 1_.|\mathscr I\cap \overline{\mathscr F_{\infty}} ]$.

Since $P$ satisfies \ref{canDLR}, \cite[Theorem 2.2]{Preston} states that  there exists a version of $A\mapsto \E_P[\1_A|\overline{\mathscr F_\infty}],$ such that, for any $\xi \in \Conf(\R),$  $\E_P[\1_\cdot|\overline{\mathscr F_\infty}](\xi)$ is a point process on $\R$ that satisfies \ref{canDLR}.  By  writing 
$$
P_\eta (A)= \E_P[\, \E_P[\,\1_A|\overline{\mathscr F_\infty}\,] |\mathscr I\cap \overline{\mathscr F_{\infty}}](\eta)=\int\E_P[\,\1_A|\overline{\mathscr F_\infty}\,](\xi) \,P_\eta(\d\xi)
$$
we see that $P_\eta$ satisfies \ref{canDLR}, and thus $P$ can be written as a mixture of ergodic probability measures satisfying \ref{canDLR}.\end{proof}

Combined with the previous proposition, the next result will allow us to restrict to ergodic processes to prove Theorem~\ref{Theo-rigidity}.
\def \Flm{\mathscr{F}_{\La_m^c}}
\def \sI{\mathscr{I}}
\begin{proposition}
\label{rigiditemixing} 
Let us write $P = \int P_{\eta}\, P(\d \eta)$ as in Proposition \ref{splitErgo}. If $P_\eta$ is number-rigid for $P$-a.e. $\eta$, then $P$ is number-rigid.
\end{proposition}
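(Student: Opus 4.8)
The plan is to transfer number-rigidity from the ergodic components $P_\eta$ to the mixture $P$; the decisive feature will be that the assignment $\eta\mapsto P_\eta$ depends on $\eta$ only through the exterior of a large box.

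First, by Lemma~\ref{RigidityCountable} applied to the family $(\Lambda_m)_{m\ge 1}$, which exhausts $\R$, it suffices to prove, for each fixed $m\ge 1$, that $|\gamma_{\Lambda_m}|$ coincides $P$-a.s.\ with a measurable function of $\gamma_{\Lambda_m^c}$. Write $P=\int P_\eta\,P(\d\eta)$ as in Proposition~\ref{splitErgo}: for $P$-a.e.\ $\eta$ the process $P_\eta$ is ergodic, satisfies \ref{canDLR}, and, by hypothesis, is number-rigid. Since $\E_P[\mathbb W]<\infty$ forces $\E_{P_\eta}[\mathbb W]<\infty$ for $P$-a.e.\ $\eta$, Lemma~\ref{lem:energytodiscr} makes $|\gamma_{\Lambda_m}|$ integrable under $P$ and under $P_\eta$ for $P$-a.e.\ $\eta$. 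Applying Lemma~\ref{RigidityCountable} to $P_\eta$, the conditional expectation $\E_{P_\eta}\!\big[\,|\gamma_{\Lambda_m}|\ \big|\ \mathscr F_{\Lambda_m^c}\,\big]$, seen as a measurable function $\Conf(\Lambda_m^c)\to[0,\infty)$, is $\N$-valued $P_\eta$-a.s.\ and equals $|\gamma_{\Lambda_m}|$ for $P_\eta$-a.e.\ $\gamma$; that is, it is a version of the number function $\Number^{P_\eta}_{\Lambda_m}$ of $P_\eta$.

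The key step will be that $\eta\mapsto P_\eta$ is, modulo $P$-null sets, $\mathscr F_{\Lambda_m^c}$-measurable. Indeed, for a local event $A$, formula~\eqref{theoergo} expresses $\E_P[\mathbf 1_A\mid\mathscr I]$ as the limit of the averages $\frac1{|\Lambda_M|}\sum_{u\in\Lambda_M\cap\Z}\mathbf 1_A(\eta+u)$; modifying $\eta$ inside a fixed bounded window alters only $O(1)$ of these $O(M)$ terms and hence does not change the limit, so $\E_P[\mathbf 1_A\mid\mathscr I]$ is $P$-a.s.\ equal to a $\sigma(\gamma_{\Lambda_k^c})$-measurable function for every $k$. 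Choosing a countable family of local events generating the Borel $\sigma$-algebra of $\Conf(\R)$ (for instance the events $\{|\gamma_{(a,b)}|<c\}$, $a,b,c\in\mathbb Q$) together with a common $P$-null exceptional set, one gets that $\gamma\mapsto P_\gamma$ is $P$-a.s.\ a measurable function of $\gamma_{\Lambda_m^c}$. Combining this with a measurable choice of regular conditional distributions, the map
\[
G(\gamma):=\E_{P_\gamma}\!\big[\,|\zeta_{\Lambda_m}|\ \big|\ \zeta_{\Lambda_m^c}=\gamma_{\Lambda_m^c}\,\big]
\]
is then, modulo $P$-null sets, $\mathscr F_{\Lambda_m^c}$-measurable.

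It then remains to verify that $|\gamma_{\Lambda_m}|=G(\gamma)$ $P$-a.s.\ Writing $P\big(|\gamma_{\Lambda_m}|\neq G(\gamma)\big)=\int P_\eta\big(|\gamma_{\Lambda_m}|\neq G(\gamma)\big)\,P(\d\eta)$, fix $\eta$ in the $P$-full set where $P_\eta$ is ergodic and number-rigid. Since $P_\eta$ is carried by $\{\gamma:P_\gamma=P_\eta\}$ (a standard property of the ergodic decomposition), one has $G(\gamma)=\E_{P_\eta}\!\big[\,|\zeta_{\Lambda_m}|\ \big|\ \zeta_{\Lambda_m^c}=\gamma_{\Lambda_m^c}\,\big]=\Number^{P_\eta}_{\Lambda_m}(\gamma_{\Lambda_m^c})$ for $P_\eta$-a.e.\ $\gamma$, which equals $|\gamma_{\Lambda_m}|$ $P_\eta$-a.s.\ by number-rigidity of $P_\eta$; hence the inner probability vanishes for $P$-a.e.\ $\eta$, and $|\gamma_{\Lambda_m}|=G(\gamma)$ $P$-a.s. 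Being $P$-a.s.\ equal to an $\mathscr F_{\Lambda_m^c}$-measurable function, $|\gamma_{\Lambda_m}|$ is thus a measurable function of $\gamma_{\Lambda_m^c}$ up to a $P$-null set, and Lemma~\ref{RigidityCountable} yields number-rigidity of $P$. The hard part of this argument is the measure-theoretic core — that the ergodic components are "tail"-measurable, hence exterior-measurable, and the measurable selection producing $G$; everything else is routine bookkeeping.
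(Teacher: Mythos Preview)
Your proof is correct and follows essentially the same approach as the paper: both reduce via Lemma~\ref{RigidityCountable} to a countable family of windows, both exploit that $\eta\mapsto P_\eta$ is $\mathscr F_{\Lambda_m^c}$-measurable thanks to the ergodic average representation~\eqref{theoergo}, and both then transfer the number function from $P_\eta$ to $P$. The only difference is packaging: you construct $G(\gamma)=\E_{P_\gamma}[\,|\zeta_{\Lambda_m}|\mid \zeta_{\Lambda_m^c}=\gamma_{\Lambda_m^c}\,]$ directly and invoke the ergodic-decomposition fact that $P_\eta(\{P_\gamma=P_\eta\})=1$, whereas the paper shows the identity $\E_P[\,|\gamma_{\Lambda_m}|\mid\mathscr F_{\Lambda_m^c}\,]=\E_{P_\eta}[\,|\gamma_{\Lambda_m}|\mid\mathscr F_{\Lambda_m^c}\,]$ $P_\eta$-a.s.\ via the tower property for $\mathscr I\cap\mathscr F_{\Lambda_m^c}$, which sidesteps the measurable-selection issue you acknowledge.
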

\begin{proof}
In view of Lemma~\ref{RigidityCountable}, it is enough to show that, for all $m \geq 1$, the equality
$$
|\g_{\La_m}|=\E_{P}\left[\,|\g_{\La_m}|\,|\mathscr F_{\La_m^c}\right],
$$
holds $P$-almost surely.

By assumption, for $P$-a.e. $\eta$, the process $P_{\eta}$ is number-rigid, hence we have
$$
|\g_{\La_m}|=\E_{P_{\eta}}\left[\,|\g_{\La_m}|\,|\mathscr F_{\La_m^c}\right], \text{ $P_{\eta}$-a.s.},
$$
thus it suffices to show
\begin{equation}
\label{identiteconditionnelle}
\E_P \left[ |\g_{\La_m}| \Big| \Flm \right] = \E_{P_{\eta}} \left[ |\g_{\La_m}| \Big| \Flm \right], \text{ $P_{\eta}$-a.s}.
\end{equation}
Both sides of \eqref{identiteconditionnelle} are $\Flm$-measurable random variables. Let $f$ be a bounded, $\Flm$-measurable random variable. We may write
$$
\E_{P_{\eta}} \left[ f \E_{P} \left[ |\g_{\La_m}| \Big| \Flm \right] \right] = \E_{P_{\eta}} \left[ \E_{P} \left[ f |\g_{\La_m}| \Big| \Flm \right] \right],
$$
and by definition of $P_{\eta}$ and \eqref{theoergo} we have, %
$$
\E_{P_{\eta}} \left[ \E_{P} \left[ f |\g_{\La_m}| \Big| \Flm \right] \right]  = \E_P \left[ \E_{P} \left[ f |\g_{\La_m}| \Big| \Flm \right] \Bigg| \sI \cap  \Flm\right](\eta),
$$
where $\sI$ is as in Proposition \ref{splitErgo}. By the “tower property” of conditional expectation, we obtain
\begin{equation}
\label{cote1}
\E_{P_{\eta}} \left[ f \E_{P} \left[ |\g_{\La_m}| \Big| \Flm \right] \right] =  \E_{P} \left[ f |\g_{\La_m}| \Big| \sI \cap  \Flm \right](\eta),
\end{equation}
but by definition of $P_{\eta}$ we have
\begin{equation}
\label{cote2}
\E_{P} \left[ f |\g_{\La_m}| \Big| \sI \cap  \Flm \right](\eta) = \E_{P_{\eta}} \left[ f |\g_{\La_m}|  \right].
\end{equation}
Combining \eqref{cote1} and \eqref{cote2} we see that
$$
\E_{P_{\eta}} \left[ f \E_{P} \left[ |\g_{\La_m}| \Big| \Flm \right] \right] = \E_{P_{\eta}} \left[ f |\g_{\La_m}|  \right] = \E_{P_{\eta}} \left[ f \E_{P_{\eta}} \left[ |\g_{\La_m}| \Big| \Flm \right]  \right],
$$
where the last equality is simply the definition of a conditional expectation. Since this is true for any $f$ bounded and $\Flm$-measurable, we get \eqref{identiteconditionnelle}, which concludes the proof.
\end{proof}

\subsection{Ergodic solutions of DLR equations which are not number-rigid }
Theorem \ref{representationCPn} gave a description of the Campbell measures of stationary solutions of  \ref{canDLR}. In this section we further assume these solutions are ergodic and not number-rigid and improve on the previous description. Let us recall that the main rigidity result we have in mind, as stated in Theorem \ref{Theo-rigidity}, claims that solutions of the DLR equations are \textit{all} rigid, so the result of the present section should turn out to be empty; the next theorem is the main part of our proof by contradiction.
 
\begin{theorem}
\label{Theoabsurde}
Let $P$ be an ergodic point process on $\R$ satisfying \ref{canDLR} and assume that $P$ is not number-rigid. Then there exists $n \ge 1$ and a measurable function $\create_n:\Conf(\R)\to[0,\infty)$ such that the Campbell measure $\CP^{(n)}$ is absolutely continuous with respect to $\Leb^{\otimes n} \otimes P$ and with density
\begin{equation}\label{GrandCandescription}
 \frac{\d \CP^{(n)}}{\d\Leb^{\otimes n}\otimes P} (\xn,\g) = \create_n(\g) \,\e^{-\beta \h(\xn,\g)},
 \end{equation}
 where $\h$ is defined in \eqref{costxg}.
\end{theorem}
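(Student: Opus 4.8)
The plan is to upgrade the canonical description of the Campbell measures obtained in Theorem~\ref{representationCPn} into the announced grand canonical one, the improvement coming entirely from the failure of rigidity. Fix once and for all an integer $n\ge 1$ supplied by Corollary~\ref{nonrigide ergodic}, so that $P(\NoRigid^P_n)=1$ and $P(\NoRigid^P_{n,\La_m})\to 1$ as $m\to\infty$. By Theorem~\ref{representationCPn} we may write $\CP^{(n)}=\e^{-\beta\h}\cdot(\Leb^{\otimes n}\otimes\Q_n)$ for a ($\sigma$-finite) Borel measure $\Q_n$ on $\Conf(\R)$ which does not depend on the reference window. Since $\e^{-\beta\h}>0$ everywhere, \eqref{GrandCandescription} is equivalent to the absolute continuity $\Q_n\ll P$, in which case it suffices to take $\create_n:=\d\Q_n/\d P\ge 0$. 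So the whole point is to prove $\Q_n\ll P$.

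I would do this by comparing $\Q_n$ and $P$ through their disintegrations over the $\sigma$-algebra $\overline{\mathscr F_{\La_m^c}}=\sigma\big(\g\mapsto\g_{\La_m^c},\ \g\mapsto|\g_{\La_m}|\big)$, for a fixed but large $m$. On the one hand, the canonical DLR equations~\eqref{canDLR} satisfied by $P$ state precisely that, conditionally on $\overline{\mathscr F_{\La_m^c}}$, the law of $\g_{\La_m}$ under $P$ is the finite log-gas of $|\g_{\La_m}|$ points in $\La_m$ with external potential $\omega(\cdot|\g_{\La_m^c})$, i.e.\ has density $\rho_{\La_m^c}$ as in \eqref{def:rhoweak}. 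On the other hand, using the explicit formula for $\Q_n=\Q_n^{\La_m}$ from the proof of Theorem~\ref{representationCPn} and the fact that there the map $\xn\mapsto F_{\La_m}(\xn,\tilde\eta\cup\xn,\g)$ is constant — so that the energetic contribution of the $n$ removed points is an additive constant absorbed by the partition function — one checks that, conditionally on $\overline{\mathscr F_{\La_m^c}}$, the law of $\zeta_{\La_m}$ under $\Q_n$ is the \emph{same} finite log-gas, now with $|\zeta_{\La_m}|$ points. Thus $\Q_n$ and $P$ have identical conditional kernels given $\overline{\mathscr F_{\La_m^c}}$, and $\Q_n\ll P$ is equivalent to the absolute continuity of the law of $(\zeta_{\La_m^c},|\zeta_{\La_m}|)$ under $\Q_n$ with respect to the law of $(\g_{\La_m^c},|\g_{\La_m}|)$ under $P$. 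Reading the construction of $\Q_n^{\La_m}$ once more, the former is, up to a $P$-a.e.\ finite positive weight, the push-forward of the latter under the count shift $(\xi,l)\mapsto(\xi,l-n)$ restricted to $\{l\ge n\}$.

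Everything therefore reduces to the following: the push-forward of the law of $(\g_{\La_m^c},|\g_{\La_m}|)$ under $l\mapsto l-n$ is absolutely continuous with respect to that law; equivalently, writing $\mu_\xi$ for a regular version of the conditional law of $|\g_{\La_m}|$ given $\g_{\La_m^c}=\xi$, one must have $(\supp\mu_\xi-n)\cap\N\subseteq\supp\mu_\xi$ for $P$-a.e.\ $\xi$. This is exactly where non-rigidity enters, and I expect it to be the main obstacle. Corollary~\ref{nonrigide ergodic} only gives, for $P$-a.e.\ $\g$ and $m$ large, \emph{two} values $k,k+n$ both lying in $\supp\mu_{\g_{\La_m^c}}$ (this is the very definition of $\NoRigid^P_{n,\La_m}$), whereas what is needed is stability of the whole support under subtracting $n$. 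To bridge this gap I would argue as follows: applying the canonical DLR equations on the larger windows $\La_{m'}\supsetneq\La_m$, and using that the finite log-gas densities are strictly positive on products of subintervals, shows that once a given number of points in $\La_{m'}$ is accessible (conditionally on the exterior of $\La_{m'}$) then every smaller number of points in $\La_m$ is accessible as well; combining this with ergodicity — which forces $\NoRigid^P_{n,\La_m}$ to have probability close to $1$ and makes the various "accessibility" events essentially increasing in $m$ — and with Corollary~\ref{DLRcharge}, one transports these statements back to the exterior $\sigma$-algebra $\mathscr F_{\La_m^c}$ and obtains the required support property. The delicate point, and the crux of the whole argument, is that the conditional distribution of the number of points is genuinely not determined by the DLR equations, so this last step requires a careful interplay between DLR on overlapping windows, ergodicity and Corollary~\ref{DLRcharge}. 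Once $\Q_n\ll P$ is known, setting $\create_n:=\d\Q_n/\d P$ and invoking Theorem~\ref{representationCPn} gives~\eqref{GrandCandescription}.
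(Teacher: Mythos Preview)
Your reduction to proving $\Q_n\ll P$ is exactly the target of the paper's argument, and rewriting this via the conditional laws $\mu_\xi$ of $|\g_{\La_m}|$ given $\g_{\La_m^c}=\xi$ is a perfectly legitimate reformulation. The gap is precisely where you flag it: the inclusion $(\supp\mu_\xi-n)\cap\N\subseteq\supp\mu_\xi$. Non-rigidity, even in the strong form delivered by Corollary~\ref{nonrigide ergodic}, only guarantees for $P$-a.e.\ $\xi$ the existence of \emph{two} levels $k,k+n\in\supp\mu_\xi$; it says nothing about any other level, and in particular nothing about $|\g_{\La_m}|-n$ when $|\g_{\La_m}|\neq k+n$. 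Your proposed repair does not close this: applying canonical DLR on a larger window $\La_{m'}\supset\La_m$ gives information about the conditional law of $|\g_{\La_m}|$ given $\g_{\La_{m'}^c}$, not given $\g_{\La_m^c}$, and there is no mechanism for transporting support statements between these two distinct $\sigma$-algebras. Ergodicity and Corollary~\ref{DLRcharge} are statements about $P$ globally, not about the fine structure of $\mu_\xi$ for a fixed exterior $\xi$, so they do not help either.

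The paper does not attempt to establish any general support stability. Instead of disintegrating $\Q_n$ abstractly, it works directly with the identity $\CP^{(n)}([0,1]^n\times E)=\int\sum_{\xn\subset\g}\1_{[0,1]^n}(\xn)\1_E(\g\setminus\xn)\,P(\d\g)$, inserts the indicator of $\NoRigid_{n,\La_m}$ (which has full measure in the limit), and decomposes according to the minimal $k$ for which both $k$ and $k+n$ are accessible. On each piece it applies canonical DLR, splits off the $n$ removed points, and then compares term by term with the canonical DLR expansion of $P(E)=0$ at the shifted level. The whole computation stays at the two levels supplied by non-rigidity and never invokes the global stability of $\supp\mu_\xi$ that your approach requires; this is the essential difference between the two routes, and the piece your sketch is missing.
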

Compared to Theorem \ref{representationCPn}, the important change is that we gained information on the second marginal of $\CP^{(n)}$: Instead of being some abstract measure $\Q_n$ as in \eqref{CPnrep1}, here we see that it is absolutely continuous with respect to $P$ itself and obtain crucial information on its density.


\begin{proof}
Let $P$ be an ergodic point process on $\R$ satisfying  \ref{canDLR} which is not number-rigid. Corollary \ref{nonrigide ergodic} provides $n \geq 1$ such that  $P(\NoRigid_n^P)=1$. Recalling Theorem \ref{representationCPn} it is enough to prove that the measure $\Q_n$ of \eqref{CPnrep1} is absolutely continuous with respect to $P$. To do so, let $\Eve$ be an event such that $P(\Eve) = 0$ and prove that $\Q_n(\Eve) = 0$, which is enough to prove the existence of $\create_n$ according to the Radon-Nikodym theorem.

Since  $\h(\xn,\gamma)$ is finite for $P$-a.e. $\gamma$ and $\xn\in\R^n$,  the density \eqref{CPnrep1} is positive and it is enough to show that 
$
\CP^{(n)}([0,1]^n \times \Eve) = 0.
$
By definition of Campbell measures, we have
$$
\CP^{(n)}([0,1]^n \times \Eve) = \int \sum_{\xn\subset\gamma}
\1_{[0,1]^n}(\xn)\, \1_{\Eve}(\g\setminus\xn)P(\d\g).
$$
Since $P(\NoRigid_n)=1$, by monotonicity we also have $\lim_{m \to \infty} P(\NoRigid_{n,\La_m})=1$ and thus, by monotone convergence:
\begin{equation*}
 \CP^{(n)}([0,1]^n \times \Eve) = \lim_{m\to \infty} \int  \1_{\NoRigid_{n,\La_m}} (\g) \sum_{\xn\subset\gamma}\1_{[0,1]^n}(\xn)\, \1_{\Eve} (\g\setminus\xn)P(\d\g).
\end{equation*}
Recalling  \eqref{def:NoRigidrm}, for any integers $n,k,$ we define
\begin{multline}
\label{def:NoRigidrmk}
A^P_{n,k, \La} := \Big\lbrace \g \in \Conf(\R)  :\; \exists\, p \le k,\\
 P\left(\Aa_{p,\La} \Big| \Ext^{\g}_{\La} \right) > 0\; \text{ and }\; P\left(\Aa_{p+n,\La} \Big| \Ext^{\g}_{\La} \right) > 0 \Big\rbrace
\end{multline} 
and 
$$ \NoRigid_{n,k,\La_m} := A^P_{n,k, \La} \setminus A^P_{n,k-1, \La},$$
so that one can write the disjoint union:  $\NoRigid_{n,\La_m} = \cup_k  \NoRigid_{n,k,\La_m}.$
Now, if we set:
$$
\label{Termkm}
\Term_{k,m} :=    \int \1_{\Aa_{k, \La_m}}(\gamma) \1_{\NoRigid_{n,k,\La_m}} (\g)   \sum_{\xn\subset\gamma} \1_{[0,1]^n}(\xn)\, \1_{\Eve} (\g\setminus\xn)P(\d\g),
$$
we obtain
\eq
\label{term3}
\CP^{(n)}([0,1]^n \times \Eve) 
\le \lim_{m \to \infty} \sum_{k=0}^{\infty}\Term_{k,m} .
\qe
We introduce for convenience
$$
\Dens^{\La_m}_{\gamma_{\La_m^c}, k}(\eta) := \frac{1}{ Z_{\La_m, \R}( \g)}\, \e^{- \beta(\H_{\La_m}(\eta)+ \M_{\La_m, \R}(\eta, \g))} \,\bs1_{|\eta|=k}
$$
the density with respect to $\B_{k,\La_m}$ of $P$  conditionally to $\gamma_{\La_m^c}$ and having $k$ points in $\La_m$,  provided it makes sense. Recalling Remark~\ref{movedontmove},  the right hand side does only depend on $\gamma_{\La_m}$ through its cardinality, which explains the notation. 
 By using \ref{canDLR}  we obtain
\begin{multline*}
\Term_{k,m} = \int  \1_{\NoRigid_{n, k,\La_m}}(\eta\cup\g_{\La_m^c}) \\ \times   \sum_{\xn\subset\eta} \1_{ [0,1]^n}(\xn)  \1_{\Eve}(\eta \cup \g_{\La_m^c} \setminus\xn)\Dens^{\La_m}_{\gamma_{\La_m^c}, k}(\eta) \B_{k,\Lambda_m}(\d\eta) P(\d\g).
\end{multline*}
Similarly as in the proof of Theorem~\ref{representationCPn}, we set $\tilde\eta:=\eta\setminus\xn$ and obtain,
\begin{multline}
\label{term}
\Term_{k,m} = \frac{k!}{m^n(k-n)! }    \int  \1_{\NoRigid_{n, k,\La_m}}(\tilde\eta\cup\xn\cup\g_{\La_m^c}) 
 \1_{\Eve}(\tilde\eta \cup \g_{\La_m^c}) \\
 \qquad \times\Dens^{\La_m}_{\gamma_{\La_m^c}, k}(\tilde\eta\cup\xn) \B_{n,[0,1]}(\d\xn) \B_{k-n,\Lambda_m}(\d\tilde\eta)P(\d\g)\\
 = \frac{k!}{m^n(k-n)! }    \int  \1_{\NoRigid_{n, k,\La_m}}(\tilde\eta\cup\xn\cup\g_{\La_m^c}) 
 \1_{\Eve}(\tilde\eta \cup \g_{\La_m^c})\Dens^{\La_m}_{\gamma_{\La_m^c}, k-n}(\tilde\eta) \\
 \qquad \times \left(\int \frac{\Dens^{\La_m}_{\gamma_{\La_m^c}, k}(\tilde\eta\cup\xn)}{\Dens^{\La_m}_{\gamma_{\La_m^c}, k-n}(\tilde\eta)}\B_{n,[0,1]}(\d\xn) \right) \B_{k-n,\Lambda_m}(\d\tilde\eta)P(\d\g).
\end{multline}
The integrand inside the parentheses is finite since the denominator is  positive  on  the event where $\tilde\eta\cup\xn\cup\gamma_{\La_m^c}\in\NoRigid_{n,k,\La_m}$.
Since \ref{canDLR}  yields for any $k,n,m\geq 0$ and $\xn\in\R^n$,
\begin{align*}
P(E) & = \int \1_{E}(\tilde\eta\cup\gamma_{\La_m^c})\Dens^{\La_m}_{\gamma_{\La_m^c}, |\gamma_{\La_m}|}(\tilde\eta)\,\B_{|\g_{\La_m}|,\La_m}(\d\tilde\eta)P(\d\g)\\
&\geq \int  
 \1_{\NoRigid_{n, k,\La_m}}(\tilde\eta\cup\xn\cup\gamma_{\La_m^c})
 \1_{E}(\tilde\eta\cup\gamma_{\La_m^c}) \Dens^{\La_m}_{\tilde\gamma_{\La_m^c}, k-n}(\eta)\,\B_{k-n,\La_m}(\d\tilde\eta)P(\d\g)
\end{align*}
and $P(E)=0$ by assumption, we obtain with \eqref{term} that $\Term_{k,m} =0$ for every $k,m$. Thus  $\CP^{(n)}([0,1]^n \times \Eve)=0$ and this concludes the proof of Theorem \ref{Theoabsurde}.

\end{proof}

Now let us give some properties of the function $\create_n$. In Lemmas \ref{stationaritygn} and \ref{lem:createbougepoints}, we work under the same assumptions as in Theorem \ref{Theoabsurde}. 

First, we have the following simple result concerning the effect of translations on $\create_n$ appearing in Theorem~ \ref{Theoabsurde}, which is a direct consequence of the stationarity. 

\begin{lemma} Under the assumptions of Theorem~\ref{Theoabsurde},
\label{stationaritygn}
for any $x\in \R$,  $P$-a.e. $\g$ and $\Leb^{\otimes n}$-a.e. $\xn$, we have 
\begin{equation*}
\create_n(\g)\,\e^{-\beta \h(\xn,\g)}
 =  \create_n(\g - x)\,\e^{-\beta \h\left(\xn+x, \g-x \right)}.
\end{equation*}
\end{lemma}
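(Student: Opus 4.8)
The plan is to read the identity off the diagonal‑translation covariance of the Campbell measure. By Theorem~\ref{Theoabsurde}, $\CP^{(n)}$ is absolutely continuous with respect to the $\sigma$‑finite measure $\mu:=\Leb^{\otimes n}\otimes P$, with Radon--Nikodym density $\Phi(\xn,\g):=\create_n(\g)\,\e^{-\beta\h(\xn,\g)}$. Since $\Leb^{\otimes n}$ is translation invariant and $P$ is stationary, $\mu$ is invariant under the diagonal translation $T_x\colon(\xn,\g)\mapsto(\xn+x,\g+x)$, which is a Borel bimeasurable bijection, for every $x\in\R$. The key step is to show that $\CP^{(n)}$ is itself $T_x$‑invariant; uniqueness of the Radon--Nikodym density then forces $\Phi=\Phi\circ T_x^{-1}$ $\mu$‑almost everywhere, and writing this out is the claimed identity.

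For the covariance of $\CP^{(n)}$, fix a measurable $f\colon\R^n\times\Conf(\R)\to[0,\infty)$ and set $G(\g):=\sum_{\xn\subset\g}f(\xn,\g\setminus\xn)$, so that $\CP^{(n)}(f)=\int G(\g)\,P(\d\g)$ by \eqref{def:CPn}. The ordered $n$‑tuples of points of $\g+x$ are exactly the $\xn+x$ with $\xn\subset\g$, and $(\g+x)\setminus(\xn+x)=(\g\setminus\xn)+x$; hence $G(\g+x)=\sum_{\xn\subset\g}f(\xn+x,(\g\setminus\xn)+x)$, that is, $G(\g+x)$ is the analogue of $G$ with $f$ replaced by $f\circ T_x$. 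Stationarity of $P$ gives $\int G(\g)\,P(\d\g)=\int G(\g+x)\,P(\d\g)$, which reads $\CP^{(n)}(f)=\CP^{(n)}(f\circ T_x)$; since $f\ge 0$ is arbitrary this means $T_x\#\CP^{(n)}=\CP^{(n)}$.

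It then only remains to combine the two invariances. Pushing $\CP^{(n)}=\Phi\cdot\mu$ forward by $T_x$ and using $T_x\#\mu=\mu$ gives $\CP^{(n)}=T_x\#\CP^{(n)}=(\Phi\circ T_x^{-1})\cdot\mu$, so by uniqueness of the density $\Phi=\Phi\circ T_x^{-1}$ $\mu$‑a.e.; explicitly, $\create_n(\g)\,\e^{-\beta\h(\xn,\g)}=\create_n(\g-x)\,\e^{-\beta\h(\xn-x,\g-x)}$ for each fixed $x$ and $\mu$‑a.e.\ $(\xn,\g)$, which is the assertion of the lemma (up to replacing $x$ by $-x$). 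One uses here that $\h(\xn,\g)$ and its translate $\h(\xn-x,\g-x)$ are finite for $\mu$‑a.e.\ $(\xn,\g)$ --- by the Remark following Definition~\ref{definitionhn} together with stationarity of $P$ --- so that $\e^{-\beta\h}$ and its translate are positive and finite and the almost‑everywhere identity of densities genuinely transfers to the almost‑everywhere identity in the statement. The argument is entirely soft, relying only on the stationarity of $P$ and on Theorem~\ref{Theoabsurde}; I do not anticipate a real obstacle, the only care needed being the standard measure‑theoretic bookkeeping ($\sigma$‑finiteness of $\CP^{(n)}$, and the almost‑sure conclusion holding for each fixed $x$).
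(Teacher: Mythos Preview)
Your argument is correct and is exactly the stationarity computation the paper has in mind when it says the lemma ``is a direct consequence of the stationarity'': push the Campbell measure and the reference measure $\Leb^{\otimes n}\otimes P$ forward by the diagonal shift and compare Radon--Nikodym densities.

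One small point: what you actually obtain is $\create_n(\g)\,\e^{-\beta\h(\xn,\g)}=\create_n(\g-x)\,\e^{-\beta\h(\xn-x,\g-x)}$, and this is \emph{not} the displayed identity of the lemma ``up to replacing $x$ by $-x$'' as you claim (doing so would give $\create_n(\g+x)\,\e^{-\beta\h(\xn+x,\g+x)}$ on the right, still not the printed formula). In fact the lemma as printed seems to carry a sign typo in the first argument of $\h$ on the right-hand side: the version with $\xn-x$ is the natural one, and it is precisely the form used later in the paper (in the ``ergodic argument'' one applies it with $\xn=\bv y_n+m$, $x=m$ to get $\create_n(\gamma)\e^{-\beta\h(\bv y_n+m,\gamma)}=\create_n(\gamma-m)\e^{-\beta\h(\bv y_n,\gamma-m)}$). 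So your derivation is the right one; just drop the ``up to replacing $x$ by $-x$'' remark and note instead that the printed statement should read $\xn-x$.
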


The following property of $\create_n$ will be the crucial for the forthcoming proof by contradiction of number-rigidity.

\begin{lemma} 
 \label{lem:createbougepoints}
 Under the assumptions of Theorem~\ref{Theoabsurde},
for $P$-a.e. $\g$,  $\Leb^{\otimes n}$-a.e. $\bv y_n$, and any $\bv x_n \subset \g$, we have
\begin{equation}
\label{bougepointgn}
\create_n(\g)= \create_n\left(\g\setminus \bv x_n \cup \bv y_n\right) \e^{n\beta  \sum_{j=1}^n g (y_j)- g(x_j) }.
\end{equation}
\end{lemma}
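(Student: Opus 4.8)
The plan is to exploit an internal symmetry of the order-$n$ Campbell measure $\CP^{(n)}$: ``removing'' two disjoint $n$-tuples of points in the two possible orders must yield the same quantity. Write $D_n(\bv x_n,\g):=\create_n(\g)\,\e^{-\beta\h(\bv x_n,\g)}$ for the density furnished by \eqref{GrandCandescription}, fix a measurable version of it, and for a nonnegative measurable $F:\R^n\times\R^n\times\Conf(\R)\to[0,\infty)$ set
$$
J(F):=\int\ \sum_{\bv x_n\subset\g}\ \sum_{\bv y_n\subset\g\setminus\bv x_n}F\big(\bv x_n,\bv y_n,\g\setminus\bv x_n\setminus\bv y_n\big)\,P(\d\g).
$$

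First I would compute $J(F)$ by applying \eqref{GrandCandescription} twice. Absorbing the inner sum over $\bv y_n$ into a test function for $\CP^{(n)}$, the first application gives $J(F)=\int\!\int\big[\sum_{\bv y_n\subset\g}F(\bv x_n,\bv y_n,\g\setminus\bv y_n)\big]\,D_n(\bv x_n,\g)\,\d\bv x_n\,P(\d\g)$, where now $\g\sim P$ plays the role of the exterior configuration. For fixed $\bv x_n$ the integrand of the $P$-integral is again of Campbell type, once one observes that $D_n(\bv x_n,\g)=D_n\big(\bv x_n,(\g\setminus\bv y_n)\cup\bv y_n\big)$ whenever $\bv y_n\subset\g$; a second application of \eqref{GrandCandescription} then yields
$$
J(F)=\int_{\R^n}\int_{\R^n}\int F(\bv x_n,\bv y_n,\g)\,D_n(\bv x_n,\g\cup\bv y_n)\,D_n(\bv y_n,\g)\,P(\d\g)\,\d\bv y_n\,\d\bv x_n .
$$
Summing over $\bv y_n$ before $\bv x_n$ — which is literally the same sum over disjoint ordered $2n$-tuples of points of $\g$ — produces the same quantity with $\bv x_n$ and $\bv y_n$ exchanged inside the two factors $D_n$. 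Since $F$ is an arbitrary nonnegative measurable function and $\Leb^{\otimes n}\otimes\Leb^{\otimes n}\otimes P$ is $\sigma$-finite (identify the two densities by testing against indicators of finite-measure sets on which both are bounded), I conclude that, for $\Leb^{\otimes n}\otimes\Leb^{\otimes n}\otimes P$-a.e.\ $(\bv x_n,\bv y_n,\g)$,
$$
D_n(\bv x_n,\g\cup\bv y_n)\,D_n(\bv y_n,\g)=D_n(\bv y_n,\g\cup\bv x_n)\,D_n(\bv x_n,\g).
$$

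Next I would substitute $D_n=\create_n\,\e^{-\beta\h}$ and simplify using Definition~\ref{definitionhn}: enlarging the exterior configuration by the $n$ points $\bv y_n$ changes $\h(\bv x_n,\cdot)$ by $\sum_{i,j}\big(g(x_i-y_j)-g(y_j)\big)=\sum_{i,j}g(x_i-y_j)-n\sum_j g(y_j)$, and symmetrically for $\h(\bv y_n,\cdot)$. Since $g$ is even, the double sums $\sum_{i,j}g(x_i-y_j)$ coincide in the two expressions and cancel; cancelling also the common factors $\create_n(\g)$ and $\e^{-\beta(\h(\bv x_n,\g)+\h(\bv y_n,\g))}$ — legitimate where $\create_n(\g)>0$, and the remaining part of exterior-configuration space is $D_n$-null, hence harmless for the last step — the identity collapses to
$$
\create_n(\g\cup\bv x_n)=\create_n(\g\cup\bv y_n)\,\e^{\,n\beta\sum_{j=1}^n\big(g(y_j)-g(x_j)\big)}\qquad\text{for }\Leb^{\otimes n}\otimes\Leb^{\otimes n}\otimes P\text{-a.e.\ }(\bv x_n,\bv y_n,\g).
$$
Finally I would transfer this ``a.e.'' identity, in which $\g$ is an exterior configuration and $\bv x_n$ a generic point, to the statement of the lemma, in which $\g$ is a full configuration drawn from $P$ and $\bv x_n\subset\g$: the $P$-probability that some $\bv x_n\subset\g$ admits a $\Leb^{\otimes n}$-positive set of ``bad'' $\bv y_n$ is bounded, by one more use of \eqref{GrandCandescription} (applied to $\CP^{(n)}$ tested against $\1_B$, where $B$ is the pair-set on which the previous display fails on a positive measure of $\bv y_n$), by $\int\!\int\1_B(\bv x_n,\g)\,D_n(\bv x_n,\g)\,\d\bv x_n\,P(\d\g)$, which vanishes by the previous step and Fubini; rewriting $\g\cup\bv x_n$ as the full configuration and $\g\cup\bv y_n=(\g\cup\bv x_n)\setminus\bv x_n\cup\bv y_n$ gives \eqref{bougepointgn}. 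I expect the main obstacle to be the double application of the Campbell formula in the first step: one must set up the auxiliary test functions so that \eqref{GrandCandescription} applies verbatim at each stage, keep straight which configuration is the ``exterior'' at each stage, and justify the interchanges of sums and integrals (harmless here, all integrands being nonnegative); the null-set bookkeeping around $\{\create_n=0\}$ and the convention for evaluating $\create_n$ and $\h$ on perturbed configurations is purely technical since every relevant quantity is weighted by $D_n$.
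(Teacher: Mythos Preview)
Your proposal is correct and follows essentially the same route as the paper: both proofs exploit the symmetry of the order-$2n$ Campbell measure by applying the Campbell representation twice in the two possible orders, equate the resulting densities, and then simplify using the explicit form of $\h$ to obtain the key identity $\create_n(\g\cup\bv x_n)\,\e^{n\beta\sum g(x_j)}=\create_n(\g\cup\bv y_n)\,\e^{n\beta\sum g(y_j)}$, which is then transferred to the lemma's statement via one more use of the Campbell formula. The only noteworthy difference is that the paper applies Theorem~\ref{Theoabsurde} (density against $\Leb^{\otimes n}\otimes P$) at the first step and Theorem~\ref{representationCPn} (density against $\Leb^{\otimes n}\otimes\Q_n$) at the second, whereas you use Theorem~\ref{Theoabsurde} twice; the paper's choice avoids the need to argue separately on $\{\create_n=0\}$ because the $\Q_n$-density $\e^{-\beta\h}$ is everywhere positive, but your handling of that null set is correct and the two arguments are otherwise interchangeable.
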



\begin{proof}
It is enough to show that, for $\Q_n$-a.e. $\g$ and $\Leb^{\otimes n}$-a.e. $\bv x_n ,\bv y_n$, we have 
\eq
\label{createbis}
\create_n(\g\cup \bv x_n)\, \e^{n\beta \sum_{j=1}^ng(x_j)}= \create_n\left(\g\cup \bv y_n\right) \,\e^{n\beta \sum_{j=1}^ng(y_j)}.
\qe
Indeed, let $h:\R^n\times \R^n\times \Conf(\R)\to[0,\infty)$ be a measurable test function. We have by  definition of $\CP^{(n)}$, Theorem~\ref{representationCPn} and \eqref{createbis},
\begin{align*}
& \int \sum_{\bv x_n\subset\g} h(\bv x_n,\bv y_n,\g)\,\create_n(\g)\, P(\d\g)\Leb^{\otimes n}(\d \bv y_n) \\
= & \int\CP^{(n)}\Big[(\bv x_n,\gamma)\mapsto h(\bv x_n,\bv y_n,\g\cup \bv x_n)\create_n(\g\cup \bv x_n) \Big]\Leb^{\otimes n}(\d \bv y_n)\\
= & \int  h(\bv x_n,\bv y_n,\g\cup \bv x_n)\create_n(\g\cup \bv x_n)\e^{-\beta\h(\bv x_n,\gamma)}  \Leb^{\otimes n}(\d \bv x_n)\Q_n(\d\gamma)\Leb^{\otimes n}(\d \bv y_n) \\
= & \int h(\bv x_n,\bv y_n,\g\cup \bv x_n)\create_n(\g\cup \bv y_n) \e^{n\beta  \sum_{j=1}^ng(y_j)-g(x_j) }\\
& \hspace*{5.5cm} \e^{-\beta\h(\bv x_n,\gamma)} \Leb^{\otimes n}(\d \bv x_n)\Q_n(\d\gamma)\Leb^{\otimes n}(\d \bv y_n) \\
= & \int\CP^{(n)}\Big[(\bv x_n,\gamma)\mapsto h(\bv x_n,\bv y_n,\g\cup \bv x_n)\create_n(\g\cup \bv y_n)  \e^{n\beta  \sum_{j=1}^ng(y_j)-g(x_j)  }\Big]\Leb^{\otimes n}(\d \bv y_n)\\
= &  \int \sum_{\bv x_n\subset\g} h(\bv x_n,\bv y_n,\g)\,\create_n(\g \cup\bv y_n\setminus\bv x_n)  \e^{n\beta  \sum_{j=1}^ng(y_j)-g(x_j) } \,P(\d\g)\Leb^{\otimes n}(\d \bv y_n),
\end{align*}
from which our claim follows.

Let  $f:\R^{n}\times \R^n\times \Conf(\R)\to[0,\infty)$ be any measurable function, and set 
$$
f_1(\xn,\g):=\sum_{\bv y_n\subset\g } f(\bv x_{n}, \bv y_n,\g\setminus   \bv y_n),\qquad f_2(\bv y_n,\g):=\sum_{\bv x_n\subset\g } f(\bv x_{n},\bv y_n,\g\setminus \bv x_n).
$$
We have (writing $(\bv x_n,\bv y_n)=\bv z_{2n}\in\R^{2n}$),  
\begin{align*}
\CP^{(2n)}(f) & = 
\int \sum_{\bv z_{2n}\subset\g}  f(\bv z_{2n},\g\setminus \bv z_{2n}) P(\d\g) \\
&  =  \int\sum_{
\bv x_n\subset\g
 }  \Big(\sum_{\bv y_n\in\g\setminus \bv x_n } f(\bv x_{n}, \bv y_n,\g\setminus (\bv x_{n}\cup \bv y_n))\Big) P(\d\g)\\
 & = \CP^{(n)}(f_1)
\end{align*}
and similarly $\CP^{(2n)}(f)=\CP^{(n)}(f_2)$. Next, using Theorem~\ref{Theoabsurde} and then Theorem~\ref{representationCPn}, we obtain

\begin{align*}
 \CP^{(n)}(f_1) 
 & =  \int\create_n(\g)\e^{-\beta \h(\bv x_n,\g)}\left[\sum_{\bv y_n\in\g } f(\bv x_{n}, \bv y_n,\g\setminus  \bv y_n) \right] 
 \Leb^{\otimes n}(\d \bv x_{n}) P(\d\g)\\
 &=\int \CP^{(n)}\left[ (\bv y_n,\gamma)\mapsto\create_n(\g\cup \bv y_n)\e^{-\beta \h(\bv x_n,\g\cup \bv y_n)}f(\bv x_n, \bv y_n,\g)\right]\Leb^{\otimes n}(\d \bv x_{n})\\
 &= \int  \create_n(\g\cup \bv y_n)\e^{-\beta \h(\bv x_n,\g\cup \bv y_n)-\beta  \h(\bv y_n,\g)}\\
 & \qquad\qquad\qquad\qquad\qquad\quad \times f(\bv x_n, \bv y_n,\g)\Leb^{\otimes n}(\d \bv x_{n})\Leb^{\otimes n} (\d \bv y_n) \Q_n(\d \g).
 \end{align*}
 The same computation with $f_2$ yields
 \begin{multline*}
  \CP^{(n)}(f_2)= \int  \create_n(\g\cup \bv x_n)\e^{-\beta \h(\bv y_n,\g\cup \bv x_n)-\beta  \h(\bv x_n,\g)}\\
  \times f(\bv x_n,\bv y_n,\g)\Leb^{\otimes n}(\d \bv x_{n})\Leb^{\otimes n} (\d \bv y_n)\Q_n(\d \g).
 \end{multline*}
 Thus, since the test function $f$ is arbitrary, we have for $\Q_n$-a.e. $\gamma$ and $\Leb^{\otimes n}$-a.e. $\bv x_n,\bv y_n$,
 $$
 \create_n(\g\cup \bv x_n)\e^{-\beta \h(\bv y_n,\g\cup \bv x_n)-\beta  \h(\bv x_n,\g)}=\create_n(\g\cup \bv y_n)\e^{-\beta \h(\bv x_n,\g\cup \bv y_n)-\beta  \h(\bv y_n,\g)},
 $$
 from which, after using the definition of $\h$ and several simplifications, we obtain \eqref{createbis}.
\end{proof}

\subsection{Proof of Theorem \ref{Theo-rigidity}} \label{sectionconclusionproof}

We are finally in position to provide a proof for Theorem~\ref{Theo-rigidity}. Let $P$ be a stationary point process satisfying \ref{canDLR}.

\subsubsection{Restriction to ergodic processes.}
First,  as a consequence of Proposition~\ref{splitErgo} and Proposition~\ref{rigiditemixing}, we assume without loss of generality that $P$ is further ergodic in the assumptions of  Theorem~\ref{Theo-rigidity} to prove this theorem.

\subsubsection{Strategy of the proof.} The proof goes by contradiction.

Assume that the ergodic point process $P$ is \textbf{not} number-rigid. By Theorem \ref{Theoabsurde}, there exists $n\ge1$ and a function $\create_n$ such that \eqref{GrandCandescription} holds true; from now this $n$ is fixed.  For any $m\geq 1$ and $K\geq 0$, we denote by $\Nn_m^K$ the event
\begin{equation}
\label{def:Nn}
\Nn_m^K := \left\lbrace \gamma \in \Conf(\R): \; |\gamma_{[m, m+1]}| \leq K \right\rbrace
\end{equation}
and consider the test function  $\Test_{m,K}$ defined by
\begin{equation}
\label{TestFunctionmK}
\Test_{m,K}(\xn, \gamma) = \1_{[0,1]^n}(\xn) \1_{\Nn_0^K}(\gamma) \1_{\Nn_m^K}(\gamma)  \sum_{\bv y_n\subset \gamma} \1_{[m,m+1]^n}(\bv y_n).
\end{equation}
In the following $K$ is fixed and always assumed to be larger than $2n$.

We compute, using that $\xn\in[0,1]^n$ and $m\geq 1$,
\begin{align*}
\CP^{(n)}(\Test_{m,K}) & = \int \sum_{\xn\subset \gamma} \1_{[0,1]^n}(\xn) \1_{\Nn_0^K}(\gamma\setminus\xn) \1_{\Nn_m^K}(\gamma\setminus\xn)  \sum_{\bv y_n\subset \gamma\setminus\xn} \1_{[m,m+1]^n}(\bv y_n) P(\d\gamma)\\
& =  \int  \1_{\Nn_0^{K+n}}(\gamma) \1_{\Nn_m^K}(\gamma)  \sum_{\xn\subset \gamma} \1_{[0,1]^n}(\xn) \sum_{\bv y_n\subset \gamma} \1_{[m,m+1]^n}(\bv y_n) P(\d\gamma).
\end{align*}
Thus, forgetting about the truncation $\1_{\Nn_0^{K+n}}(\gamma) \1_{\Nn_m^K}(\gamma )$, we should think of \linebreak $\CP^{(n)}(\Test_{m,K})$ as encoding the correlation between the number of $n$-tuples from $\g$ with law $P$ falling into $[0,1]$ and $[m, m+1]$ respectively.  The main idea to reach the contradiction is that $\CP^{(n)}(\Test_{m,K})$ is bounded from above independently on $m$, since  we have the rough upper bound,
\eq
\label{CPisbounded}
\CP^{(n)}(\Test_{m,K}) \leq  (K+n)^n\times K^n,
\qe
but we will prove that its Ces\'aro series diverges by using the long range of the logarithmic interaction. 

\subsubsection{Manipulations on the test function - Part I}
By using Theorem \ref{Theoabsurde}, introducing phantom variables $\bv z_n$ and then using Lemma \ref{lem:createbougepoints}, we can write
\begin{align*}
& \CP^{(n)}(\Test_{m,K}) \\
= &  \int  \create_n(\gamma) \e^{-\beta \h(\xn, \gamma)}\1_{\Nn_0^K}(\gamma) \1_{\Nn_m^K}(\gamma) \sum_{\bv y_n\subset \gamma} \1_{[m,m+1]^n}(\bv y_n) P(\d\gamma)\Leb_{[0,1]}^{\otimes n}(\d \xn)\\
 =& \int  \create_n(\gamma) \e^{-\beta \h(\xn, \gamma)}\1_{\Nn_0^K}(\gamma) \1_{\Nn_m^K}(\gamma) \\
 & \hspace*{5cm}\sum_{\bv y_n\subset \gamma} \1_{[m,m+1]^n}(\bv y_n) P(\d\gamma)
\Leb_{[0,1]}^{\otimes n}(\d \xn)\Leb_{[1,2]}^{\otimes n}(\d \bv z_n)\\
=&  \int   \e^{-\beta \h(\xn, \gamma)}  \1_{\Nn_0^K}(\gamma) \1_{\Nn_m^K}(\gamma) \sum_{\bv y_n\subset \g} \create_n(\gamma \setminus \bv y_n\cup\bv z_n)  \1_{[m,m+1]^n}(\bv y_n) \e^{n\beta \sum_{j=1}^n g(z_j)-g(y_j) } \\
& \qquad\qquad\qquad\qquad\qquad\qquad\qquad\qquad\times P(\d\gamma) \Leb_{[0,1]}^{\otimes n}(\d \xn)\Leb_{[1,2]}^{\otimes n}(\d \bv z_n).
\end{align*}

Next, we use that for any $\bv z_n\in[1,2]^n$ and $\bv y_n\in[m, m+1]^n$ we have
\eq
\label{echangepointslogm}
\e^{n\beta \sum_{j=1}^n g(z_j)-g(y_j)} \geq \e^{-C g(m)},
\qe
for any $m$ large enough and  $C > 0$ only depending on $n, \beta$. This yields
\begin{multline}
\label{CPnTest1}
\CP^{(n)}(\Test_{m,K})
\geq \e^{-Cg(m)}  \int   \e^{-\beta \h(\xn, \gamma)}  \1_{\Nn_0^K}(\gamma) \1_{\Nn_m^K}(\gamma) \\ \times\sum_{\bv y_n\subset \g} \create_n(\gamma \setminus \bv y_n\cup\bv z_n)  \1_{[m,m+1]^n}(\bv y_n) 
P(\d\gamma) \Leb_{[0,1]}^{\otimes n}(\d \xn)\Leb_{[1,2]}^{\otimes n}(\d \bv z_n)
\end{multline}
for any $m$ sufficiently large.

\subsubsection{Manipulations on the test function - Part II}
The integrand in \eqref{CPnTest1} can also be interpreted in terms of a Campbell measure, and using again Theorem \ref{Theoabsurde}, we have for any fixed $\bv x_n,\bv z_n$,
\begin{align}
\label{CPnTest2}
& \int   \e^{-\beta \h(\xn, \gamma)}  \1_{\Nn_0^K}(\gamma) \1_{\Nn_m^K}(\gamma) \sum_{\bv y_n\subset \g} \create_n(\gamma \setminus \bv y_n\cup\bv z_n)  \1_{[m,m+1]^n}(\bv y_n) P(\d\gamma) \nonumber\\
& = \int \e^{-\beta \h(\bv x_n, \gamma \cup \bv y_n)-\beta \h(\bv y_n, \gamma)}  \1_{\Nn_0^K}(\gamma \cup \bv y_n) \1_{\Nn_m^K}(\gamma \cup\bv y_n) \create_n(\gamma \cup\bv z_n)
\create_n(\gamma) \nonumber\\
& \qquad \times P(\d\gamma) \Leb_{[m,m+1]}^{\otimes n}(\d \bv y_n)\nonumber\\
& = \int \e^{-\beta \h(\bv x_n, \gamma \cup \bv y_n)-\beta \h(\bv y_n, \gamma)}  \1_{\Nn_0^K}(\gamma) \1_{\Nn_m^{K-n}}(\gamma) \create_n(\gamma \cup\bv z_n)
\create_n(\gamma) \\
& \qquad \times P(\d\gamma) \Leb_{[m,m+1]}^{\otimes n}(\d \bv y_n).\nonumber
\end{align}
Moreover, recalling Definition \ref{definitionhn}, we have for any $\bv x_n\in[0,1]^n$ and $\bv y_n\in[m, m+1]^n$,
\eq
\label{secondestimatecost}
\big|\h(\bv x_n, \gamma \cup \bv y_n)  - \h(\bv x_n, \gamma)\big|=  \left| \sum_{i,j= 1}^n    g (y_j - x_i)-g(y_j)\right| \leq c
\qe
for some $c>0$ depending only on $n, \beta$. Thus, together with \eqref{CPnTest1}--\eqref{CPnTest2}, we obtain
\begin{multline}
\label{CPnTest4}
\CP^{(n)}(\Test_{m,K})
\geq \e^{-(Cg(m) + c)}  \int  \e^{-\beta \h(\bv x_n, \gamma)-\beta \h(\bv y_n, \gamma)}  \1_{\Nn_0^K}(\gamma) \1_{\Nn_m^{K-n}}(\gamma) \\
 \times \create_n(\gamma \cup\bv z_n)\create_n(\gamma) P(\d\gamma) \Leb_{[0,1]}^{\otimes n}(\d \xn)\Leb_{[m,m+1]}^{\otimes n}(\d \bv y_n)\Leb_{[1,2]}^{\otimes n}(\d \bv z_n).
\end{multline}

Finally, in order to isolate the dependence in $m$ in the remaining integral, let us set 
\eq
\label{IIdef}
\II_{m, K}(\g):=\int  \create_n(\gamma) \e^{-\beta \h(\bv y_n, \gamma)}  \1_{\Nn_m^{K-n}}(\gamma) \Leb_{[m,m+1]}^{\otimes n}(\d \bv y_n)
\qe
so that the previous inequality reads
\begin{multline}
\label{CPnTest5}
\CP^{(n)}(\Test_{m,K})\geq \e^{-(C g(m) + c)}  \int  \II_{m, K}(\g)\,\e^{-\beta \h(\bv x_n, \gamma)}  \1_{\Nn_0^K}(\gamma) 
  \create_n(\gamma \cup\bv z_n) \\ \times P(\d\gamma) \Leb_{[0,1]}^{\otimes n}(\d \xn)\Leb_{[1,2]}^{\otimes n}(\d \bv z_n).
\end{multline}

\subsubsection{An ergodic argument}
We now make change the change of variables  $\bv y_n\mapsto \bv y_n+m$ in \eqref{IIdef} to obtain from Lemma~\ref{stationaritygn},
\begin{align*}
\II_{m, K}(\g)& =\int  \create_n(\gamma)\e^{-\beta \h(\bv y_n+m, \gamma)}  \1_{\Nn_m^{K-n}}(\gamma) \Leb_{[0,1]}^{\otimes n}(\d \bv y_n)\\
& =\int  \create_n(\gamma-m)\e^{-\beta \h(\bv y_n, \gamma-m)}  \1_{\Nn_0^{K-n}}(\gamma-m)\Leb_{[0,1]}^{\otimes n}(\d \bv y_n) \\
& = \II_{0, K}(\g-m).
\end{align*}
Since $P$ is ergodic by assumption, the ergodic theorem implies that  $P$-a.s.
$$
\lim_{M \to \infty} \frac{1}{M} \sum_{m=1}^M\II_{m, K}(\g)= \E_P\big[\II_{0, K}\big].
$$
Note that $\E_P\big[\II_{0, K}\big]>0$, since
\begin{align}
\label{II=camp}
\E_P\big[\II_{0, K}\big]& = \int\create_n(\eta) \e^{-\beta \h(\bv y_n, \eta)} \1_{\Nn_0^{K-n}}(\eta) \Leb_{[0,1]}^{\otimes n}(\d \bv y_n) P(\d \eta)\nonumber\\
& = \CP^{(n)} \left( \1_{[0,1]^n}(\bv y_n) \1_{\Nn_0^{K-n}}(\gamma) \right)\nonumber\\
&= \int \sum_{\bv y_n\subset\g}  \1_{[0,1]^n}(\bv y_n)\1_{\Nn_0^{K}}(\gamma\setminus\bv y_n) P(\d\gamma)\\
& \geq P(|\g_{[0,1]}|=n) >0\nonumber
\end{align}
by Corollary~\ref{DLRcharge}; we used that $K\geq n$ by assumption. Fatou's lemma then yields
\begin{multline}
\label{apresFatou}
\liminf_{M \to \infty} \frac{1}{M} \sum_{m=1}^M \CP^{(n)}(\Test_{m,K}) 
\geq \left(\liminf_{M \to \infty} \frac{1}{M} \sum_{m=1}^M \e^{-(C g(m) + c)} \right) \E_P\big[\II_{0, K}\big]  \\
\times  \int  
\e^{-\beta \h(\bv x_n, \gamma)} \create_n(\gamma \cup\bv z_n) \1_{\Nn_0^K}(\gamma) P(\d\gamma) 
 \Leb_{[0,1]}^{\otimes n}(\d\bv x_n) \Leb_{[1,2]}^{\otimes n}(\bv z_n).
\end{multline}

\subsubsection{The remaining integral and conclusion}
If the integral  in \eqref{apresFatou} vanishes, then 
\eq
\label{termerestant}  
\e^{-\beta \h(\bv x_n, \gamma)} \create_n(\gamma \cup\bv z_n) \1_{\Nn_0^K}(\gamma) =0,
\qe
$P \otimes  \Leb_{[0,1]}^{\otimes n} \otimes \Leb_{[1,2]}^{\otimes n}$-almost everywhere. 
Let us fix $m_0\geq 1$. Note that when we proved the lower bound \eqref{CPnTest4}, the only inequalities we have used were \eqref{echangepointslogm} and \eqref{secondestimatecost}. Since the converse inequality also holds in \eqref{echangepointslogm} after changing the constant $C$ to another positive constant, the exact same line of arguments yields $\kappa>0$ only depending on $m_0,n,\beta$ such that
\begin{multline}
\label{CPupperbound}
\CP^{(n)}(\Test_{m_0,K})\leq \kappa  \int  \II_{m_0, K}(\g)\;\e^{-\beta \h(\bv x_n, \gamma)}  
  \create_n(\gamma \cup\bv z_n)  \1_{\Nn_0^K}(\gamma)\\ \times P(\d\gamma) \Leb_{[0,1]}^{\otimes n}(\d \xn)\Leb_{[1,2]}^{\otimes n}(\d \bv z_n).
  \end{multline}
 Recalling \eqref{II=camp}, we have $\E_P[\II_{m_0, K}]=\E_P[\II_{0, K}]\leq (K+n)^n<\infty$ and thus \eqref{termerestant}--\eqref{CPupperbound}   imply together that $\CP^{(n)}(\Test_{m_0,K})=0$. However, we have the rough lower bound
$$
\CP^{(n)}(\Test_{m,K}) \geq P\left( |\gamma_{[0,1]}| = n \cap |\gamma_{[m, m+1]}| = n \right),
$$
 as soon as $K\geq 2n$, and Corollary~\ref{DLRcharge} then yields that $\CP^{(n)}(\Test_{m_0,K})>0$.
  
  As a consequence, the remaining integral in \eqref{apresFatou} is positive and, because $g(m)=-\log(m)\to-\infty$ as $m\to\infty$, it follows from \eqref{apresFatou} that
\begin{equation}
\label{tendtoinfinity}
\liminf_{M \to \infty} \frac{1}{M} \sum_{m=1}^M \CP^{(n)}(\Test_{m,K}) = \infty.
\end{equation}
Recalling the upper bound \eqref{CPisbounded} on $\CP^{(n)}(\Test_{m,K})$, we finally reached a contradiction. The proof of Theorem~\ref{Theo-rigidity} is therefore complete. 

\subsubsection{A more general statement for number-rigidity}
\label{Sec:general}

A careful examination of the proof of Theorem~\ref{Theo-rigidity} reveals that we did not use much of the properties of the vector space $\R$ nor of the logarithmic interacting potential $g$. More precisely, the properties of $g$ are only used in equalities \eqref{echangepointslogm}, \eqref{secondestimatecost} and when stating that
$$
\liminf_{M \to \infty} \frac{1}{M} \sum_{m=1}^M \e^{-(C g(m) + c)}=\infty. 
$$
As for the compact sets $[0,1]$, $[1,2]$ and $[m,m+1]$ appearing in the proof,  they can be replaced by arbitrary disjoint compact sets $K_0,K_1,K_m$ with unit Lebesgue volume and such that the distance from $K_m$ to $K_0$ and $K_1$ goes to infinity with $m$. 

Thus, let us consider more general interaction potentials $g:\R^d\to\R\cup\{+\infty\}$ for $d\geq 1$ and redefine   \eqref{def:Hlambda}--\eqref{fKernel} accordingly by using this new $g$ in their definition (where we now set $\La_m:=[-\tfrac m2,\tfrac m2]^d)$.  Let us also assume there exists a non-trivial class $\mathscr C$ of stationary point processes  for which the move functions exists, namely so that the results of Lemma~\ref{lem:definimoveinf} holds for any $P\in\mathscr C$ and, having in mind the proof of Proposition~\ref{splitErgo}, that $\mathscr C$ is stable by desintegration: if $P\in\mathscr C$ can be written as \eqref{mixingP}, then $P_\eta\in\mathscr C$ for $P$-a.e. $\eta$ (recall that for the one dimensional logarithmic interaction  $\mathscr C$ were  the class of stationary point processes that have finite renormalized energy). We then say that a stationary point process $P$ on $\R^d$ satisfies the \emph{canonical DLR equations with respect to $g$} if $P\in\mathscr C$ and \ref{canDLR} holds with the new definition for $f_{\La,\R^d}$. In this more general setting, cosmetic modifications of the proof of Theorem~\ref{Theo-rigidity} leads to the following result.

\begin{theorem} 
\label{DLRrigidityGeneral}
Let $d\geq 1$ and $g:\R^d\to\R\cup\{+\infty\}$ be a measurable function which satisfies $g(x)\to-\infty$ when $\|x\|\to\infty$ and assume that there exists a compact set $K\subset\R^d$ such that $g$ is continuous on $\R^d\setminus K$.
If $P$ is a stationary point process on $\R^d$ which satisfies the canonical DLR equations with respect to $g$, then $P$ is number-rigid. 
\end{theorem}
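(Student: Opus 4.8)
The plan is to rerun the proof of Theorem~\ref{Theo-rigidity} (Sections~\ref{sec:rigidity}--\ref{sectionconclusionproof}) almost verbatim, tracking the handful of places where the one-dimensionality of $\R$ and the special form $g=-\log|\cdot|$ are actually used. Every object of those sections---the interaction energy \eqref{def:Hlambda}, the move functions and the kernels $\L_{\La,\R^d}$, $f_{\La,\R^d}$ of \eqref{fKernel}, and the cost function $\h$ of \eqref{costxg} (with $\int_{-p}^{p}$ now an integral over $\La_p=[-\tfrac p2,\tfrac p2]^d$)---is re-read with the new $g$ and ambient space $\R^d$; by hypothesis $P\in\mathscr C$, a class of stationary processes stable under the disintegration \eqref{mixingP} on which the move functions are defined, and $P$ satisfies \ref{canDLR}. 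First I would reduce to the ergodic case: Propositions~\ref{splitErgo} and \ref{rigiditemixing} use only that $\Conf(\R^d)$ is Polish, stationarity, the stability of $\mathscr C$ under disintegration into ergodic components (so each $P_\eta$ again satisfies \ref{canDLR}), Preston's theorem on conditional specifications, and formal conditional-expectation manipulations---none of which sees the form of $g$---so one may assume $P$ ergodic.

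Next, the Campbell-measure machinery carries over. Theorem~\ref{representationCPn} rests only on the algebraic identity \eqref{algebraicidentity} (valid for any pair potential), the equations \ref{canDLR}, and the change of variables removing $\bv x_n$, so it gives $\d\CP^{(n)}=\e^{-\beta\h}\,\d(\Leb^{\otimes n}\otimes\Q_n)$. Assuming for contradiction that $P$ is \emph{not} number-rigid, Corollary~\ref{nonrigide ergodic} furnishes $n\ge1$ with $P(\NoRigid^P_n)=1$, and the proof of Theorem~\ref{Theoabsurde}---again a pure Campbell computation built on \ref{canDLR}---upgrades this to $\d\CP^{(n)}/\d(\Leb^{\otimes n}\otimes P)=\create_n(\gamma)\,\e^{-\beta\h(\bv x_n,\gamma)}$. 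Lemma~\ref{stationaritygn} and the crucial Lemma~\ref{lem:createbougepoints}, i.e.\ \eqref{bougepointgn} with its $g$-dependent factor $\e^{\,n\beta\sum_j(g(y_j)-g(x_j))}$, are proven from the Campbell calculus alone and hold unchanged.

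Then I would run the test-function argument. Fix disjoint compact sets $K_0,K_1\subset\R^d\setminus K$ of unit Lebesgue measure and translates $K_m:=K_0+v_m$ with $\|v_m\|\to\infty$, so $K_m\cap K=\varnothing$ for $m$ large and $\dist(K_m,K_0\cup K_1)\to\infty$; define $\Nn_0^K$, $\Nn_m^K$ and $\Test_{m,K}$ as in \eqref{def:Nn}--\eqref{TestFunctionmK} with $[0,1],[1,2],[m,m+1]$ replaced by $K_0,K_1,K_m$. The crude bound \eqref{CPisbounded}, $\CP^{(n)}(\Test_{m,K})\le(K+n)^nK^n$, is unchanged. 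Repeating the two-part manipulation of $\Test_{m,K}$ via Theorem~\ref{Theoabsurde} and \eqref{bougepointgn} needs exactly two estimates: (i) for $\bv z_n\in K_1^n$, $\bv y_n\in K_m^n$, $\e^{\,n\beta\sum_j(g(z_j)-g(y_j))}\ge c_1\,\e^{-Cg_m}$ with $g_m:=\sup_{K_m}g$ and $c_1,C>0$ depending only on $n,\beta,K_1$ (since $g$ is bounded on the compact $K_1\subset\R^d\setminus K$ and $g\le g_m$ on $K_m$); and (ii) $|\h(\bv x_n,\gamma\cup\bv y_n)-\h(\bv x_n,\gamma)|=|\sum_{i,j}(g(y_j-x_i)-g(y_j))|\le c$ uniformly for $\bv x_n\in K_0^n$, $\bv y_n\in K_m^n$. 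Granting these, set $\II_{m,K}$ as in \eqref{IIdef} with $[m,m+1]$ replaced by $K_m$; Lemma~\ref{stationaritygn} gives $\II_{m,K}(\gamma)=\II_{0,K}(\gamma-v_m)$, ergodicity gives $\frac1M\sum_{m\le M}\II_{m,K}\to\E_P[\II_{0,K}]>0$ $P$-a.s.\ (positivity from Corollary~\ref{DLRcharge}), and Fatou's lemma yields, exactly as in the original argument,
\[
\liminf_{M\to\infty}\frac1M\sum_{m=1}^M\CP^{(n)}(\Test_{m,K})\ \ge\ \Bigl(\liminf_{M\to\infty}\frac1M\sum_{m=1}^M\e^{-(Cg_m+c')}\Bigr)\,\E_P[\II_{0,K}]\cdot I,
\]
with $I>0$ the remaining integral, positive by the same $m_0$-trick (the reverse inequality in (i) plus Corollary~\ref{DLRcharge}). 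Since $g(x)\to-\infty$ forces $g_m\to-\infty$, hence $\e^{-Cg_m}\to\infty$, the Ces\`aro average on the left diverges, contradicting $(K+n)^nK^n$; this proves Theorem~\ref{DLRrigidityGeneral}.

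The structural part is essentially free---the Gibbs/Campbell machinery is blind to $d$ and to the precise $g$, and estimate (i) together with the divergence $\liminf_M\frac1M\sum_{m\le M}\e^{-(Cg_m+c')}=\infty$ are immediate from $g_m\to-\infty$. The hard part will be estimate (ii): keeping $\sum_{i,j}|g(y_j-x_i)-g(y_j)|$ bounded as $K_m$ escapes to infinity. With $K_0$ small and near the origin and $K_m=K_0+v_m$, both $y_j-x_i$ and $y_j$ lie in a fixed bounded neighbourhood of $v_m$, on which $g$ is continuous once $m$ is large; the needed \emph{uniformity in $m$} is exactly where the decay of $g$---and, should bare continuity off $K$ not suffice, a mild regularity of $g$ at infinity---must be invoked, mirroring the $O(1)$ cancellation $\log\tfrac{|y_j|}{|y_j-x_i|}$ that makes \eqref{secondestimatecost} work in the logarithmic case. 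Everything else is bookkeeping.
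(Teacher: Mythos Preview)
Your approach is exactly the paper's own: the authors merely remark that the only places in the proof of Theorem~\ref{Theo-rigidity} where the specific form of $g$ enters are \eqref{echangepointslogm}, \eqref{secondestimatecost}, and the divergence of the Ces\`aro sum, then declare that ``cosmetic modifications'' yield the result---without spelling out any verification. You have tracked the argument more carefully than they do.

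Your worry about estimate~(ii) is justified: a uniform-in-$m$ bound on $\sum_{i,j}|g(y_j-x_i)-g(y_j)|$ does \emph{not} follow from the stated hypotheses alone (try $g(x)=-\|x\|^{2}$, where the difference grows like $\|v_m\|$). But no additional regularity is required. The fix is to postpone applying~(i): keep the factor $e^{n\beta\sum_j(g(z_j)-g(y_j))}$ through the second Campbell manipulation rather than bounding it beforehand. After using $\h(\bv x_n,\gamma\cup\bv y_n)=\h(\bv x_n,\gamma)+\sum_{i,j}\bigl(g(x_i-y_j)-g(y_j)\bigr)$, the two exponential factors multiply to
\[
e^{\,n\beta\sum_j(g(z_j)-g(y_j))}\cdot e^{-\beta\sum_{i,j}(g(x_i-y_j)-g(y_j))}
\;=\; e^{\,n\beta\sum_j g(z_j)\;-\;\beta\sum_{i,j} g(x_i-y_j)},
\]
because $\sum_{i,j}g(y_j)=n\sum_j g(y_j)$ cancels exactly. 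This combined factor is bounded below, uniformly over $\bv x_n\in K_0^n$, $\bv y_n\in K_m^n$, $\bv z_n\in K_1^n$, by $\exp\bigl(n^2\beta\inf_{K_1}g - n^2\beta\sup_{K_0-K_m}g\bigr)$, and $\sup_{K_0-K_m}g\to-\infty$ since $K_0-K_m$ is compact and escapes to infinity. So the combined factor diverges and your Ces\`aro/Fatou argument concludes as written; for the $m_0$-step the reverse inequality is immediate since $m_0$ is fixed and $g$ is continuous (hence bounded) on the relevant compacts.
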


In particular, this results applies to the logarithmic potential $g(x)=-\log\|x\|$ on $\R^d$ for any $d\geq 1$, including the $d=2$ Coulomb interaction. The question of identifying an appropriate class $\mathscr C$ for this setting will be investigated in another work.  Note that Theorem~\ref{DLRrigidityGeneral} does not cover, however, the Coulomb interaction $g(x)=\|x\|^{-(d-2)}$ in dimension $d\geq 3$ or, more generally, the Riesz interactions $g(x)=\|x\|^{-s}$ for any $s\in\R$.

\bibliographystyle{plainnat}

\end{document}